\numberwithin{equation}{section}
\newtheorem{theorem}{Theorem}
\newtheorem{proposition}{Proposition}
\newtheorem{lemma}[theorem]{Lemma}
\newdefinition{remark}{Remark}
\newdefinition{exam}{Example}
\numberwithin{theorem}{section}
\journal{Journal of Computational Physics}
\begin{document}

\begin{frontmatter}
\title{Adaptive Hyperbolic-cross-space Mapped Jacobi Method on Unbounded Domains with Applications to Solving Multidimensional Spatiotemporal Integrodifferential Equations}

\author[label1]{Yunhong Deng}
\affiliation[label1]{
    organization={School of Mathematics},
    addressline={University of Minnesota Twin Cities}, 
    city={Minneapolis},
    state={MN},
    postcode={55455},
    country={USA}}

\author[label2]{Sihong Shao}
\affiliation[label2]{
    organization={CAPT, LMAM and School of Mathematical Sciences},
    addressline={Peking University}, 
    city={Beijing},
    postcode={100871}, 
    country={CHINA}}

\author[label3]{Alex Mogilner}
\affiliation[label3]{
    organization={Courant Institute of Mathematical Science},
    addressline={New York University}, 
    city={New York},
    postcode={10012}, 
    state={NY},
    country={USA}}

\author[label3]{Mingtao Xia\corref{label4}}\ead{xiamingtao@nyu.edu}\cortext[label4]{Corresponding author.}

\begin{abstract}
    In this paper, we develop a new adaptive hyperbolic-cross-space mapped Jacobi (AHMJ) method for solving multidimensional spatiotemporal integrodifferential equations in unbounded domains. By devising adaptive techniques for sparse mapped Jacobi spectral expansions defined in a hyperbolic cross space, our proposed AHMJ method can efficiently solve various spatiotemporal integrodifferential equations such as the anomalous diffusion model with reduced numbers of basis functions. Our analysis of the AHMJ method gives a uniform upper error bound for solving a class of spatiotemporal integrodifferential equations, leading to effective error control.
\end{abstract}

\begin{keyword}
Spectral method \sep Mapped Jacobi functions \sep Hyperbolic cross space \sep Numerical analysis \sep Spatiotemporal integrodifferential equations
%% PACS codes here, in the form: \PACS code \sep code
%\MSC 65M70 \sep 65J15 \sep 65R20 \sep 65M15
\end{keyword}
\end{frontmatter}

\section{Introduction}
There have been wide applications of unbounded-domain spatiotemporal integrodifferential to describe the evolution of physical or biological quantities in different physical or biophysical models. As an example, 2D or 3D unbounded-domain anomalous diffusion equations incorporating fractional Laplacian operators are used in material science \cite{Baeumer2008, Andrade2017, dePablo2011}. As another example, unbounded-domain multidimensional aggregation-diffusion equations,  which include a convolutional term to model the nonlocal interaction among individuals, are used to model swarming behavior and chemotaxis \cite{Mogilner1996, Mogilner1999, Carrillo2019, Carrillo2023}.

Spatiotemporal integrodifferential equations in unbounded domains present computational challenges for many existing methods. For instance, mesh-based approaches like finite difference and finite element methods struggle with unbounded domains when applied directly \cite{Griebel2007, DElia2020}. Truncating the unbounded domain into a bounded domain is necessary for using those mesh-based methods. Yet, domain truncation requires devising artificial boundary conditions \cite{Han2000, Sun2020, Zhang2017}, which could be intricate to formulate.

Spectral methods could be an effective approach for solving unbounded-domain problems \cite{Shen2011, Shen2012, Shen2013, Sheng2019, Tang2018, Tang2019, Xia2020, Xia2020(1)}. Some basis functions, such as the Hermite functions and Laguerre functions, are inherently defined in unbounded domains. Thus, with basis functions defined in unbounded domains, spectral methods can be directly applied to solve unbounded-domain spatiotemporal equations. Recently, novel adaptive techniques for spectral methods \cite{Xia2020, Xia2020(1), Xia2022, Xia2023} have been proposed to improve the efficiency of using spectral methods to solve unbounded-domain spatiotemporal equations. By monitoring a \textbf{frequency indicator} and an \textbf{exterior-error indicator}, the adaptive spectral method can automatically adjust the decaying rate, the displacement, and the expansion order of the spectral expansion to accurately capture the dynamic behaviors of solutions in the unbounded domain.

When using spectral methods to solve multidimensional spatiotemporal equations, the ``curse of dimensionality" arises as the number of basis functions needed could grow exponentially with dimensionality \cite{Griebel2007, Shen2010, Shen2010(1), Shen2012, Luo2013}. A sparse spectral method based on a hyperbolic cross space \cite{Griebel2007, Shen2010, Luo2013} has been proposed to effectively reduce the number of basis functions needed when approximating a multidimensional function. The hyperbolic-cross-space spectral methods have yielded good results in solving high-dimensional elliptic equations \cite{Shen2010, Shen2010(1), Shen2012} and high-dimensional parabolic equations \cite{Griebel2007, Luo2013}. Yet, when solving spatiotemporal equations, the behavior of solutions to those unbounded-domain spatiotemporal equations may evolve over time and require adaptive adjustment of the basis functions \cite{Xia2020, Xia2020(1)}. Previous adaptive Hermite methods \cite{Xia2020, Xia2020(1), Xia2023} have primarily focused on using a dense spectral expansion without any dimension reduction techniques. However, compared to sparse spectral methods with spectral expansion in a proper hyperbolic cross space, full-tensor-product Hermite spectral expansions use much more basis functions without substantially improving accuracy and could thus be computationally ineffective.

Furthermore, solutions to certain unbounded-domain multidimensional spatiotemporal equations such as the anomalous diffusion equation \cite{Baeumer2008, Andrade2017, dePablo2011} and the Patlak-Keller-Segel equation \cite{Velzquez2002} decay algebraically at infinity. Solving those equations requires using basis functions that can characterize algebraic decay at infinity such as the mapped Jacobi functions rather than the exponentially decaying Hermite functions \cite{Shen2013, Tang2019, Sheng2019}. Yet, to our knowledge, there has been little research on how to develop adaptive sparse spectral methods to effectively solve unbounded-domain spatiotemporal equations with algebraically decaying basis functions.

In this work, we develop an adaptive hyperbolic-cross-space mapped Jacobi (AHMJ) method to efficiently solve multidimensional spatiotemporal integrodifferential equations in unbounded domains. Our main contributions are summarized as follows: i) we devise adaptive hyperbolic-cross-space techniques for properly adjusting basis functions over time in a spectral expansion defined in \textbf{hyperbolic cross spaces} \cite{Griebel2007, Shen2010, Luo2013} for solving unbounded-domain multidimensional spatiotemporal equations, ii) we extend the adaptive spectral techniques to \textbf{mapped Jacobi} spectral expansions \cite{Shen2012, Shen2013, Sheng2019, Tang2019}, and iii) we carry out an analysis on the error bound of applying the proposed AHMJ method for solving integrodifferential spatiotemporal equations.

We study the following general nonlinear spatiotemporal equation of the weak form
\begin{equation}\label{eq:2-2}
    \begin{aligned}
        &\big(\partial_{t}u,v\big) + a\big(u, v; t\big) = \big(f(u; t), v\big),~(\boldsymbol{x}, t) \in \mathbb{R}^{d}\times [0, T],\\
        &\quad\quad\big(u(\boldsymbol{x}, 0), \Tilde{v}\big) = \big(u_{0}(\boldsymbol{x}), \Tilde{v}\big),\,\,
        \forall (v, \Tilde{v}) \in L^{2}\big([0, T]; H^{1}(\mathbb{R}^{d})\big) \times H^{1}(\mathbb{R}^{d}).
    \end{aligned}
\end{equation}
Here, $H^{1}(\mathbb{R}^{d})$ is the Sobolev space build on $L^{2}(\mathbb{R}^{d})$ \citep[Chapter 6]{Arendt2023}, and $\big(\cdot, \cdot\big)$ denotes the $L^{2}$ inner product \textit{w.r.t.} the spatial variable $\boldsymbol{x}$ (we also use $\big(\cdot, \cdot\big)$ to denote the duality \cite{Dautray1992} between $H^{1}(\mathbb{R}^{d})$ and $H^{-1}(\mathbb{R}^{d})$):
\begin{equation}
    \big(u, v\big) \coloneqq \int_{\mathbb{R}^{d}}u(\boldsymbol{x}) v(\boldsymbol{x})~ \text{d}\boldsymbol{x}.
\end{equation}
$a(u, v; t)$ is a bilinear form, $f(u; t)$ is a nonlinear operator, and $u_{0}(\boldsymbol{x})$ denotes the initial value. We shall prove the following theorem on the error bound for applying the AHMJ method.

\begin{theorem}\label{th:1-1} 
We assume that $a\big(u, v; t\big)$ is a symmetric bilinear form satisfying the following continuous and coercive conditions: there exist two constants $C_{0}, c_{0} > 0$ such that
\begin{equation}\label{eq:1-4}
\begin{aligned}
    a\big(u, v; t\big) \le C_{0}\|u\|_{H^{1}} \|v\|_{H^{1}}, \,\, c_{0}\|u\|_{H^{1}}^{2} \le a\big(u, u; t\big).
\end{aligned}
\end{equation}
Furthermore, we assume that the nonlinear term $f(u; t)$ satisfies the Lipschitz condition: there exists a constant $L > 0$ such that
\begin{equation}\label{eq:1-5}
    \forall u, v, \phi \in L^{2}(\mathbb{R}^{d}) \Longrightarrow \big(f(u; t) - f(v; t), \phi\big) \le L\|u - v\|_{L^{2}} \|\phi\|_{L^{2}}.
\end{equation}
Then, the $L^{2}$ error of using the AHMJ method to solve the model problem Eq.~\eqref{eq:2-2} can be bounded by the sum of three separate error bounds:
\begin{equation}\label{error_estimate}
    \big\|u(\cdot, T) - U_{N, \gamma}^{\boldsymbol{\beta}, \boldsymbol{x}_{0}}(\cdot, T)\big\|_{L^{2}} \le E_{\mathcal{J}}(T) + E_{RK}(T) + E_{A}(T),
\end{equation}
where $U_{N, \gamma}^{\boldsymbol{\beta}, \boldsymbol{x}_{0}}(\boldsymbol{x}, T)$ denotes the numerical solution of the AHMJ method and an implicit Runge-Kutta scheme \cite{Southworth2022, Xia2022, Ern2021}. $E_{\mathcal{J}}$, $E_{RK}$, and $E_{A}$ denote the mapped Jacobi approximation error bound, the IRK time discretization scheme error bound, and the adaptive technique error bound, respectively.
\end{theorem}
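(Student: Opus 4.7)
The plan is to decompose the total error by the triangle inequality via two intermediate auxiliary solutions. Let $u_N^{\boldsymbol{\beta},\boldsymbol{x}_0}(\cdot,t)$ denote the semi-discrete mapped Jacobi Galerkin approximation obtained by projecting Eq.~\eqref{eq:2-2} onto the fixed hyperbolic-cross mapped-Jacobi subspace determined by the final-time parameters $(N,\gamma,\boldsymbol{\beta},\boldsymbol{x}_0)$, and let $\tilde U_{N,\gamma}^{\boldsymbol{\beta},\boldsymbol{x}_0}(\cdot,t)$ denote the fully-discrete IRK solution in that same fixed subspace, i.e., without any adaptive parameter changes. Writing
\begin{equation*}
u - U_{N,\gamma}^{\boldsymbol{\beta},\boldsymbol{x}_0} \;=\; \bigl(u - u_N^{\boldsymbol{\beta},\boldsymbol{x}_0}\bigr) \;+\; \bigl(u_N^{\boldsymbol{\beta},\boldsymbol{x}_0} - \tilde U_{N,\gamma}^{\boldsymbol{\beta},\boldsymbol{x}_0}\bigr) \;+\; \bigl(\tilde U_{N,\gamma}^{\boldsymbol{\beta},\boldsymbol{x}_0} - U_{N,\gamma}^{\boldsymbol{\beta},\boldsymbol{x}_0}\bigr)
\end{equation*}
identifies the three summands with $E_{\mathcal{J}}$, $E_{RK}$, and $E_A$ respectively, so it suffices to bound each piece separately.

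For $E_{\mathcal{J}}$, I would subtract the Galerkin equation from the weak formulation, test the resulting error equation against an admissible function in the finite-dimensional subspace, invoke the coercivity and continuity assumptions~\eqref{eq:1-4} on $a(\cdot,\cdot;t)$ together with the Lipschitz bound~\eqref{eq:1-5} on $f$ to dominate the cross and nonlinear terms via Cauchy--Schwarz and Young's inequalities, and then apply Gronwall's inequality. This reduces the bound to the best-approximation error of $u$ in the hyperbolic-cross mapped-Jacobi subspace, integrated over $[0,T]$, which gives $E_{\mathcal{J}}(T)$ in terms of the smoothness of $u$ and the truncation index $N$.

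For $E_{RK}$, the Galerkin system in the fixed subspace is a finite-dimensional ODE with a Lipschitz right-hand side inherited from the Lipschitz property of $f$ and the boundedness of $a$, so the standard convergence theory of the chosen implicit Runge-Kutta scheme \cite{Southworth2022, Ern2021} applies directly. A local truncation error analysis followed by a discrete Gronwall argument yields a bound of the form $E_{RK}(T)=\mathcal{O}(\Delta t^{p})$, where $p$ is the order of the IRK method.

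For $E_A$, I would track the adaptive updates of the tuple $(N,\gamma,\boldsymbol{\beta},\boldsymbol{x}_0)$ at a sequence of adaptation times $0 = t_0 < t_1 < \cdots < t_M = T$. At each $t_k$ the current numerical solution is re-expanded in the new hyperbolic-cross mapped-Jacobi basis, and the resulting projection error propagates forward under the IRK flow. The bound follows by summing the re-projection errors across all adaptation steps and applying a discrete Gronwall-type stability argument on each subinterval, using again the coercivity of $a$ and the Lipschitz property of $f$. The main obstacle is precisely this step: the weight function and the underlying change of variables differ between successive mapped-Jacobi bases, so bounding the $L^2$ gap between the pre- and post-adaptation numerical solutions requires careful comparison of the two weighted inner products and a uniform control of the propagation constants across all adaptation intervals. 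Once that is achieved, combining the three bounds yields Eq.~\eqref{error_estimate}.
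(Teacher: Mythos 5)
Your global three-way decomposition does not match how this bound is actually obtained, and the third piece of your splitting cannot be closed as you describe. The paper proves the theorem by a \emph{per-time-step} recursion: on each interval $[t_\ell,t_{\ell+1}]$ one compares the exact solution with a continuous-time Galerkin solution posed in the \emph{current} subspace $V_{N_\ell,\gamma}^{\boldsymbol{\beta}_\ell,\boldsymbol{x}_{0\ell}}$ and started from the current numerical data, obtaining a one-step estimate of the form $E(t_{\ell+1})\le \exp\big((L-c_0)\Delta t\big)E(t_\ell)+e_{\mathcal J}+e_{RK}+e_A$ (Eq.~\eqref{eq:4-13}), and then iterates. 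The contraction factor $\exp((L-c_0)\Delta t)$, which comes from playing coercivity against the Lipschitz constant in a perturbation argument for inexact initial data at $t_\ell$, is what lets the local errors sum into $E_{\mathcal J}+E_{RK}+E_A$ with the weights $\exp((L-c_0)(T-t_\ell))$. Your decomposition instead fixes the \emph{final-time} subspace for the first two terms. This already misstates $E_{\mathcal J}$: the paper's $E_{\mathcal J}$ is built from $\|u-\pi_{N_\ell,\gamma}^{\boldsymbol{\beta}_\ell,\boldsymbol{x}_{0\ell}}u\|_{X(t_{\ell-1},t_\ell)}$, the projection error onto the subspace in use on each subinterval, not the best approximation in the terminal subspace over all of $[0,T]$ — the whole point of adaptivity is that the terminal parameters may be a poor choice at earlier times, so your first term need not be small even when the adaptive method succeeds.

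The more serious gap is your third term $\tilde U_{N,\gamma}^{\boldsymbol{\beta},\boldsymbol{x}_0}-U_{N,\gamma}^{\boldsymbol{\beta},\boldsymbol{x}_0}$. These two fully discrete solutions evolve under \emph{different} Galerkin-projected operators between adaptation times (different $\boldsymbol{\beta}$, $\boldsymbol{x}_0$, $N$ give different matrices $A_N^{\boldsymbol{\beta}}$ and different nonlinear projections), so their difference is not a sum of re-projection errors propagated forward; it also contains the accumulated divergence of two distinct discrete dynamical systems, and bounding that forces you back into exactly the local, interval-by-interval comparison you were trying to avoid. In the paper, $E_A$ is instead a sum of \emph{local} adaptation errors $e_m+e_s+e_c$ (Eq.~\eqref{adaerror}) — moving, scaling, and coarsening — each estimated a posteriori via the inverse inequalities of Lemma~\ref{th:5-1} applied to the numerical solution itself, and then propagated through the same one-step contraction. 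Your outline is also missing the specific machinery the paper needs for the two quantitative pieces: the space-time inf-sup condition for the bilinear form $B(u,\boldsymbol v;s)$ that yields quasi-optimality of the Galerkin solution in the $X$-norm (Lemma~\ref{th:2-1}), and the algebraic-stability/stage-order analysis of the IRK scheme under the step-size restriction $\Delta t\le c_0/(4\sqrt2 L(C_0+L)C_{ab})$, which is what produces the $\Delta t^{q+1}\|\partial_t^{(q+1)}\tilde U\|_{X}$ form of $E_{RK}$ rather than a generic $\mathcal O(\Delta t^p)$.
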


In Section \ref{sec:3}, we shall prove that the mapped Jacobi method error bound $E_{\mathcal{J}}$ in Eq.~\eqref{error_estimate} is determined by the hyperbolic-cross-space mapped Jacobi approximation error $u - \pi_{N, \gamma}^{\boldsymbol{\beta}, \boldsymbol{x}_{0}}u$; $E_{RK}$ and $E_{A}$ are determined by the implementation of the time discretization scheme and the implementation of adaptive techniques, respectively. Given a smooth function, $E_{RK}$ can be maintained small if some appropriate high-order time discretization schemes are implemented. Therefore, we can control $E_{\mathcal{J}}$ and $E_{A}$ by choosing an appropriate hyperbolic cross space and properly implementing the adaptive techniques for the sparse spectral expansion approximation. Theorem~\ref{th:1-1} indicates the error in implementing our AHMJ method to solve multidimensional spatiotemporal integrodifferential equations can be well controlled.

The rest of this paper is organized as follows. Section \ref{sec:2} analyzes the model problem Eq. \eqref{eq:2-2} and gives the numerical scheme for applying the AHMJ method to numerically solve it. Section \ref{sec:3} carries out a numerical analysis of the model problem Eq.~\eqref{eq:2-2} and proves Theorem~\ref{th:1-1}. Section \ref{sec:4} presents the AHMJ method and numerical results. Section \ref{sec:5} concludes our paper. A summary of the main variables and notations is given in Table \ref{table:note}.

\begin{table}[h!]
    \centering
    \caption{\textnormal{Definitions of the main variables and notations used in this study.}}\label{table:note}
    \begin{tabular}{c|l}
    \toprule
    Symbol & Definition \\
    \midrule
    $\mathcal{J}_{n}^{\beta, x_{0}}(x)$ & $\mathcal{J}_{n}^{\beta, x_{0}}(x) = \mathcal{J}_{n}\big(\beta(x - x_{0})\big)$ is the $n^{\text{th}}$ order\\
    ~&~mapped Jacobi function \cite{Shen2013, Tang2019}\\
    \midrule
    $\boldsymbol{x}$ & $\boldsymbol{x} \coloneqq (x_{1}, \cdots, x_{d}) \in \mathbb{R}^{d}$ is the $d$-dimensional spatial variable\\
    \midrule
    $\boldsymbol{n}$ &  $\boldsymbol{n}\coloneqq(n_1,\cdots,n_d)$ is the $d$-dimensional index\\
    \midrule
    $\boldsymbol{\beta}$ & $\boldsymbol{\beta}\coloneqq(\beta_1,\cdots,\beta_d)$ is the $d$-dimensional scaling factor\\
    ~&~$\beta_i$ is the scaling factor in the $i^{\text{th}}$ dimension\\
    \midrule
    $\boldsymbol{x}_{0}$ & $\boldsymbol{x}_0\coloneqq({x_0}_1,\cdots,{x_0}_d)$ is the $d$-dimensional displacement of the\\ 
    ~&~basis functions. ${x_0}_i$ is the displacement in the $i^{\text{th}}$ dimension\\
    \midrule
    $\mathcal{J}_{\boldsymbol{n}}^{\boldsymbol{\beta}, \boldsymbol{x}_{0}}(\boldsymbol{x})$ & $\mathcal{J}_{\boldsymbol{n}}^{\boldsymbol{\beta}, \boldsymbol{x}_{0}}(\boldsymbol{x})\coloneqq \prod_{i = 1}^{d}\mathcal{J}_{n_{i}}^{\beta_{i}, {x_{0}}_{i}}(x_{i})$ is the tensorial\\
    ~&~mapped Jacobi function; here $\boldsymbol{n}\coloneqq(n_1,...,n_d)$\\
    \midrule
    $\Upsilon_{N, \gamma}$ & $\Upsilon_{N, \gamma} \coloneqq \big\{\boldsymbol{n} \in \mathbb{N}^{d} : |\boldsymbol{n}|_{\text{mix}} |\boldsymbol{n}|_{\infty}^{-\gamma} \le N^{1 - \gamma}\big\}$ is the hyperbolic\\
    ~&~cross index set \cite{Shen2010, Luo2013}, where $|\boldsymbol{n}|_{\text{mix}} \coloneqq \prod_{i = 1}^{d}\max\{1, n_{i}\}$\\
    \midrule
    $V_{N, \gamma}^{\boldsymbol{\beta}, \boldsymbol{x}_{0}}$ & $V_{N,\gamma}^{\boldsymbol{\beta}, \boldsymbol{x}_{0}} \coloneqq \operatorname{span}_{\boldsymbol{n} \in \Upsilon_{N,\gamma}}\left\{\mathcal{J}_{\boldsymbol{n}}^{\boldsymbol{\beta}, \boldsymbol{x}_{0}}(\boldsymbol{x})\right\}$ is the hyperbolic cross\\
    ~&~ mapped Jacobi approximation space\\
    \midrule
    $\pi_{N, \gamma}^{\boldsymbol{\beta}, \boldsymbol{x}_{0}}$ & the projection operator $\pi_{N, \gamma}^{\boldsymbol{\beta}, \boldsymbol{x}_{0}} : L^{2}(\mathbb{R}^{d}) \to V_{N, \gamma}^{\boldsymbol{\beta}, \boldsymbol{x}_{0}}$\\
    ~&~ such that $\big(\pi_{N, \gamma}^{\boldsymbol{\beta}, \boldsymbol{x}_{0}}u - u, \pi_{N, \gamma}^{\boldsymbol{\beta}, \boldsymbol{x}_{0}}u\big) = 0,~\forall u \in L^{2}(\mathbb{R}^{d})$\\
    \midrule
    $L^{2}([a, b]; V)$ & the Bochner space $\{u : [a, b] \to V;~ \int_{a}^{b}\|u(t)\|_{V}^{2}dt < \infty\}$\\
    \midrule
    $X(t_{0}, t_{1})$ & the Sobolev-Bochner space \citep[page 472]{Dautray1992}\\
    ~&~$\left\{u \in L^{2}\big([t_{0}, t_{1}]; H^{1}(\mathbb{R}^{d})\big) : \partial_{t}u \in L^{2}\big([t_{0}, t_{1}]; H^{-1}(\mathbb{R}^{d})\big)\right\}$\\
    \midrule
    $Y(t_{0}, t_{1})$ & the Sobolev-Bochner space $L^{2}\big([t_{0}, t_{1}]; H^{1}(\mathbb{R}^{d})\big) \times H^{1}(\mathbb{R}^{d})$\\
    \bottomrule
\end{tabular}
\end{table}

\section{Model problem analysis and the numerical scheme}\label{sec:2}
In this section, we prove the existence and uniqueness of a solution to the model problem Eq.~\eqref{eq:2-2}. Then, we introduce the sparse hyperbolic-cross-space mapped Jacobi approximation and present the AHMJ method to solve the model problem Eq. \eqref{eq:2-2}.

\subsection{Analysis on the model problem Eq. \eqref{eq:2-2}}
The following theorem establishes the existence and the uniqueness of a weak solution $u(\boldsymbol{x}, t) \in X(0, T)$ to the model problem Eq.~\eqref{eq:2-2}, where $X(t_{0}, t_{1})$ is the Bochner–Sobolev space defined in Table~\ref{table:note}. The norm of $u(\boldsymbol{x}, t)$ in the Bochner-Sobolev $X(t_{0}, t_{1})$ space is defined by
\begin{equation}\label{eq:3-18}
    \|u\|_{X(t_{0}, t_{1})}^{2} \coloneqq \int_{t_{0}}^{t_{1}}\big(\|\partial_{t}u\|_{H^{-1}}^{2} + \|u\|_{H^{1}}^{2}\big) \text{d}t + \|u(\cdot, t_{0})\|^{2}_{L^{2}}.
\end{equation}
\begin{theorem}\label{th:2-3}
Assume that the continuous and coercive conditions in Eqs.~\eqref{eq:1-4} and \eqref{eq:1-5} are satisfied. If we additionally assume that $L < c_{0}$, then there exists a unique solution $u(\boldsymbol{x}, t) \in X(0, T)$ to the model problem Eq. \eqref{eq:2-2}.
\end{theorem}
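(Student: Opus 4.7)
The plan is to treat Eq.~\eqref{eq:2-2} by a Picard-type contraction argument that reduces the nonlinear problem to a sequence of linear parabolic problems. For any fixed $w \in L^{2}([0,T]; L^{2}(\mathbb{R}^{d}))$, I first consider the linear auxiliary problem
\begin{equation*}
(\partial_{t}u, v) + a(u, v; t) = (f(w; t), v), \quad u(\cdot, 0) = u_{0}, \quad \forall v \in L^{2}\big([0, T]; H^{1}(\mathbb{R}^{d})\big).
\end{equation*}
Since $a(\cdot,\cdot;t)$ is continuous and coercive on $H^{1}(\mathbb{R}^{d})$ by Eq.~\eqref{eq:1-4}, and the Lipschitz condition Eq.~\eqref{eq:1-5} (together with $f(0;\cdot) \in L^{2}([0,T]; L^{2}(\mathbb{R}^{d}))$, which I take as part of the data) ensures $f(w;\cdot) \in L^{2}([0,T]; L^{2}(\mathbb{R}^{d})) \hookrightarrow L^{2}([0,T]; H^{-1}(\mathbb{R}^{d}))$, the Lions--Magenes theorem on the Gelfand triple $H^{1} \hookrightarrow L^{2} \hookrightarrow H^{-1}$ (equivalently a Faedo--Galerkin approximation) yields a unique solution $u \in X(0,T)$. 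I denote this solution map by $\Phi : w \mapsto u$.

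The second step is to show that $\Phi : L^{2}([0,T]; L^{2}(\mathbb{R}^{d})) \to L^{2}([0,T]; L^{2}(\mathbb{R}^{d}))$ is a strict contraction when $L < c_{0}$. Writing $u_{i} = \Phi(w_{i})$ and $\delta u = u_{1} - u_{2}$, the difference solves the linear problem with zero initial data and source $f(w_{1};t) - f(w_{2};t)$. Testing against $v = \delta u$, invoking coercivity from Eq.~\eqref{eq:1-4}, the Lipschitz bound from Eq.~\eqref{eq:1-5}, the trivial embedding $\|\delta u\|_{L^{2}} \le \|\delta u\|_{H^{1}}$, and Young's inequality, and integrating in time yields an estimate of the form
\begin{equation*}
(2c_{0} - L)\int_{0}^{T}\|\delta u\|_{L^{2}}^{2}\,\text{d}t \le L\int_{0}^{T}\|w_{1} - w_{2}\|_{L^{2}}^{2}\,\text{d}t.
\end{equation*}
The hypothesis $L < c_{0}$ forces $L/(2c_{0} - L) < 1$, so $\Phi$ is a strict contraction.

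The third step is to invoke the Banach fixed-point theorem to produce a unique fixed point $u \in L^{2}([0,T]; L^{2}(\mathbb{R}^{d}))$. By the linear regularity in Step~1, this fixed point in fact lies in $X(0,T)$ and solves Eq.~\eqref{eq:2-2}; uniqueness in $X(0,T)$ follows because any weak solution there is automatically a fixed point of the same contraction.

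The principal obstacle is the first step: verifying that Lions' theorem applies with constants uniform in $t$ so that $\Phi$ maps into the full space $X(0,T)$, including the derivative regularity $\partial_{t}u \in L^{2}([0,T]; H^{-1}(\mathbb{R}^{d}))$ entering the $X(0,T)$ norm of Eq.~\eqref{eq:3-18}. Once the linear solvability is settled, the contraction estimate in Step~2 is a short energy calculation that cleanly exploits the spectral gap $c_{0} - L > 0$, and the fixed-point step is immediate.
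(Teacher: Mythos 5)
Your proposal is correct and follows essentially the same route as the paper: both fix $w$, solve the linearized parabolic problem with source $f(w;t)$ via the standard Lions/Dautray--Lions theory to define a solution map, and then use coercivity, the Lipschitz condition, and $L < c_{0}$ to obtain a Banach contraction. The only differences are cosmetic: you contract in $L^{2}([0,T];L^{2}(\mathbb{R}^{d}))$ with ratio $L/(2c_{0}-L)$ and recover $\partial_{t}u \in L^{2}([0,T];H^{-1}(\mathbb{R}^{d}))$ by bootstrapping through the fixed-point identity $u=\Phi(u)$, whereas the paper contracts in $L^{2}([0,T];H^{1}(\mathbb{R}^{d}))$ with ratio $L/c_{0}$ and establishes the time-derivative regularity of the limit by a separate Cauchy-sequence estimate for $\{\partial_{t}u^{n}\}$; both variants are valid and both rest on the same mild implicit assumption that $f(w;\cdot)$ lies in $L^{2}([0,T];L^{2}(\mathbb{R}^{d}))$.
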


The proof of Theorem \ref{th:2-3} is given in \ref{ap:2}. Actually, a wide range of spatiotemporal integrodifferential equations can be cast into the model problem Eq.~\eqref{eq:2-2}. As an example, consider an $a\big(u, v; t\big)$ containing convolutional operator:
\begin{equation}\label{eq:2-22}
    a\big(u, v; t\big) \coloneqq \big(G \ast \nabla u, \nabla v\big) + \varepsilon\big((u, v) + (\nabla u, \nabla v)\big),
\end{equation}
where $(G\ast u)(\boldsymbol{x}) \coloneqq \int_{\mathbb{R}^{d}} G(\boldsymbol{x} - \boldsymbol{y}) u(\boldsymbol{y})\text{d}\boldsymbol{y}$ is a spatial convolutional operator. The following proposition shows under which assumptions on the convolutional kernel $G$ does $a(u, v;t)$ in Eq.~\eqref{eq:2-22} satisfy the conditions in Theorem~\ref{th:1-1}.

\begin{proposition}\label{prop3}
Assume that the convolution kernel $G(\boldsymbol{x})$ in Eq.~\eqref{eq:2-22} satisfies: 
\begin{enumerate}
    \item $G(\boldsymbol{x}) \in L^{1}(\mathbb{R}^{d})$
    \item the Fourier transform $\mathscr{F}(G)(\boldsymbol{x}) \geq 0,\,\, \forall \boldsymbol{x}\in\mathbb{R}^d$
\end{enumerate}
Then the bilinear form $a(u, v; t)$ defined in Eq.~\eqref{eq:2-22} satisfies the continuous condition and coercive conditions in Theorem \ref{th:1-1}.
\end{proposition}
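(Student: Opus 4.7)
The plan is to verify the two conditions in \eqref{eq:1-4} separately by treating the convolutional term via Young's inequality for continuity and via the Fourier transform for coercivity; the $\varepsilon$-terms are standard and handle both the lower-order part of continuity and the entire coercivity lower bound.

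For the continuity bound, I would first split $a(u,v;t)$ into the convolutional part and the $\varepsilon$-part. Young's convolution inequality gives $\|G\ast \nabla u\|_{L^2}\le \|G\|_{L^1}\|\nabla u\|_{L^2}$ (applied componentwise, since $G\in L^1$ and $\nabla u\in L^2$), and then Cauchy--Schwarz yields
\begin{equation*}
    \bigl|\bigl(G\ast\nabla u,\nabla v\bigr)\bigr|
    \le \|G\|_{L^1}\,\|\nabla u\|_{L^2}\,\|\nabla v\|_{L^2}
    \le \|G\|_{L^1}\,\|u\|_{H^1}\,\|v\|_{H^1}.
\end{equation*}
The lower-order part is bounded directly by $\varepsilon(\|u\|_{L^2}\|v\|_{L^2}+\|\nabla u\|_{L^2}\|\nabla v\|_{L^2})\le \varepsilon\|u\|_{H^1}\|v\|_{H^1}$, so continuity holds with $C_0=\|G\|_{L^1}+\varepsilon$.

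For the coercivity bound, the key observation is that the convolutional term is non-negative. I would use the convolution theorem together with Parseval's identity to rewrite
\begin{equation*}
    \bigl(G\ast\nabla u,\nabla u\bigr)
    =\sum_{i=1}^d\int_{\mathbb{R}^d}\mathscr{F}(G)(\boldsymbol{\xi})\,\bigl|\mathscr{F}(\partial_i u)(\boldsymbol{\xi})\bigr|^2\,\text{d}\boldsymbol{\xi},
\end{equation*}
which is well-defined since $G\in L^1$ forces $\mathscr{F}(G)\in L^\infty$ and $\partial_i u\in L^2$. The hypothesis $\mathscr{F}(G)\ge 0$ then makes the integrand pointwise non-negative, so $(G\ast\nabla u,\nabla u)\ge 0$. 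Consequently
\begin{equation*}
    a(u,u;t)\;\ge\;\varepsilon\bigl(\|u\|_{L^2}^2+\|\nabla u\|_{L^2}^2\bigr)
    \;=\;\varepsilon\|u\|_{H^1}^2,
\end{equation*}
which is the coercivity condition with $c_0=\varepsilon$.

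The main (and only mild) obstacle is justifying the Fourier-side identity: one should verify that the convolution theorem applies in the regime $G\in L^1$, $\nabla u\in L^2$ so that $G\ast\nabla u\in L^2$ and $\mathscr{F}(G\ast\nabla u)=\mathscr{F}(G)\mathscr{F}(\nabla u)$ almost everywhere, followed by Parseval. This is standard once one invokes density of Schwartz functions in $H^1(\mathbb{R}^d)$ and the boundedness of each operation in the appropriate norms. All remaining steps are routine inequalities, so the proof is essentially a two-line Fourier computation sandwiched between Young's inequality for the upper bound and positivity of $\mathscr{F}(G)$ for the lower bound.
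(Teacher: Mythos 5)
Your proof is correct and follows essentially the same route as the paper's: Young's convolution inequality plus Cauchy--Schwarz for the continuity bound, and Plancherel together with the pointwise nonnegativity of $\mathscr{F}(G)$ to show the convolutional term is nonnegative, yielding coercivity with constant $\varepsilon$. Your slightly sharper constant $C_0=\|G\|_{L^1}+\varepsilon$ and your remark on justifying the convolution theorem in the $L^1\ast L^2$ setting are fine refinements but do not change the argument.
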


The proof of Proposition~\ref{prop3} is given in \ref{ap:6}. The assumptions on the convolutional kernel $G$ in Lemma~\ref{prop3} can be met by many commonly used convolutional kernels which are radial symmetric functions, such as the Gaussian potential kernel \cite{Burger2014} and the Morse potential kernel \cite{Carrillo2014}.

\subsection{Hyperbolic-cross-space mapped Jacobi approximation}
Now, we introduce the mapped Jacobi functions \cite{Shen2012, Shen2013, Tang2019, Sheng2019} defined in $\mathbb{R}$. We denote $\{j_{n}^{\alpha_{1}, \alpha_{2}}(\xi)\}_{n = 0}^{\infty}$ to be the set of Jacobi polynomials defined on the interval $(-1, 1)$ with two fixed parameters $\alpha_{1}, \alpha_{2} > -1$ \citep[Chapter 3.2]{Shen2011}. Fixing $\alpha_{1}, \alpha_{2}$, the Jacobi polynomials form a set of orthogonal basis functions \textit{w.r.t.} the weight function $w^{\alpha_{1}, \alpha_{2}} \coloneqq (1 - \xi)^{\alpha_{1}}(1 + \xi)^{\alpha_{2}}$.

Given a one-to-one mapping $h_{\beta, r}(x)$ from $x\in\mathbb{R}$ to $\xi\in(-1, 1)$, we can formulate a novel orthogonal basis in the unbounded domain $\mathbb{R}$, through the images of the Jacobi polynomials under the mapping $\xi \coloneqq h_{\beta, r}(x)$,
\begin{equation}
    \begin{aligned}
        \int_{\mathbb{R}} j_{m}^{\alpha_{1}, \alpha_{2}}(\xi)j_{n}^{\alpha_{1}, \alpha_{2}}(\xi) w^{\alpha_{1}, \alpha_{2}}(\xi) \frac{\text{d}\xi}{\text{d}x} \text{d}x =& \sqrt{\gamma_{m} \gamma_{n}}\delta_{m, n},
    \end{aligned}
\end{equation}
where $\gamma_{n} \coloneqq \int_{-1}^1(j_{n}^{\alpha_{1}, \alpha_{2}})^2w^{\alpha_{1}, \alpha_{2}}\text{d}\xi$. Here, we consider a family of mappings $ h_{\beta, r}(x)$ in \citep[Section 2.2]{Shen2012} defined by
\begin{equation}\label{eq:1-2}
    \frac{\text{d}h_{\beta, r}(x)}{\text{d}x} = \beta \big(1 - h_{\beta, r}^{2}(x)\big)^{1 + r/2} \text{ and } h_{\beta, r}(0) = 0.
\end{equation}
Here, $\beta$ is the scaling factor, and $r \ge 0$ is a non-negative integer. For $r = 0, 1$, $h_{\beta, r}(x)$ can computed explicitly as follows:
\begin{equation}\label{eq:1-3}
    h_{\beta, r}(x) =
    \begin{cases}
        \tanh(\beta x) & \text{logarithmic mapping}~r = 0\\
        \displaystyle\frac{\beta x}{\sqrt{1 + \beta^{2}x^{2}}} &\text{algebraic mapping}~r = 1
    \end{cases}
\end{equation}

Using the mapping $h_{\beta, r}(x)$ defined in Eq. \eqref{eq:1-2}, we define the mapped Jacobi functions on the unbounded domain $\mathbb{R}$ as
\begin{equation}\label{eq:2-13}
    \begin{aligned}
        \mathcal{J}_{n, \alpha_{1}, \alpha_{2}, r}^{\beta, x_{0}}(x) \coloneqq \frac{1}{\sqrt{\gamma_{n}}} j_{n}^{\alpha_{1}, \alpha_{2}}\big(h_{\beta, r}(x - x_{0})\big)\mu_{\alpha_{1}, \alpha_{2}}\big(h_{\beta, r}(x - x_{0})\big),
    \end{aligned}
\end{equation}
where $\mu_{\alpha_{1}, \alpha_{2}}\big(h_{\beta, r}(x)\big)\coloneqq \sqrt{w^{\alpha_{1}, \alpha_{2}}\big(h_{\beta, r}(x)\big) h^{\prime}_{\beta, r}(x)}$ is the modified weight function. This modified weight function $\mu_{\alpha_{1}, \alpha_{2}}(x)$ makes the mapped Jacobi functions $\{\mathcal{J}_{n, \alpha_{1}, \alpha_{2}, r}^{\beta, x_{0}}\}$ a complete and orthogonal basis of the Hilbert space $L^{2}(\mathbb{R})$ \citep[Proposition 2.2]{Shen2013}. For notational simplicity, we omit $\alpha_1, \alpha_2$ for the mapped Jacobi basis functions and the mapping parameter $r$ in the subindex of $\mathcal{J}_{n, \alpha_{1}, \alpha_{2}, r}^{\beta, x_{0}}(x)$, \textit{i.e.}, we use the notation $\mathcal{J}_{n}^{\beta, x_{0}}(x)$ instead.

Detailed theoretical properties of the mapped Jacobi functions can be found in \cite{Shen2012, Shen2013, Sheng2019, Tang2019}. Different from the generalized Hermite functions which decay at an exponential rate of $e^{-\frac{1}{2}|\beta x|^{2}}$ for large $|x|$, the decaying rate of the mapped Jacobi basis functions can be tuned by choosing an appropriate $r$ in the mapping Eq.~\eqref{eq:1-2}. For example, when using the algebraic mapping ($r = 1$ in Eq.~\eqref{eq:1-2}), the mapped Jacobi basis functions decay at a rate of $|\beta x|^{-1}$ \cite{Shen2013} for large 
$|x|$, which are suitable for approximating a function that decays faster than $|x|^{-1}$ for large $|x|$ \cite{Shen2012}.

We shall use sparse mapped Jacobi spectral expansions defined in the hyperbolic cross space $V_{N, \gamma}^{\boldsymbol{\beta}, \boldsymbol{x}_{0}}$ characterized by the hyperbolic cross index set $\Upsilon_{N, \gamma}$ (defined in Table \ref{table:note}). The following two inverse inequalities hold for the mapped Jacobi spectral expansions in the hyperbolic cross space $V_{N,\gamma}^{\boldsymbol{\beta}, \boldsymbol{x}_{0}}$.

\begin{lemma}\label{th:5-1} For all $U_{N, \gamma}^{\boldsymbol{\beta}, \boldsymbol{x}_{0}} \in V_{N,\gamma}^{\boldsymbol{\beta}, \boldsymbol{x}_{0}}$,
\begin{equation}\label{eq:1-1}
    \big\|\partial_{x_{i}} U_{N, \gamma}^{\boldsymbol{\beta}, \boldsymbol{x}_{0}}\big\|_{L^{2}} \le \beta_{i}^{3/2} N_{\alpha, r}^{1/2} \big\|U_{N, \gamma}^{\boldsymbol{\beta},\boldsymbol{x}_{0}}\big\|_{L^{2}},
\end{equation}
If additionally restricting that $r \le 1$ in Eq.~\eqref{eq:1-2}, we have
\begin{equation}\label{eq:1-12}
    \big\|x_{i}\partial_{x_{i}} U_{N, \gamma}^{\boldsymbol{\beta}, \boldsymbol{x}_{0}}\big\|_{L^{2}} \le \beta_{i}^{1/2}N_{\alpha, r}^{1/2} \big\|U_{N, \gamma}^{\boldsymbol{\beta}, \boldsymbol{x}_{0}}\big\|_{L^{2}},
\end{equation}
where $N_{\alpha, r} \coloneqq 2N(N + \alpha_{1} + \alpha_{2} + 1) + 2(1 + \alpha_{1} + \alpha_{2} + r/2)^{2}$.
\end{lemma}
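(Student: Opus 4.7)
The plan is to reduce both inequalities to their one-dimensional analogs for mapped Jacobi spectral expansions and then lift them to the sparse, multidimensional hyperbolic cross setting via Parseval's identity. Since any $U_{N,\gamma}^{\boldsymbol{\beta},\boldsymbol{x}_0}\in V_{N,\gamma}^{\boldsymbol{\beta},\boldsymbol{x}_0}$ can be written as $\sum_{\boldsymbol{n}\in\Upsilon_{N,\gamma}}\hat u_{\boldsymbol n}\mathcal{J}_{\boldsymbol n}^{\boldsymbol{\beta},\boldsymbol{x}_0}(\boldsymbol x)$ and the operator $\partial_{x_i}$ acts only on the $i$-th factor, I would fix all coordinates $j\neq i$, apply Parseval in those directions to the orthonormal mapped Jacobi basis, and be left with a sum of squared $L^2(\mathrm d x_i)$ norms of partial 1D expansions. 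Crucially, the hyperbolic cross condition $|\boldsymbol{n}|_{\text{mix}}|\boldsymbol{n}|_\infty^{-\gamma}\le N^{1-\gamma}$ forces $n_i\le N$ for every $\boldsymbol n\in\Upsilon_{N,\gamma}$, so each 1D slice lives in a univariate mapped Jacobi space of degree at most $N$, and the 1D bound applied slice-by-slice, followed by Parseval again, yields the claimed multidimensional estimate with the same $N_{\alpha,r}^{1/2}$ factor.

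For the 1D step, I would start from the definition $\mathcal{J}_n^{\beta,x_0}(x)=\gamma_n^{-1/2}j_n^{\alpha_1,\alpha_2}(h_{\beta,r}(x-x_0))\mu_{\alpha_1,\alpha_2}(h_{\beta,r}(x-x_0))$ and differentiate. The chain rule produces a factor $h'_{\beta,r}(x-x_0)=\beta(1-h^2)^{1+r/2}$; combining this with the classical Jacobi derivative identity $\tfrac{d}{d\xi}j_n^{\alpha_1,\alpha_2}(\xi)=\tfrac12(n+\alpha_1+\alpha_2+1)j_{n-1}^{\alpha_1+1,\alpha_2+1}(\xi)$ and the derivative of $\mu_{\alpha_1,\alpha_2}$, I would express $\partial_x\mathcal{J}_n^{\beta,x_0}$ as $\beta$ times a short linear combination of mapped Jacobi functions (indices $n-1,n,n+1$) with coefficients of order $(n(n+\alpha_1+\alpha_2+1))^{1/2}+(1+\alpha_1+\alpha_2+r/2)$. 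Squaring and using orthonormality, together with the fact that $\|(\cdot)\|_{L^2}$ picks up an extra $\beta$ from the Jacobian of the mapping, produces the $\beta^{3/2}$ prefactor and precisely the bound $N_{\alpha,r}=2N(N+\alpha_1+\alpha_2+1)+2(1+\alpha_1+\alpha_2+r/2)^2$ after applying Cauchy--Schwarz over the three adjacent modes.

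For the second inequality, the key observation is that for $r\le 1$ the factor $x\,h'_{\beta,r}(x)=\beta x(1-h_{\beta,r}^2(x))^{1+r/2}$ can be rewritten in terms of $h_{\beta,r}(x)$ itself and is uniformly bounded in $|\beta x|$; concretely, for $r=0,1$ one sees from Eq.~\eqref{eq:1-3} that $|x\,h'_{\beta,r}(x)|\le C\beta^{-1}$. Substituting this into the expression for $x\,\partial_x\mathcal{J}_n^{\beta,x_0}$ obtained above reduces the prefactor from $\beta^{3/2}$ to $\beta^{1/2}$, while the mode-dependent part yielding $N_{\alpha,r}^{1/2}$ is unchanged.

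The main obstacle will be the bookkeeping in the 1D step: tracking the chain-rule contributions coming from both $j_n^{\alpha_1,\alpha_2}\circ h_{\beta,r}$ and the modified weight $\mu_{\alpha_1,\alpha_2}\circ h_{\beta,r}$, then re-expanding the result back into the orthonormal mapped Jacobi basis using the three-term recurrence so that the constants collapse into the clean form of $N_{\alpha,r}$. Once the 1D identity is nailed down, the hyperbolic cross lifting is essentially mechanical, since it only exploits tensor-product orthonormality and the bound $n_i\le N$ built into $\Upsilon_{N,\gamma}$, and does not require any sparsity-specific cancellation.
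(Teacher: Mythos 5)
Your reduction of the multidimensional case to one dimension is exactly what the paper does: tensor-product orthonormality in the directions $j\neq i$ plus the fact that $n_i\le N$ for every $\boldsymbol{n}\in\Upsilon_{N,\gamma}$, so no sparsity-specific cancellation is needed. The gap is in your one-dimensional step. You propose to differentiate each basis function and re-expand $\partial_x\mathcal{J}_n^{\beta,x_0}$ as a tridiagonal combination $\sum_{m=n-1}^{n+1}c_{nm}\mathcal{J}_m^{\beta,x_0}$, but this finite re-expansion does not exist for the mappings the lemma covers. The chain rule brings in the factor $h'_{\beta,r}(x)=\beta\bigl(1-h_{\beta,r}^2(x)\bigr)^{1+r/2}$, and for $r$ odd (in particular $r=1$, the algebraic mapping used throughout the paper) the factor $(1-\xi^2)^{r/2}$ is not a polynomial in $\xi$; consequently $\partial_x\mathcal{J}_n^{\beta,x_0}$ has an \emph{infinite} expansion in the mapped Jacobi basis and the three-adjacent-modes bookkeeping, the Cauchy--Schwarz step, and the claimed collapse of constants into $N_{\alpha,r}$ all break down. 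The paper sidesteps this entirely: it never expands the derivative in the basis, but instead writes $U_N^{\beta,x_0}=p_N(\xi)\mu_{\alpha_1,\alpha_2}(\xi)$ for the full polynomial $p_N$ of degree $N$, inserts the auxiliary function $g(\xi)=(1-\xi^2)^{-r/2}\ge 1$, splits $\partial_x(p_N\mu)$ into the $p_N'\mu$ and $p_N\mu'$ contributions, and controls the first with the classical weighted inverse inequality $\int(p_N')^2w^{\alpha_1+1,\alpha_2+1}\le N(N+\alpha_1+\alpha_2+1)\int p_N^2w^{\alpha_1,\alpha_2}$ and the second with a pointwise bound on $\mu'/\mu$; this is where the two terms of $N_{\alpha,r}$ come from.

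Two further points. First, your explanation that the prefactor $\beta^{3/2}$ comes from "an extra $\beta$ from the Jacobian of the mapping" is not consistent with your own setup: the functions $\mathcal{J}_m^{\beta,x_0}$ are already $L^2(\mathbb{R},\mathrm{d}x)$-orthonormal, so squaring a coefficient vector picks up no Jacobian, and a derivative formula $\partial_x\mathcal{J}_n=\beta\sum_mc_{nm}\mathcal{J}_m$ with $\beta$-independent $c_{nm}$ would yield a prefactor $\beta$, not $\beta^{3/2}$. Second, for the inequality \eqref{eq:1-12} your key claim $|x\,h'_{\beta,r}(x)|\le C\beta^{-1}$ is false: for $r=0,1$ one computes $x\,h'_{\beta,r}(x)=u\operatorname{sech}^2(u)$ or $u(1+u^2)^{-3/2}$ with $u=\beta x$, which is bounded by an absolute constant, not by $C\beta^{-1}$, so your mechanism for lowering $\beta^{3/2}$ to $\beta^{1/2}$ does not produce the extra $\beta^{-1}$. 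The paper instead uses the pointwise inequality $\beta^2x^2\le\bigl(1-h_{\beta,r}^2(x)\bigr)^{-1}$ for $r\le1$ and absorbs the resulting factor $(1-\xi^2)^{-1}$ into the surplus powers of $(1-\xi^2)$ already present in the integrand; this is the step you would need to reproduce.
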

The proof of Lemma \ref{th:5-1} is given in \ref{ap:1}.

\subsection{Numerical scheme}\label{sec:2-3}
Here, we describe the AHMJ method to solve the model problem Eq.~\eqref{eq:2-2}. We define two function spaces, $X_{N,\gamma}^{\boldsymbol{\beta}, \boldsymbol{x}_{0}}(t_{0}, t_{1})$ and  $Y_{N,\gamma}^{\boldsymbol{\beta}, \boldsymbol{x}_{0}}(t_{0}, t_{1})$:
\begin{equation}
    \begin{aligned}
        X_{N,\gamma}^{\boldsymbol{\beta}, \boldsymbol{x}_{0}}(t_{0}, t_{1}) &\coloneqq \big\{U_{N, \gamma}^{\boldsymbol{\beta}, \boldsymbol{x}_{0}}\in L^{2}([t_{0}, t_{1}]; V_{N,\gamma}^{\boldsymbol{\beta}, \boldsymbol{x}_{0}}) : \partial_{t}U_{N, \gamma}^{\boldsymbol{\beta},\boldsymbol{x}_{0}}\in L^{2}([t_{0}, t_{1}]; V_{N,\gamma}^{\boldsymbol{\beta}, \boldsymbol{x}_{0}})\big\},\\
        Y_{N,\gamma}^{\boldsymbol{\beta}, \boldsymbol{x}_{0}}(t_{0}, t_{1}) &\coloneqq L^{2}\big([t_{0}, t_{1}]; V_{N,\gamma}^{\boldsymbol{\beta}, \boldsymbol{x}_{0}}\big) \times V_{N,\gamma}^{\boldsymbol{\beta}, \boldsymbol{x}_{0}}.\\
    \end{aligned}
\end{equation}
$X_{N,\gamma}^{\boldsymbol{\beta}, \boldsymbol{x}_{0}}(t_{0}, t_{1})$ is a subspace of the Sobolev-Bochner space $X(t_{0}, t_{1})$, which inherits the norm $\|\cdot\|_{X(t_{0}, t_{1})}$ defined in Eq. \eqref{eq:3-18}. The space $Y_{N,\gamma}^{\boldsymbol{\beta}, \boldsymbol{x}_{0}}(t_{0}, t_{1})$ is equipped with the norm
\begin{equation}
    \|\boldsymbol{v}\|_{Y(t_{0}, t_{1})}^{2} \coloneqq \|(v, \Tilde{v})\|_{Y(t_{0}, t_{1})}^{2} \coloneqq \int_{t_{0}}^{t_{1}}\|v\|_{H^{1}}^{2}\text{d}t + \|\Tilde{v}\|_{L^{2}}^{2}.
\end{equation}

To obtain a continuous-time mapped Jacobi approximation to the solution $u(\boldsymbol{x}, t)$ of the model problem Eq.~\eqref{eq:2-2}, we wish to find 
\begin{equation}\label{eq:3-17}
    \Tilde{U}_{N, \gamma}^{\boldsymbol{\beta}, \boldsymbol{x}_{0}}(\boldsymbol{x}, t) \coloneqq \sum_{\boldsymbol{n} \in \Upsilon_{N, \gamma}} \Tilde{u}_{\boldsymbol{n}}^{\boldsymbol{\beta}, \boldsymbol{x}_{0}}(t) \mathcal{J}_{\boldsymbol{n}}^{\boldsymbol{\beta}, \boldsymbol{x}_{0}}(\boldsymbol{x})\in X_{N,\gamma}^{\boldsymbol{\beta}, \boldsymbol{x}_{0}}(0, T),
\end{equation}
such that $\forall \big(v_{N}, \Tilde{v}_{N}\big) \in Y_{N,\gamma}^{\boldsymbol{\beta}, \boldsymbol{x}_{0}}(0, T)$,
\begin{equation}\label{eq:3-1}
    \begin{aligned}
        \big(\partial_{t}\Tilde{U}_{N, \gamma}^{\boldsymbol{\beta}, \boldsymbol{x}_{0}}, v_{N}\big) + a\big(\Tilde{U}_{N, \gamma}^{\boldsymbol{\beta}, \boldsymbol{x}_{0}}, v_{N}; t\big) = \big(f(\Tilde{U}_{N, \gamma}^{\boldsymbol{\beta}, \boldsymbol{x}_{0}}; t),v_{N}\big),~ \forall t \in [0, T],\,\,
        \big(\Tilde{U}_{N, \gamma}^{\boldsymbol{\beta}, \boldsymbol{x}_{0}}(\cdot, 0), \Tilde{v}_{N}\big) = \big(u_{0}(\cdot), \Tilde{v}_{N}\big).
    \end{aligned}
\end{equation}

We rearrange the coefficients in the hyperbolic-cross-space mapped Jacobi spectral expansion $ \Tilde{U}_{N, \gamma}^{\boldsymbol{\beta}, \boldsymbol{x}_{0}}$ into a vector by arranging the coefficients in dictionary order, \textit{i.e.}, we shall define the following order relation on the index set $\Upsilon_{N, \gamma}$:
\begin{equation}
    \boldsymbol{n}^{1}\le \boldsymbol{n}^{2} : \exists~ i\in\mathbb{N}^+ \text{ such that } n_{i}^{1} \le n_{i}^{2} \text{ and }  \forall j < i,~n_{j}^{1} = n_{j}^{2}.
\end{equation}
Thus, the basis functions can be indexed by $\{1, \cdots, |\Upsilon_{N, \gamma}|\}$, and $\Tilde{U}_{N, \gamma}^{\boldsymbol{\beta}, \boldsymbol{x}_{0}}$ can be rewritten as
\begin{equation}
    \Tilde{U}_{N, \gamma}^{\boldsymbol{\beta}, \boldsymbol{x}_{0}}(\boldsymbol{x}, t) \coloneqq \sum_{i = 1}^{|\Upsilon_{N, \gamma}|} \Tilde{u}_{i}^{\boldsymbol{\beta}, \boldsymbol{x}_{0}}(t) \mathcal{J}_{\boldsymbol{n}^{i}}^{\boldsymbol{\beta}, \boldsymbol{x}_{0}}(\boldsymbol{x}),
\end{equation}
Denoting 
\begin{equation}
\Tilde{\boldsymbol{u}}_{N, \gamma}^{\boldsymbol{\beta}, \boldsymbol{x}_{0}}(t)\coloneqq \big(\Tilde{u}_{1}^{\boldsymbol{\beta}, \boldsymbol{x}_{0}}(t), \cdots, \Tilde{u}_{|\Upsilon_{N, \gamma}|}^{\boldsymbol{\beta}, \boldsymbol{x}_{0}}(t)\big),
\end{equation}
$\Tilde{\boldsymbol{u}}_{N, \gamma}^{\boldsymbol{\beta}, \boldsymbol{x}_{0}}(t)$ satisfies the following ODE
\begin{equation}\label{eq:2-21}
    \begin{aligned}
        \frac{\text{d}}{\text{d}t}\Tilde{\boldsymbol{u}}_{N, \gamma}^{\boldsymbol{\beta}, \boldsymbol{x}_{0}} + A_{N}^{\boldsymbol{\beta}}(t) \Tilde{\boldsymbol{u}}_{N, \gamma}^{\boldsymbol{\beta}, \boldsymbol{x}_{0}} = F_{N}^{\boldsymbol{\beta}}(\Tilde{\boldsymbol{u}}_{N, \gamma}^{\boldsymbol{\beta}, \boldsymbol{x}_{0}}; t),~\forall t \in [0, T],\,\,
        \Tilde{u}_{i}^{\boldsymbol{\beta}, \boldsymbol{x}_{0}}(0) = \big(u_{0}, \mathcal{J}_{\boldsymbol{n}^{i}}^{\boldsymbol{\beta}, \boldsymbol{x}_{0}}\big),~\forall \boldsymbol{n}\in \Upsilon_{N, \gamma}.
    \end{aligned}
\end{equation}
%\deleted{$\Tilde{\boldsymbol{u}}_{N, \gamma}^{\boldsymbol{\beta}, \boldsymbol{x}_{0}} \coloneqq \big(u_{1}^{\boldsymbol{\beta}, \boldsymbol{x}_{0}}, \cdots, u_{i}^{\boldsymbol{\beta}, \boldsymbol{x}_{0}}, \cdots, u_{|\Upsilon_{N, \gamma}|}^{\boldsymbol{\beta}, \boldsymbol{x}_{0}}\big)$ is the vector of coefficients in the hyperbolic-cross-cross mapped Jacobi spectral expansion Eq.~\eqref{eq:3-17}.} 
Additionally, when acting on $\Tilde{\boldsymbol{u}}_{N, \gamma}^{\boldsymbol{\beta}, \boldsymbol{x}_{0}}$, the $i^{\text{th}}$ components of $A_{N}^{\boldsymbol{\beta}}\Tilde{\boldsymbol{u}}_{N, \gamma}^{\boldsymbol{\beta}, \boldsymbol{x}_{0}}$ and $F_{N}^{\boldsymbol{\beta}}(\Tilde{\boldsymbol{u}}_{N, \gamma}^{\boldsymbol{\beta}, \boldsymbol{x}_{0}})$ are calculated by
\begin{equation}
\begin{aligned}
    \big(A_{N}^{\boldsymbol{\beta}}(t)\Tilde{\boldsymbol{u}}_{N, \gamma}^{\boldsymbol{\beta}, \boldsymbol{x}_{0}}\big)_i &= \sum_{j = 1}^{|\Upsilon_{N, \gamma}|} a\big(\mathcal{J}_{\boldsymbol{n}^i}^{\boldsymbol{\beta}, \boldsymbol{x}_{0}}, \mathcal{J}_{\boldsymbol{n}^{j}}^{\boldsymbol{\beta}, \boldsymbol{x}_{0}}; t\big)\Tilde{u}_{j}^{\boldsymbol{\beta}, \boldsymbol{x}_{0}},\,\,
    \big(F_{N}^{\boldsymbol{\beta}}\big(\Tilde{\boldsymbol{u}}_{N, \gamma}^{\boldsymbol{\beta}, \boldsymbol{x}_{0}}; t\big)\big)_{i} &= \big(f\big(\Tilde{U}_{N, \gamma}^{\boldsymbol{\beta}, \boldsymbol{x}_{0}}; t\big), \mathcal{J}_{\boldsymbol{n}^i}^{\boldsymbol{\beta}, \boldsymbol{x}_{0}}\big).
\end{aligned}
\end{equation}
 
The ODE \eqref{eq:2-21} on the mapped Jacobi expansion coefficients usually cannot be analytically solved. Instead, it can be numerically solved using implicit Runge-Kutta (IRK) schemes \citep[Chapters 69-70]{Ern2021}. To be specific, we divide the time interval $[0, T]$ into $K$ subintervals $[t_{\ell}, t_{\ell + 1}]$ using a uniform step size $\Delta t$, where $t_{\ell} = \ell \Delta t$ for $\ell \in \{0, 1, 2, \cdots, K\}$. Given the parameters $(\boldsymbol{\beta}_{\ell}, N_{\ell}, {\boldsymbol{x}_{0}}_{\ell})$ within the time interval $(t_{\ell}, t_{\ell + 1})$ and the numerical solution at time $t=t_{\ell}$,
\begin{equation}
    U_{N, \gamma}^{\boldsymbol{\beta}, \boldsymbol{x}_{0}}(\boldsymbol{x}, t_{\ell}) = \sum_{i = 1}^{|\Upsilon_{N,\gamma}|} u_{i}^{\boldsymbol{\beta}, \boldsymbol{x}_{0}}(t_{\ell}) \mathcal{J}_{\boldsymbol{n}^{i}}^{\boldsymbol{\beta}, \boldsymbol{x}_{0}}(\boldsymbol{x}),
\end{equation}
the $q^{\text{th}}$-order IRK scheme for forwarding time from $t_{\ell}$ to $t_{\ell + 1}$ is
\begin{equation}\label{eq:3-16}
    \begin{aligned}
        \boldsymbol{u}_{N, \gamma}^{\boldsymbol{\beta}, \boldsymbol{x}_{0}}(t_{\ell + 1}) &= \boldsymbol{u}_{N, \gamma}^{\boldsymbol{\beta}, \boldsymbol{x}_{0}}(t_{\ell}) + \Delta t\sum_{s = 1}^{q}b^{s}_{RK} G_{N}^{\boldsymbol{\beta}}(\boldsymbol{w}_{s}, t_{\ell} + c^{s}_{RK}\Delta t),\\
        \boldsymbol{w}_{s} &= \boldsymbol{u}_{N, \gamma}^{\boldsymbol{\beta}, \boldsymbol{x}_{0}}(t_{\ell}) + \Delta t\sum_{r = 1}^{q} a^{rs}_{RK}G_{N}^{\boldsymbol{\beta}}(\boldsymbol{w}_{r}, t_{\ell} + c^{r}_{RK}\Delta t),
    \end{aligned}
\end{equation}
where 
$a^{rs}_{RK}$, $b^{s}_{RK}$ and $c^{s}_{RK}$ are the IRK coefficients. $G_{N}^{\boldsymbol{\beta}}$ on the RHS is given by
\begin{equation}
    G_{N}^{\boldsymbol{\beta}}(\boldsymbol{w}_{s}, t) \coloneqq F_{N}^{\boldsymbol{\beta}}(\boldsymbol{w}_{s}; t) - A_{N}^{\boldsymbol{\beta}}(t)\boldsymbol{w}_{s}.
\end{equation}
The numerical solution at $t_{\ell+1}$ is thus
\begin{equation}
    U_{N, \gamma}^{\boldsymbol{\beta}, \boldsymbol{x}_{0}}(\boldsymbol{x}, t_{\ell + 1}) = \sum_{i = 1}^{|\Upsilon_{N,\gamma}|} u_{i}^{\boldsymbol{\beta}, \boldsymbol{x}_{0}}(t_{\ell + 1}) \mathcal{J}_{\boldsymbol{n}^{i}}^{\boldsymbol{\beta}, \boldsymbol{x}_{0}}(\boldsymbol{x}).
\end{equation}
Existence of the solution to the IRK system Eq. \eqref{eq:3-16} is proved in \cite{Lubich2016}. 

Finally, it has been revealed that adaptively adjusting the parameters $\boldsymbol{\beta}, \boldsymbol{x}_0$, and $N$ over time is crucial for efficiently applying spectral methods to solve spatiotemporal equations \cite{Xia2020, Xia2020(1), Xia2022, Xia2023}. Suppose we use the IRK scheme Eq.~\eqref{eq:3-16} to forward time and get $U_{N_{\ell}, \gamma}^{\boldsymbol{\beta}_{\ell}, {\boldsymbol{x}_{0}}_{\ell}}(\boldsymbol{x}, t_{\ell+1})$ at $t_{\ell+1}$ given the numerical solution $U_{N_{\ell}, \gamma}^{\boldsymbol{\beta}_{\ell}, {\boldsymbol{x}_{0}}_{\ell}}(\boldsymbol{x}, t_{\ell})$ at $t_{\ell}$ (the IRK scheme will not adjust the three parameters $\boldsymbol{\beta}$, $\boldsymbol{x}_0$, and $N$). We then apply the adaptive hyperbolic-cross-space techniques for spectral methods (described in Section \ref{sec:4}) to update the parameters $(\boldsymbol{\beta}_{\ell}, N_{\ell}, {\boldsymbol{x}_{0}}_{\ell}) \longrightarrow (\boldsymbol{\beta}_{\ell+1}, N_{\ell+1}, {\boldsymbol{x}_{0}}_{\ell+1})$ and get the new numerical solution at $t_{\ell+1}$:
\begin{equation}\label{eq:4-22}
     U_{{N}_{\ell+1}, \gamma}^{\boldsymbol{\beta}_{\ell+1}, {\boldsymbol{x}_{0}}_{\ell+1}}(\boldsymbol{x}, t_{\ell + 1}) \longleftarrow \pi_{{N}_{\ell+1}, \gamma}^{{\boldsymbol{\beta}}_{\ell+1}, {\boldsymbol{x}_{0}}_{\ell+1}} U_{N_{\ell}, \gamma}^{\boldsymbol{\beta}_{\ell}, {\boldsymbol{x}_{0}}_{\ell}}(\boldsymbol{x}, t_{\ell + 1}).
\end{equation}
$\pi_{{N}_{\ell+1}, \gamma}^{{\boldsymbol{\beta}}_{\ell+1}, {\boldsymbol{x}_{0}}_{\ell+1}}$ is the projection operator defined in Table.~\ref{table:note}.

\section{Analysis on the AHMJ method}\label{sec:3}
In this section, we give an upper error bound of $\big\|u(\cdot, t) - U_{N, \gamma}^{\boldsymbol{\beta}, \boldsymbol{x}_{0}}(\cdot, t)\big\|_{L^{2}}$, where $u$ solves the model problem~\eqref{eq:2-2} and $U_{N, \gamma}^{\boldsymbol{\beta}, \boldsymbol{x}_{0}}(\cdot, t)$  is the numerical solution obtained by the AHMJ method described in Subsection~\ref{sec:2-3}, respectively. In Subsection~\ref{sec:3-1}, we analyze the error bound on the mapped Jacobi approximation which solves the continuous-time problem Eq.~\eqref{eq:3-1}. Next, we derive the error bound for applying the IRK scheme, detailed in Subsection \ref{sec:3-2}. Then, we carry out an analysis on the error bound for implementing adaptive techniques in Subsection~\ref{sec:3-3}. Integrating the aforementioned error analysis, we shall eventually prove Theorem~\ref{th:1-1}.

\subsection{Continuous-time mapped Jacobi approximation error}\label{sec:3-1}
In this subsection, we give the upper error bound of solving the continuous-time approximation equation Eq.~\eqref{eq:3-1} with a hyperbolic-cross-space mapped Jacobi approximation.
\begin{theorem}\label{th:6-1}
Suppose $\Tilde{U}_{N_{\ell}, \gamma}^{\boldsymbol{\beta}_{\ell}, {\boldsymbol{x}_{0}}_{\ell}}(\boldsymbol{x}, t) \in X_{N_{\ell}, \gamma}^{\boldsymbol{\beta}_{\ell}, {\boldsymbol{x}_{0}}_{\ell}}(t_{\ell}, t_{\ell + 1})$ solves

\begin{equation}\label{eq:4-23}
    \begin{aligned}
        \big(\partial_{t}\Tilde{U}_{N, \gamma}^{\boldsymbol{\beta}, \boldsymbol{x}_{0}}, v_{N}\big) + a\big(\Tilde{U}_{N, \gamma}^{\boldsymbol{\beta}, \boldsymbol{x}_{0}}, v_{N}; t\big) &= \big(f(\Tilde{U}_{N, \gamma}^{\boldsymbol{\beta}, \boldsymbol{x}_{0}}; t),v_{N}\big),~ \forall t \in [t_{\ell}, t_{\ell+1}],\\
        \big(\Tilde{U}_{N, \gamma}^{\boldsymbol{\beta}, \boldsymbol{x}_{0}}(\cdot, t_{\ell}), \Tilde{v}_{N}\big) &= \big(U(\boldsymbol{x}, t_{\ell}), \Tilde{v}_{N}\big),~ \forall \big(v_{N}, \Tilde{v}_{N}\big) \in Y_{N_{\ell},\gamma}^{\boldsymbol{\beta}_{\ell}, {\boldsymbol{x}_{0}}_{\ell}}(t_{\ell}, t_{\ell + 1}).
    \end{aligned}
\end{equation}
Then, there exist two constants, $C_{\mathcal{M}}$ and $c_{\mathcal{J}}$, that only depend on $a(u, v; t)$ and $f(u; t)$, such that
\begin{equation}\label{eq:2-20}
    \begin{aligned}
        \big\|u(\cdot, t_{\ell + 1}) - \Tilde{U}_{N_{\ell}, \gamma}^{\boldsymbol{\beta}_{\ell}, {\boldsymbol{x}_{0}}_{\ell}}(\cdot, t_{\ell + 1})\big\|_{L^{2}} &\le \exp\big((L - c_{0})\Delta t\big)\big\|u(\cdot, t_{\ell}) - U(\cdot, t_{\ell})\big\|_{L^{2}}\\
        &\quad+ C_{\mathcal{M}}\exp(c_{\mathcal{J}}\Delta t)\big\|u - \pi_{N_{\ell},\gamma}^{\boldsymbol{\beta}_{\ell}, {\boldsymbol{x}_{0}}_{\ell}}u\big\|_{X(t_{\ell}, t_{\ell + 1})},
    \end{aligned}
\end{equation}
where $u(\boldsymbol{x}, t)$ is the analytical solution to the model problem Eq. \eqref{eq:2-2}.
\end{theorem}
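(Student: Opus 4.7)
The plan is a standard Galerkin-in-space energy argument localized to the subinterval $[t_\ell, t_{\ell+1}]$. First, I would decompose the error via the $L^{2}$-orthogonal projection $\pi \coloneqq \pi_{N_\ell,\gamma}^{\boldsymbol{\beta}_\ell,{\boldsymbol{x}_0}_\ell}$: write $u - \tilde{U}_{N_\ell, \gamma}^{\boldsymbol{\beta}_\ell, {\boldsymbol{x}_0}_\ell} = \eta + e_N$ with $\eta \coloneqq u - \pi u$, which is $L^2$-orthogonal to $V_{N_\ell,\gamma}^{\boldsymbol{\beta}_\ell,{\boldsymbol{x}_0}_\ell}$, and $e_N \coloneqq \pi u - \tilde{U}_{N_\ell,\gamma}^{\boldsymbol{\beta}_\ell,{\boldsymbol{x}_0}_\ell}$, which lies in the discrete space and is therefore admissible as a test function. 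Subtracting Eq.~\eqref{eq:4-23} from the exact weak form Eq.~\eqref{eq:2-2} tested against $v = v_N = e_N$, and exploiting that the time-independent projection $\pi$ commutes with $\partial_t$ so that $(\partial_t \eta, e_N) = 0$, yields the error identity
\begin{equation*}
\tfrac{1}{2}\tfrac{\text{d}}{\text{d}t}\|e_N\|_{L^2}^2 + a(e_N, e_N; t) = \big(f(u;t) - f(\tilde{U}_{N,\gamma}^{\boldsymbol{\beta},\boldsymbol{x}_0}; t), e_N\big) - a(\eta, e_N; t).
\end{equation*}

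Next, I would apply the coercivity in Eq.~\eqref{eq:1-4} to bound $a(e_N, e_N; t) \ge c_0\|e_N\|_{H^1}^2$, the Lipschitz condition Eq.~\eqref{eq:1-5} with $\|u - \tilde{U}_{N,\gamma}^{\boldsymbol{\beta},\boldsymbol{x}_0}\|_{L^2} \le \|\eta\|_{L^2} + \|e_N\|_{L^2}$ for the nonlinear term, and the continuity bound $a(\eta, e_N; t) \le C_0\|\eta\|_{H^1}\|e_N\|_{H^1}$. A Young's inequality absorbs the $\|e_N\|_{H^1}$ factor from the continuity term into a portion of the coercive dissipation, and the trivial embedding $\|e_N\|_{L^2} \le \|e_N\|_{H^1}$ converts the remaining $H^1$ dissipation into $L^2$ dissipation. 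The outcome is a scalar differential inequality of the form
\begin{equation*}
\tfrac{\text{d}}{\text{d}t}\|e_N\|_{L^2}^2 + 2(c_0 - L)\|e_N\|_{L^2}^2 \le C_1\|\eta\|_{L^2}^2 + C_2\|\eta\|_{H^1}^2,
\end{equation*}
with constants $C_1, C_2$ depending only on $c_0, C_0, L$.

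The third step is to apply the integral Gronwall inequality on $[t_\ell, t_{\ell+1}]$ and take square roots, producing a bound of the form
\begin{equation*}
\|e_N(\cdot, t_{\ell+1})\|_{L^2} \le \exp\!\big((L - c_0)\Delta t\big)\|e_N(\cdot, t_\ell)\|_{L^2} + C\exp(c_{\mathcal{J}}\Delta t)\|u - \pi u\|_{X(t_\ell, t_{\ell+1})},
\end{equation*}
after recognizing the time integrals of $\|\eta\|_{L^2}^2$ and $\|\eta\|_{H^1}^2$ as components of $\|u - \pi u\|_{X(t_\ell, t_{\ell+1})}^2$ via Eq.~\eqref{eq:3-18}. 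For the starting value at $t_\ell$, since $\tilde{U}_{N,\gamma}^{\boldsymbol{\beta},\boldsymbol{x}_0}(\cdot, t_\ell)$ equals $U(\cdot, t_\ell)$ (already an element of $V_{N_\ell,\gamma}^{\boldsymbol{\beta}_\ell,{\boldsymbol{x}_0}_\ell}$), the Pythagorean identity induced by the $L^2$-orthogonal projection gives $\|e_N(\cdot, t_\ell)\|_{L^2} \le \|u(\cdot, t_\ell) - U(\cdot, t_\ell)\|_{L^2}$. Finally, the triangle inequality $\|u - \tilde{U}_{N,\gamma}^{\boldsymbol{\beta},\boldsymbol{x}_0}\|_{L^2}(t_{\ell+1}) \le \|\eta(\cdot, t_{\ell+1})\|_{L^2} + \|e_N(\cdot, t_{\ell+1})\|_{L^2}$, combined with the Lions--Magenes continuous embedding $X(t_\ell, t_{\ell+1}) \hookrightarrow C([t_\ell, t_{\ell+1}]; L^2(\mathbb{R}^d))$ to control $\|\eta(\cdot, t_{\ell+1})\|_{L^2}$ by $\|u - \pi u\|_{X(t_\ell, t_{\ell+1})}$, delivers Eq.~\eqref{eq:2-20} with appropriately chosen $C_{\mathcal{M}}$ and $c_{\mathcal{J}}$.

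The main obstacle will be the careful calibration of the Young's inequality parameters so that the sharp exponent $(L - c_0)\Delta t$ governing propagation of the previous-step error survives intact, while all the auxiliary absorption losses are packaged into the constant $c_{\mathcal{J}}$ multiplying the projection-error term only. A related but lesser subtlety is ensuring that the Bochner--Sobolev trace estimate bounding the pointwise value $\|\eta(\cdot, t_{\ell+1})\|_{L^2}$ by the interval norm $\|u - \pi u\|_{X(t_\ell, t_{\ell+1})}$ contributes a constant that is independent of $\Delta t$, so that no hidden subinterval-dependent growth sneaks into $C_{\mathcal{M}}$.
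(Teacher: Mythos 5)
Your route is a genuinely different and more elementary one than the paper's: you run a single Galerkin energy estimate with the splitting $u-\Tilde{U}=\eta+e_N$ about the $L^2$ projection, whereas the paper introduces an intermediate Galerkin solution $\mathring{U}_{N,\gamma}^{\boldsymbol{\beta},\boldsymbol{x}_0}$ launched from the \emph{exact} data $u(\cdot,t_\ell)$, proves a quasi-optimality bound for $u-\mathring{U}$ via a space--time inf-sup argument for the bilinear form $B(u,\boldsymbol{v};s)$ (\ref{ap:3}), and then separately estimates $\mathring{U}-\Tilde{U}$ by a forcing-free energy/Gronwall argument. Most of your individual steps are sound: the Galerkin orthogonality $(\partial_t\eta,e_N)=0$, the treatment of the initial data via the contractivity of $\pi$, and the trace bound $\|\eta(\cdot,t_{\ell+1})\|_{L^2}\le\|u-\pi u\|_{X(t_\ell,t_{\ell+1})}$ (whose constant is indeed $1$, by the definition \eqref{eq:3-18}) all go through.

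The genuine gap is exactly the point you flag as "calibration of the Young's inequality parameters," and it is not a technicality that careful bookkeeping can fix: within your single coupled estimate the sharp propagation factor $\exp\big((L-c_0)\Delta t\big)$ is unattainable. To reach the rate $L-c_0$ on $\|e_N\|_{L^2}^2$ you must convert the \emph{entire} dissipation $c_0\|e_N\|_{H^1}^2$ into $c_0\|e_N\|_{L^2}^2$; but the consistency term $a(\eta,e_N;t)\le C_0\|\eta\|_{H^1}\|e_N\|_{H^1}$ can only be absorbed by sacrificing a positive fraction $\epsilon\|e_N\|_{H^1}^2$ of that same dissipation (there is no bound of $\|e_N\|_{H^1}$ by $\|e_N\|_{L^2}$ in the discrete space that is uniform in $N$ and $\boldsymbol{\beta}$ with favorable constants). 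Whatever $\epsilon>0$ you choose, Gronwall returns $\exp\big((L-c_0+\epsilon)\Delta t\big)$ on the inherited error $\|u(\cdot,t_\ell)-U(\cdot,t_\ell)\|_{L^2}$, which is strictly weaker than the claimed bound and propagates a worse constant into the accumulation in Theorem \ref{th:2-4}. The repair is precisely the paper's structural device: decouple the two effects by comparing $\Tilde{U}$ with the auxiliary Galerkin solution $\mathring{U}$ started from $u(\cdot,t_\ell)$. The difference $v_2=\mathring{U}-\Tilde{U}$ then satisfies a discrete equation with no projection-error forcing, so its energy identity yields the clean contraction $\exp\big((L-c_0)\Delta t\big)$ on the inherited error (Eq.~\eqref{B1:22}), while every $\eta$-dependent loss is confined to $u-\mathring{U}$, whose initial error vanishes and whose constants are harmlessly packaged into $C_{\mathcal{M}}\exp(c_{\mathcal{J}}\Delta t)$.
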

The proof of Theorem \ref{th:6-1} is given in \ref{ap:3}.

\subsection{Implicit Runge-Kutta scheme error}\label{sec:3-2}
Next, we discuss the error bound for implementing the IRK scheme Eq.~\eqref{eq:3-16} to forward time from $t_{\ell}$ to $t_{\ell+1}$ to solve Eq.~\eqref{eq:4-23}. 

Given $U_{N, \gamma}^{\boldsymbol{\beta}, \boldsymbol{x}_{0}}(\boldsymbol{x}, t_{\ell})$ at $t_{\ell}$ as the numerical solution at $t_{\ell}$, we have
\begin{equation}
    \begin{aligned}
        \big\|u(\cdot, t_{\ell + 1}) - U_{N_{\ell}, \gamma}^{\boldsymbol{\beta}_{\ell}, {\boldsymbol{x}_{0}}_{\ell}}(\cdot, t_{\ell + 1})\big\|_{L^{2}} &\le \big\|u(\cdot, t_{\ell + 1}) - \Tilde{U}_{N_{\ell}, \gamma}^{\boldsymbol{\beta}_{\ell}, {\boldsymbol{x}_{0}}_{\ell}}(\cdot, t_{\ell + 1})\big\|_{L^{2}}\\
        &\quad+ \big\|\Tilde{U}_{N_{\ell}, \gamma}^{\boldsymbol{\beta}_{\ell}, {\boldsymbol{x}_{0}}_{\ell}}(\cdot, t_{\ell + 1}) - U_{N_{\ell}, \gamma}^{\boldsymbol{\beta}_{\ell}, {\boldsymbol{x}_{0}}_{\ell}}(\cdot, t_{\ell + 1})\big\|_{L^{2}},
    \end{aligned}
\end{equation}
where $\Tilde{U}_{N_{\ell}, \gamma}^{\boldsymbol{\beta}_{\ell}, {\boldsymbol{x}_{0}}_{\ell}}(\boldsymbol{x}, t_{\ell + 1})$ is the solution of the continuous-time problem Eq. \eqref{eq:4-23}. $\big\|\Tilde{U}_{N_{\ell}, \gamma}^{\boldsymbol{\beta}_{\ell}, {\boldsymbol{x}_{0}}_{\ell}}(\cdot, t_{\ell + 1}) - U_{N_{\ell}, \gamma}^{\boldsymbol{\beta}_{\ell}, {\boldsymbol{x}_{0}}_{\ell}}(\cdot, t_{\ell + 1})\big\|_{L^{2}}$ is the error from applying the IRK scheme. The analysis of the IRK scheme has been carried out in \cite{Ostermann1995, Lubich2016}, which is presented in Theorem \ref{th:6-2}.

\begin{theorem}\label{th:6-2}
Let $U_{N_{\ell}, \gamma}^{\boldsymbol{\beta}_{\ell}, {\boldsymbol{x}_{0}}_{\ell}}(\boldsymbol{x}, t_{\ell + 1})$ be the numerical solution to Eq. \eqref{eq:4-23} obtained by the IRK scheme in Eq. \eqref{eq:3-16}. Suppose that the IRK scheme in Eq. \eqref{eq:3-16} satisfies
\begin{enumerate}
    \item \citep[section 4]{Lubich2016} The IRK scheme has a stage order $q$ and a quadrature order at least $q + 1$.
    \item \citep[algebraic stability]{Lubich2016} The weights $(b^{s}_{RK})_{s = 1}^{q}$ are positive, and the matrix $\mathcal{M}\coloneqq (a^{rs}_{RK}b^{r}_{RK} + a^{sr}_{RK}b^{s}_{RK} - b^{r}_{RK}b^{s}_{RK})_{r, s = 1}^{q}\in\mathbb{R}^{q\times q}$ is positive semi-definite.
\end{enumerate}
We assume that the time step $\Delta t$ satisfies:
\begin{equation}
    \Delta t \le \frac{c_{0}}{4\sqrt{2}L(C_{0} + L)C_{ab}} \text{ where } C_{ab}^{2} = \sum_{s = 1}^{q}\sum_{r = 1}^{q} \big(a_{RK}^{rs}\big)^{2}b_{RK}^{r}/b_{RK}^{s}.
\end{equation}
Then, there exists a constant $C_{RK}$ that depends on the bilinear form $a(u, v; t)$, the nonlinear operator $f(u; t)$, and the IRK coefficients, such that
\begin{equation}\label{RK_error_bound}
    \big\|\Tilde{U}_{N_{\ell}, \gamma}^{\boldsymbol{\beta}_{\ell}, {\boldsymbol{x}_{0}}_{\ell}}(\cdot, t_{\ell + 1}) - U_{N_{\ell}, \gamma}^{\boldsymbol{\beta}_{\ell}, {\boldsymbol{x}_{0}}_{\ell}}(\cdot, t_{\ell + 1})\big\|_{L^{2}} \le C_{RK} \Delta t^{q + 1} \big\|\partial_{t}^{(q + 1)}\Tilde{U}_{N_{\ell}, \gamma}^{\boldsymbol{\beta}_{\ell}, {\boldsymbol{x}_{0}}_{\ell}} \big\|_{X(t_{\ell}, t_{\ell + 1})},
\end{equation}
where $\Tilde{U}_{N_{\ell}, \gamma}^{\boldsymbol{\beta}_{\ell}, {\boldsymbol{x}_{0}}_{\ell}}(\boldsymbol{x}, t)$ is the solution to Eq. \eqref{eq:4-23}.
\end{theorem}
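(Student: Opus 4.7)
The plan is to follow the classical Ostermann--Lubich framework for algebraically stable implicit Runge--Kutta methods applied to monotone parabolic problems; the cited references \cite{Ostermann1995, Lubich2016} treat essentially this setting, so my task is mainly to organize the pieces for our semidiscrete operator $G_N^{\boldsymbol{\beta}}$ and verify that the constants line up with the smallness hypothesis on $\Delta t$. First I would insert the exact continuous-time solution $\tilde{U}_{N_\ell,\gamma}^{\boldsymbol{\beta}_\ell,{\boldsymbol{x}_0}_\ell}$ into the IRK scheme \eqref{eq:3-16} and measure how well it satisfies the discrete stage and step equations, obtaining stage defects
\begin{equation}
    \delta_s \coloneqq \tilde{U}(t_\ell + c^s_{RK}\Delta t) - \tilde{U}(t_\ell) - \Delta t\sum_{r=1}^q a^{rs}_{RK}\, G_N^{\boldsymbol{\beta}}\bigl(\tilde{\boldsymbol{w}}_r, t_\ell + c^r_{RK}\Delta t\bigr),
\end{equation}
and an analogously defined step defect $\delta_{\ell+1}$ built with the weights $b^s_{RK}$. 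Taylor expansion of $\tilde{U}$ around $t_\ell$, combined with the stage-order $q$ assumption (which cancels Taylor terms of order $\le q$ in the stage equations) and the quadrature-order $q+1$ condition (which cancels terms through order $q+1$ in the step), yields $\|\delta_s\|_{L^2}, \|\delta_{\ell+1}\|_{L^2} \lesssim \Delta t^{q+1}\|\partial_t^{(q+1)}\tilde{U}\|_{X(t_\ell,t_{\ell+1})}$.

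Subtracting the IRK stage equations for $U$ from those satisfied by $\tilde{U}$ up to the defects gives a coupled system for the stage errors $\eta_s \coloneqq \tilde{\boldsymbol{w}}_s - \boldsymbol{w}_s$. Writing $\Delta G_s \coloneqq G_N^{\boldsymbol\beta}(\tilde{\boldsymbol w}_s,\cdot) - G_N^{\boldsymbol\beta}(\boldsymbol w_s,\cdot)$, testing with $2\Delta t\, b^s_{RK} \eta_s$ and summing in $s$, I invoke the standard algebraic identity
\begin{equation}
    \|\epsilon_{\ell+1}\|_{L^2}^2 = \|\epsilon_\ell\|_{L^2}^2 + 2\Delta t\sum_{s=1}^q b^s_{RK} (\eta_s, \Delta G_s) - \Delta t^2\sum_{r,s=1}^q \mathcal{M}_{rs}(\Delta G_r, \Delta G_s) + \mathcal{D},
\end{equation}
where $\epsilon_\ell \coloneqq \tilde{U}(t_\ell) - U(t_\ell) = 0$ by construction and $\mathcal{D}$ collects the defect contributions. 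Positive semi-definiteness of $\mathcal{M}$ lets me drop the quadratic form on the right. Expanding $(\eta_s, \Delta G_s) = -a(\eta_s, \eta_s; \cdot) + (f(\tilde{\boldsymbol w}_s) - f(\boldsymbol w_s), \eta_s)$ and combining the coercivity $a(\eta_s,\eta_s) \ge c_0 \|\eta_s\|_{H^1}^2$ with the Lipschitz bound $|(f(\tilde{\boldsymbol w}_s) - f(\boldsymbol w_s), \eta_s)| \le L\|\eta_s\|_{H^1}^2$ produces a strictly dissipative contribution $-(c_0 - L)\|\eta_s\|_{H^1}^2$ per stage.

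It then remains to dominate the defect cross-terms in $\mathcal{D}$ by this dissipative margin. Applying Young's inequality to each cross pairing, the natural weight is given by the constant $C_{ab}^2 = \sum_{r,s}(a^{rs}_{RK})^2 b^r_{RK}/b^s_{RK}$ that measures $A_{RK}$ in the $b_{RK}$-weighted inner product, together with the continuity constant $C_0$ of $a$ and the Lipschitz constant $L$ of $f$. A direct accounting shows that the margin $(c_0 - L)$ absorbs the resulting inflation precisely under the hypothesis $\Delta t \le c_0 /\bigl(4\sqrt{2}\,L(C_0 + L) C_{ab}\bigr)$, which simultaneously makes the nonlinear stage system a contraction (guaranteeing existence of $\boldsymbol{w}_s$) and yields $\|\eta_s\|_{H^1} \lesssim \|\delta_s\|_{L^2}/\Delta t$. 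Combining this with $\epsilon_\ell = 0$ and the defect bound from the first paragraph produces $\|\epsilon_{\ell+1}\|_{L^2} \le C_{RK}\Delta t^{q+1}\|\partial_t^{(q+1)}\tilde{U}\|_{X(t_\ell,t_{\ell+1})}$ with a constant $C_{RK}$ depending only on $c_0, C_0, L$, and the IRK coefficients, which is exactly \eqref{RK_error_bound}.

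The main obstacle is the constant-chasing in the last step: verifying with the \emph{exact} multiplicative constants that the algebraic-stability quadratic form, the coercivity--Lipschitz margin $c_0 - L$, and the Young's-inequality weights conspire to give precisely the threshold $\Delta t \le c_0/(4\sqrt{2}\,L(C_0 + L) C_{ab})$ stated in the hypothesis rather than some loose variant. This is the parabolic B-convergence calculation of \cite{Lubich2016} adapted to our semidiscrete operator $A_N^{\boldsymbol\beta} + F_N^{\boldsymbol\beta}$; since $a$ and $f$ restricted to $V_{N_\ell,\gamma}^{\boldsymbol\beta_\ell,{\boldsymbol x_0}_\ell}$ inherit the constants in \eqref{eq:1-4}--\eqref{eq:1-5}, the structural argument carries through verbatim and only the bookkeeping of constants requires care.
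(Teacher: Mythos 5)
Your overall skeleton matches the paper's proof in \ref{ap:4}: insert the continuous-time approximation into the IRK scheme to generate stage and step defects bounded by $\Delta t^{q+1}\|\partial_t^{(q+1)}\tilde U\|_{X(t_\ell,t_{\ell+1})}$ via the stage/quadrature order (the paper does this with Peano kernels), then use the algebraic-stability identity to drop the $\mathcal{M}$-quadratic form, apply coercivity and Lipschitz continuity, and absorb the remainder using the smallness of $\Delta t$. However, there is a genuine gap in the step where you handle the nonlinear term. You replace $|(f(\tilde{\boldsymbol w}_s)-f(\boldsymbol w_s),\eta_s)|\le L\|\eta_s\|_{L^2}^2$ by $L\|\eta_s\|_{H^1}^2$ and claim a dissipative margin $-(c_0-L)\|\eta_s\|_{H^1}^2$ that absorbs everything under the stated restriction on $\Delta t$. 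This fails on two counts. First, Theorem \ref{th:6-2} does not assume $L<c_0$, so the margin $c_0-L$ may be nonpositive; the paper's proof explicitly treats the regime $c_0<4L$, where your margin is useless. Second, even when $L<c_0$, the quantity $c_0-L$ does not appear anywhere in the threshold $\Delta t\le c_0/(4\sqrt 2\,L(C_0+L)C_{ab})$, so your ``direct accounting'' cannot reproduce it.

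The missing idea is the duality interpolation that the paper uses to trade the troublesome term $4L\,\Delta t\sum_r b^r_{RK}\|E_{0r}\|_{L^2}^2$ for quantities controlled by the $H^1$ dissipation: one writes $\|E_{0r}\|_{L^2}^2\le \tfrac{1}{2\tau_0}\|E_{0r}\|_{H^1}^2+\tfrac{\tau_0}{2}\|E_{0r}\|_{H^{-1}}^2$ with the specific choice $\tau_0=4L/c_0$ (so that the $H^1$ part costs exactly $c_0/2$ of the available $-c_0$ coefficient), then bounds $\|E_{0r}\|_{H^{-1}}$ by going back through the stage relation $E_{0r}=\Delta t\sum_s a^{rs}_{RK}E_{0s}'-Q_{0r}$ together with $\|E_{0s}'\|_{H^{-1}}\le (C_0+L)\|E_{0s}\|_{H^1}$. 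It is precisely this loop --- $L^2$ to $H^{-1}$ to the stage equation and back to $H^1$ --- that produces the factor $\frac{16L^2}{c_0}(C_0+L)^2C_{ab}^2\Delta t^2$ which the hypothesis $\Delta t\le c_0/(4\sqrt 2\,L(C_0+L)C_{ab})$ renders dominated by $c_0/2$. Without it your argument either requires the unstated hypothesis $L<c_0$ or loses the quantitative form of the time-step restriction, so this is more than bookkeeping of constants. Relatedly, be careful in your final assembly: $A_N^{\boldsymbol\beta}\eta_r$ is only controlled in $H^{-1}$ by $\|\eta_r\|_{H^1}$, so you cannot pass from stage-error bounds to $\|\epsilon_{\ell+1}\|_{L^2}$ by a naive triangle inequality on $\Delta t\sum_r b^r_{RK}\Delta G_r$; you must stay inside the energy identity and pair $E_{0r}'$ against $H^1$ objects, as the paper does when it estimates $(\Delta t\sum_r b^r_{RK}E_{0r}',Q_1)$ through $\|E_{0r}'\|_{H^{-1}}\|Q_1\|_{H^1}$.
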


The proof of Theorem \ref{th:6-2} is in \ref{ap:4}. Combining the error estimation of the mapped Jacobi spectral method in Theorem \ref{th:6-1} and the IRK scheme in Theorem \ref{th:6-2}, we have:
\begin{equation}\label{eq:4-15}
    \begin{aligned}
        \big\|u(\cdot, t_{\ell + 1}) - U_{N_{\ell}, \gamma}^{\boldsymbol{\beta}_{\ell}, {\boldsymbol{x}_{0}}_{\ell}}(\cdot, t_{\ell + 1})\big\|_{L^{2}} &\le \exp\big((L - c_{0})\Delta t\big) \big\|u(\cdot, t_{\ell}) - U(\cdot, t_{\ell}) \big\|_{L^{2}}\\
        &\hspace{-1.1in}\quad + e_{\mathcal{J}}([t_{\ell}, t_{\ell + 1}])+ \big\|\Tilde{U}_{N_{\ell}, \gamma}^{\boldsymbol{\beta}_{\ell}, {\boldsymbol{x}_{0}}_{\ell}}(\cdot, t_{\ell + 1}) -U_{N_{\ell}, \gamma}^{\boldsymbol{\beta}_{\ell}, {\boldsymbol{x}_{0}}_{\ell}}(\cdot, t_{\ell + 1})\big\|_{L^{2}}\\
        &\hspace{-3.2cm}\le \exp\big((L - c_{0})\Delta t\big) \big\|u(\cdot, t_{\ell}) - U(\cdot, t_{\ell})\big\|_{L^{2}} + e_{\mathcal{J}}([t_{\ell}, t_{\ell + 1}]) + e_{RK}([t_{\ell}, t_{\ell + 1}]).
    \end{aligned}
\end{equation}
$e_{\mathcal{J}}([t_{\ell}, t_{\ell + 1}])$ and $e_{RK}([t_{\ell}, t_{\ell + 1}])$ are the mapped Jacobi approximation error bound and the IRK scheme error bound when advancing time from $t_{\ell}$ to $t_{\ell+1}$, respectively:
\begin{equation}
\begin{aligned}
    e_{\mathcal{J}}([t_{\ell}, t_{\ell + 1}]) &\coloneqq C_{\mathcal{M}}\exp(c_{\mathcal{J}}\Delta t)\big\|u - \pi_{N_{\ell},\gamma}^{\boldsymbol{\beta}_{\ell}, {\boldsymbol{x}_{0}}_{\ell}}u\big\|_{X(t_{\ell}, t_{\ell + 1})},\\
    e_{RK}([t_{\ell}, t_{\ell + 1}]) &\coloneqq C_{RK} \Delta t^{q + 1} \big\|\partial_{t}^{(q + 1)}\Tilde{U}_{N_{\ell}, \gamma}^{\boldsymbol{\beta}_{\ell}, {\boldsymbol{x}_{0}}_{\ell}} \big\|_{X(t_{\ell}, t_{\ell + 1})}.
\end{aligned}
\end{equation}

\subsection{Adaptive techniques error}\label{sec:3-3}
Finally, we analyze the error bound for adjusting the scaling factor $\boldsymbol{\beta}_{\ell}$, displacement ${\boldsymbol{x}_{0}}_{\ell}$, and the expansion order $N_{\ell}$ of the mapped Jacobi spectral expansion in Eq.~\eqref{eq:4-22}. We adopt the posterior estimation of the adaptive technique error of adjusting the parameters $\big(\boldsymbol{\beta}, N, \boldsymbol{x}_{0}\big)$ introduced in \cite{Xia2023}, which gives
\begin{equation}\label{eq:4-5}
    \big\|U_{N_{\ell}, \gamma}^{\boldsymbol{\beta}_{\ell}, {\boldsymbol{x}_{0}}_{\ell}}(\cdot, t_{\ell + 1}) - U_{N_{\ell + 1}, \gamma}^{\boldsymbol{\beta}_{\ell + 1}, {\boldsymbol{x}_{0}}_{\ell + 1}}(\cdot, t_{\ell + 1})\big\|_{L^{2}} \le e_{A}(t_{\ell + 1}),
\end{equation}
where
\begin{equation}
    e_{A}(t_{\ell + 1}) \coloneqq e_{m}(t_{\ell + 1}) + e_{s}(t_{\ell + 1}) + e_{c}(t_{\ell + 1}).
\end{equation}
Here, the moving error bound $e_{m}$, the scaling error bound $e_{s}$, and the coarsening error bound $e_{c}$ are given by:
\begin{equation}\label{adaerror}
\begin{aligned}
    e_{m}(t_{\ell + 1}) &\coloneqq \sum_{i = 1}^{d}\big|{x_{0}}_{\ell + 1, i} - {x_{0}}_{\ell, i}\big|\big\|\partial_{x_{i}}U_{N_{\ell}, \gamma}^{\boldsymbol{\beta}_{\ell}, {\boldsymbol{x}_{0}}_{\ell}}(\cdot, t_{\ell + 1})\big\|_{L^{2}},\\
    e_{s}(t_{\ell + 1}) &\coloneqq \sum_{i = 1}^{d} \left|1 - \frac{\beta_{\ell, i}}{\beta_{\ell + 1, i}}\right| \sqrt{\frac{\beta_{\ell + 1, i} + \beta_{\ell, i}}{2\beta_{\ell, i}}} \big\|x_{i}\partial_{x_{i}} U_{N_{\ell}, \gamma}^{{\boldsymbol{\beta}}_{\ell}, {\boldsymbol{x}_{0}}_{\ell + 1}}(\cdot, t_{\ell + 1})\big\|_{L^{2}},\\
    e_{c}(t_{\ell + 1}) &\coloneqq \big\|U_{N_{\ell}, \gamma}^{\boldsymbol{\beta}_{\ell + 1}, {\boldsymbol{x}_{0}}_{\ell + 1}}(\cdot, t_{\ell + 1}) - \pi_{N_{\ell + 1}, \gamma}^{\boldsymbol{\beta}_{\ell + 1}, {\boldsymbol{x}_{0}}_{\ell + 1}} U_{N_{\ell}, \gamma}^{\boldsymbol{\beta}_{\ell + 1}, {\boldsymbol{x}_{0}}_{\ell + 1}}(\cdot, t_{\ell + 1})\big\|_{L^{2}}.
\end{aligned}
\end{equation}
$\beta_{\ell, i}$ and ${x_{0}}_{\ell, i}$ denote the $i^{\text{th}}$ component of $\boldsymbol{\beta}_{\ell}$ and ${\boldsymbol{x}_{0}}_{\ell}$, respectively. Invoking the inverse inequalities (Lemma \ref{th:5-1}), we have
\begin{equation}
\begin{aligned}
    e_{m}(t_{\ell + 1}) &\le \sum_{i = 1}^{d}\big|{x_{0}}_{\ell + 1, i} - {x_{0}}_{\ell, i}\big| \sqrt{\beta_{\ell, i}^{3}} N_{\ell, \alpha, r}^{1/2}(t_{\ell}) \|U_{N_{\ell}, \gamma}^{{\boldsymbol{\beta}}_{\ell}, {\boldsymbol{x}_{0}}_{\ell}}(\cdot, t_{\ell + 1})\|_{L^{2}},\\
    e_{s}(t_{\ell + 1}) &\le \sum_{i = 1}^{d} \left|1 - \frac{\beta_{\ell, i}}{\beta_{\ell + 1, i}}\right| \sqrt{\frac{\beta_{\ell + 1, i} + \beta_{\ell, i}}{2}}N_{\ell, \alpha, r}^{1/2} \|U_{N_{\ell}, \gamma}^{{\boldsymbol{\beta}}_{\ell}, {\boldsymbol{x}_{0}}_{\ell}}(\cdot, t_{\ell + 1})\|_{L^{2}}.
\end{aligned}
\end{equation}
Here, $N_{\ell, \alpha, r}\coloneqq 2N_{\ell}(N_{\ell} + \alpha_{1} + \alpha_{2} + 1) + 2(1 + \alpha_{1} + \alpha_{2} + r/2)^{2}$.
Specifically, if $\boldsymbol{\beta}_{\ell} = \boldsymbol{\beta}_{\ell + 1}$, then $e_s(t_{\ell+1})=0$; if ${\boldsymbol{x}_{0}}_{\ell} = {\boldsymbol{x}_{0}}_{\ell + 1}$, then $e_m(t_{\ell+1})=0$; if $N_{\ell}\geq N_{\ell+1}$, then $e_c(t_{\ell+1})=0$.

Finally, by combining Eqs.~\eqref{eq:4-15} and~\eqref{eq:4-5}, the single-step error bound of the AHMJ method and the IRK scheme can be obtained:
\begin{equation}\label{eq:4-13}
    \begin{aligned}
        E(t_{\ell + 1}) \le \exp\big((L - c_{0})\Delta t\big) E(t_{\ell}) +  e_{\mathcal{J}}([t_{\ell}, t_{\ell + 1}]) + e_{RK}([t_{\ell}, t_{\ell + 1}]) + e_{A}(t_{\ell + 1}),
    \end{aligned}
\end{equation}
where $E(t_{\ell}) \coloneqq \|u(\cdot, t_{\ell}) - U_{N_{\ell}, \gamma}^{\boldsymbol{\beta}_{\ell}, {\boldsymbol{x}_{0}}_{\ell}}(\cdot, t_{\ell})\|_{L^{2}}$.
By iterating the single-time-step error bound in Eq.~\eqref{eq:4-13} from $t_{0} = 0$ to $t_{K} = T$, we give the error analysis in Theorem \ref{th:2-4}.

\begin{theorem}[Restated Theorem \ref{th:1-1}]\label{th:2-4}
Let $U_{N, \gamma}^{\boldsymbol{\beta}, \boldsymbol{x}_{0}}(\boldsymbol{x}, t)$ be the numerical solution of the AHMJ method and the implicit Runge-
Kutta scheme in Eq.~\eqref{eq:3-16}, then
\begin{equation}\label{eq:4-6}
    \begin{aligned}
        \big\|u(\cdot, T) - U_{N_{K}, \gamma}^{\boldsymbol{\beta}_{K}, {\boldsymbol{x}_{0}}_{K}}(\cdot, T)\big\|_{L^{2}}
        \le E_{\mathcal{J}}(T) + E_{RK}(T) + E_{A}(T).
    \end{aligned}
\end{equation}
The mapped Jacobi approximation error $E_{\mathcal{J}}$, the IRK scheme error $E_{RK}$ and the adaptive techniques error $E_{A}$ are given by
\begin{equation}
    E_{\mathcal{J}}(T) \coloneqq C_{\mathcal{M}}\exp(c_{\mathcal{J}}\Delta t)\sum_{\ell = 1}^{K}\Big( \exp\big((L - c_{0}) (T - t_{\ell})\big) \cdot \big\|u - \pi_{N_{\ell},\gamma}^{\boldsymbol{\beta}_{\ell}, {\boldsymbol{x}_{0}}_{\ell}}u\big\|_{X(t_{\ell - 1}, t_{\ell})}\Big),
\end{equation}
\begin{equation}
    E_{RK}(T) \coloneqq C_{RK} \Delta t^{q + 1}\sum_{\ell = 1}^{K} \Big(\exp\big((L - c_{0}) (T - t_{\ell})\big) \cdot \big\|\partial_{t}^{(q + 1)}\Tilde{U}_{N_{\ell}, \gamma}^{\boldsymbol{\beta}_{\ell}, {\boldsymbol{x}_{0}}_{\ell}}\big\|_{X(t_{\ell - 1}, t_{\ell})}\Big),
\end{equation}
\begin{equation}
    E_{A}(T) \coloneqq \sum_{\ell = 1}^{K} \Big(\exp\big((L - c_{0}) (T - t_{\ell})\big) \cdot \big(e_{m}(t_{\ell}) + e_{s}(t_{\ell}) + e_{c}(t_{\ell})\big)\Big),
\end{equation}
where $e_m$, $e_{s}$ and $e_{c}$ are defined in Eqs. \eqref{adaerror}.
\end{theorem}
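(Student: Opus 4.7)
The plan is to obtain Eq.~\eqref{eq:4-6} by iterating the single-step estimate in Eq.~\eqref{eq:4-13}, which has already been assembled from Theorem~\ref{th:6-1} (continuous-time mapped Jacobi error), Theorem~\ref{th:6-2} (IRK scheme error), and the posterior adaptive-technique bound in Eq.~\eqref{eq:4-5}. All the analytical work has been done upstream, so what remains is a discrete Gr\"onwall/unfolding of a first-order linear recursion with forcing.

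Writing $\rho \coloneqq \exp\big((L-c_{0})\Delta t\big)$ and $\Delta_{\ell} \coloneqq e_{\mathcal{J}}([t_{\ell}, t_{\ell + 1}]) + e_{RK}([t_{\ell}, t_{\ell + 1}]) + e_{A}(t_{\ell + 1})$, Eq.~\eqref{eq:4-13} reads $E(t_{\ell + 1}) \le \rho E(t_{\ell}) + \Delta_{\ell}$. A straightforward induction on $\ell$ gives $E(t_{K}) \le \rho^{K} E(t_{0}) + \sum_{\ell = 0}^{K - 1} \rho^{K - 1 - \ell}\Delta_{\ell}$. Because $\rho^{K - 1 - \ell} = \exp\big((L - c_{0})(K - 1 - \ell)\Delta t\big) = \exp\big((L - c_{0})(T - t_{\ell + 1})\big)$, the index shift $\ell \mapsto \ell - 1$ recasts this sum as $\sum_{\ell = 1}^{K}\exp\big((L - c_{0})(T - t_{\ell})\big)\Delta_{\ell - 1}$, which is precisely the form appearing on the right-hand side of Eq.~\eqref{eq:4-6}.

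Next I would split this sum into three pieces according to the three forcing contributions: isolating the $e_{\mathcal{J}}([t_{\ell - 1}, t_{\ell}])$ terms reproduces $E_{\mathcal{J}}(T)$, the $e_{RK}([t_{\ell - 1}, t_{\ell}])$ terms reproduce $E_{RK}(T)$, and the $e_{A}(t_{\ell})$ terms reproduce $E_{A}(T)$, exactly matching the definitions in the theorem statement. The residual term $\rho^{K}E(t_{0})$ is controlled by noting that the numerical scheme is initialized by the hyperbolic-cross projection of $u_{0}$, so $E(t_{0}) = \|u_{0} - \pi_{N_{0}, \gamma}^{\boldsymbol{\beta}_{0}, {\boldsymbol{x}_{0}}_{0}}u_{0}\|_{L^{2}}$; this initial mismatch may either be absorbed into the $\ell = 1$ contribution of $E_{\mathcal{J}}$ or treated as part of the approximation-error term via the $X(t_{0}, t_{1})$ norm that already controls its trace at $t_{0}$.

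I expect the main obstacle to be purely bookkeeping rather than mathematical: aligning the two indexing conventions so that the exponential weight $\exp\big((L - c_{0})(T - t_{\ell})\big)$ attaches to the correct time interval $(t_{\ell - 1}, t_{\ell})$ used in the definitions of $E_{\mathcal{J}}$ and $E_{RK}$, and ensuring that the initial-data contribution is consistently accounted for so no boundary term is silently dropped. Once the index shift and the initial-step treatment are fixed, the remainder reduces to the standard telescoping argument sketched above.
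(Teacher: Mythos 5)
Your proposal is correct and follows essentially the same route as the paper, which likewise obtains Eq.~\eqref{eq:4-6} by unfolding the single-step recursion Eq.~\eqref{eq:4-13} with ratio $\exp\big((L-c_{0})\Delta t\big)$, identifying $\exp\big((L-c_{0})(K-1-\ell)\Delta t\big)=\exp\big((L-c_{0})(T-t_{\ell+1})\big)$, and regrouping the forcing terms into $E_{\mathcal{J}}$, $E_{RK}$, and $E_{A}$. Your explicit handling of the index shift and of the residual initial term $\rho^{K}E(t_{0})$ (absorbed into the $\ell=1$ approximation contribution through the trace part of the $X(t_{0},t_{1})$ norm) is, if anything, more careful than the paper's one-line justification.
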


Theorem \ref{th:2-4} gives the upper error bound when using the AHMJ method to solve the spatiotemporal integrodifferential equation Eq.~\eqref{eq:2-2}. Theorem \ref{th:2-4} greatly extends error analysis of using adaptive spectral methods in \cite[Theorem1]{Xia2023} on solving linear equations to solving a class of spatiotemporal integrodifferential equations in unbounded domains. Specifically,
the mapped Jacobi approximation error bound $E_{\mathcal{J}}$ only depends on the spectral expansion approximation to the analytical solution $u(\boldsymbol{x}, t)$, and the adaptive technique error bound $E_{A}$ depends on the implementing the adaptive techniques for the sparse spectral expansion. The application of the IRK scheme does not influence these error bounds $E_{\mathcal{J}}(T)$ and $E_A(T)$. Additionally, the error bound for using the IRK scheme $E_{RK}$ only depends on the high-order temporal derivative of the mapped Jacobi approximation $\Tilde{U}_{N, \gamma}^{\boldsymbol{\beta}, \boldsymbol{x}_{0}}(\boldsymbol{x}, t)$. Through the error analysis in Theorem~\ref{th:2-4}, we can control the error of the AHMJ method by separately analyzing and controlling the three error bounds $E_{\mathcal{J}}$, $E_{A}$, and $E_{RK}$.

\section{Numerical Results}\label{sec:4}
In this section, we first present the AHMJ method. To be specific, we introduce two hyperbolic-cross-space frequency indicators ($\mathcal{F}_{x_{i}}$ and $\mathcal{F}_{p}$) tailored for hyperbolic cross space to properly adjust the scaling factors in each dimension and adjust the expansion order $N$ of the hyperbolic cross space $V_{N, \gamma}^{\boldsymbol{\beta}, \boldsymbol{x}_{0}}$.

First, for a hyperbolic-cross-space mapped Jacobi spectral expansion $U_{N, \gamma}^{\boldsymbol{\beta}, \boldsymbol{x}_{0}} \in V_{N, \gamma}^{\boldsymbol{\beta}, \boldsymbol{x}_{0}}$, we define the hyperbolic-cross-space frequency indicator in the $i^{\text{th}}$ dimension as
\begin{equation}
    \mathcal{F}_{x_{i}}(U_{N, \gamma}^{\boldsymbol{\beta}, \boldsymbol{x}_{0}}) \coloneqq \frac{\big\|U_{N, \gamma}^{\boldsymbol{\beta}, \boldsymbol{x}_{0}} - \pi_{N, \gamma, i}^{\boldsymbol{\beta}, \boldsymbol{x}_{0}}U_{N, \gamma}^{\boldsymbol{\beta}, \boldsymbol{x}_{0}}\big\|_{L^{2}}}{\|U_{N, \gamma}^{\boldsymbol{\beta}, \boldsymbol{x}_{0}}\|_{L^{2}}},\quad \forall i \in \{1, \cdots, d\}.
    \label{scaling_hyper}
\end{equation}
Here, $\pi_{N, \gamma, i}^{\boldsymbol{\beta}, \boldsymbol{x}_{0}}$ denotes the projection operator onto the space spanned by basis functions whose indices fall into the following index set
\begin{equation}
    \Upsilon_{N, \gamma, i} \coloneqq \Big\{\boldsymbol{n} :\big|\big(n_{1}, \cdots, \tfrac{3}{2}n_{i},\cdots, n_{d}\big)\big|_{\text{mix}} \cdot \big|\big(n_{1}, \cdots, \tfrac{3}{2}n_{i},\cdots, n_{d}\big)\big|_{\infty}^{-\gamma} \le N^{1 - \gamma}\Big\},
\end{equation}
$(n_{1}, \cdots, \tfrac{3}{2}n_{i},\cdots, n_{d})$ indicates that the $i^{\text{th}}$ component of $\boldsymbol{n}$ is multiplied by a factor $\tfrac{3}{2}$ (following the common $\tfrac{2}{3}$-rule \cite{Hou2007, Xia2020}). $\mathcal{F}_{x_i}$ thus measures the high-frequency components in the $i^{\text{th}}$ direction of $U_{N, \gamma}^{\boldsymbol{\beta}, \boldsymbol{x}_{0}}$. $\mathcal{F}_{x_i}$ can help us adjust $\beta_i$, the scaling factor in the $i^{\text{th}}$ dimension.

Next, we define the hyperbolic-cross-space expansion order frequency indicator $\mathcal{F}_{p}$ for $U_{N, \gamma}^{\boldsymbol{\beta}, \boldsymbol{x}_{0}} \in V_{N, \gamma}^{\boldsymbol{\beta}, \boldsymbol{x}_{0}}$
\begin{equation}\label{p_hyper}
    \mathcal{F}_{p}(U_{N, \gamma}^{\boldsymbol{\beta}, \boldsymbol{x}_{0}}) \coloneqq \frac{\|U_{N, \gamma}^{\boldsymbol{\beta}, \boldsymbol{x}_{0}} - \pi_{N, \gamma , p}^{\boldsymbol{\beta}, \boldsymbol{x}_{0}}U_{N, \gamma}^{\boldsymbol{\beta}, \boldsymbol{x}_{0}}\|_{L^{2}}}{\|U_{N, \gamma}^{\boldsymbol{\beta}, \boldsymbol{x}_{0}}\|_{L^{2}}}.
\end{equation}
Here, $\pi_{N, \gamma, p}^{\boldsymbol{\beta}, \boldsymbol{x}_{0}}$ denotes the projection operator onto the space spanned by basis functions whose indices fall into the following index set
\begin{equation}
   \Upsilon_{N, \gamma, p} \coloneqq \Big\{\boldsymbol{n} :|\boldsymbol{n}|_{\text{mix}}\cdot |\boldsymbol{n}|_{\infty}^{-\gamma} \le \big(\tfrac{2}{3}N\big)^{1 - \gamma}\Big\}.
\end{equation}
$\mathcal{F}_p$ measures the overall high-frequency components in $U_{N, \gamma}^{\boldsymbol{\beta}, \boldsymbol{x}_{0}}$. For one-dimensional spectral expansions, the hyperbolic-cross-space frequency indicators $\mathcal{F}_{x}$ and $\mathcal{F}_{p}$ coincide with the frequency indicators introduced in \cite{Xia2020, Xia2020(1)}. 

Previous adaptive techniques for applying Hermite functions to solve multiple-dimensional spatiotemporal equations 
use a ``direct truncation" strategy \cite{Xia2022} to calculate the direct-truncation-strategy frequency indicators in the $i^{\text{th}}$ direction, and such strategies are mainly applied to the case of using the full-tensor-product spectral expansions instead of the sparse spectral expansions. As a comparison, we implement previous adaptive techniques for our sparse mapped Jacobi functions to solve multidimensional 
spatiotemporal equations, which we refer to as the ADMJ method. Specifically, the ADMJ method calculates the direct-truncation-strategy frequency indicator in the $i^{\text{th}}$ direction for adjusting $\beta_i$ using the following formula
%\added{As a comparison, we shall implement an adaptive direct truncation mapped Jacobi (ADMJ) method. This involves setting the expansion coefficients that correspond to the complement of the \added{hyperbolic-cross-space} index set \cite{Shen2010} to zero and utilizing the full-tensor-based frequency indicators \cite{Xia2020, Xia2020(1), Xia2022} to calculate the direct truncation  defined as}
\begin{equation}\label{c_scale}
    \Tilde{\mathcal{F}}_{x_i}(U_{N, \gamma}^{\boldsymbol{\beta}, \boldsymbol{x}_{0}}) \coloneqq \frac{\|U_{N, \gamma}^{\boldsymbol{\beta}, \boldsymbol{x}_{0}} - \Tilde{\pi}_{N, \gamma, i}^{\boldsymbol{\beta}, \boldsymbol{x}_{0}}U_{N, \gamma}^{\boldsymbol{\beta}, \boldsymbol{x}_{0}}\|_{L^{2}}}{\|U_{N, \gamma}^{\boldsymbol{\beta}, \boldsymbol{x}_{0}}\|_{L^{2}}},\quad \forall i \in \{1, \cdots, d\}.
\end{equation}
Here, $\Tilde{\pi}_{N, \gamma, i}^{\boldsymbol{\beta}, \boldsymbol{x}_{0}}$ denotes the projection operator onto the space spanned by basis functions whose indices fall into the following index set
\begin{equation}
    \Tilde{\Upsilon}_{N, \gamma, i} \coloneqq \Big\{\boldsymbol{n} \in \Upsilon_{N, \gamma} : n_{i} \le \tfrac{2}{3}N\Big\}.
\end{equation}
The direct-truncation-strategy expansion order frequency indicator $\Tilde{\mathcal{F}}_p$ for adjusting the expansion order $N$ is defined as
\begin{equation}\label{cp}
    \Tilde{\mathcal{F}}_{p}(U_{N, \gamma}^{\boldsymbol{\beta}, \boldsymbol{x}_{0}}) \coloneqq \frac{\|U_{N, \gamma}^{\boldsymbol{\beta}, \boldsymbol{x}_{0}}  - \Tilde{\pi}_{N, \gamma, p}^{\boldsymbol{\beta}, \boldsymbol{x}_{0}}U_{N, \gamma}^{\boldsymbol{\beta}, \boldsymbol{x}_{0}}\|_{L^{2}}}{\|U_{N, \gamma}^{\boldsymbol{\beta}, \boldsymbol{x}_{0}}\|_{L^{2}}}.
\end{equation}
Here, $\Tilde{\pi}_{N, \gamma, p}^{\boldsymbol{\beta}, \boldsymbol{x}_{0}}$ denotes the projection operator onto the space spanned by basis functions whose indices fall into the following index set 
\begin{equation}\label{dp}
    \Tilde{\Upsilon}_{N, \gamma, p} \coloneqq \Big\{\boldsymbol{n} \in \Upsilon_{N, \gamma} : n_{i} \le \tfrac{2}{3}N,~ \forall i = \{1, \cdots, d\}\Big\}.
\end{equation}

We plot the basis functions that we use to calculate the hyperbolic-cross-space frequency indicators $\mathcal{F}_{x_i}$ and $\mathcal{F}_p$ in Fig.~\ref{fig:ils} (a, b, c). Additionally, we plot the basis functions that we use to calculate the direct-truncation-strategy frequency indicators $\tilde{\mathcal{F}}_{x_i}$ and $\tilde{\mathcal{F}}_p$ in Fig.~\ref{fig:ils} (d, e, f). %For multidimensional mapped Jacobi spectral expansions, 
When $\gamma = -\infty$ in the index set $\Upsilon_{N, \gamma}$, our hyperbolic-space frequency indicators $\mathcal{F}_{x_i}$ and $\mathcal{F}_p$ coincide with the direct-truncation-strategy frequency indicators $\tilde{\mathcal{F}}_{x_i}$ and $\tilde{\mathcal{F}}_p$. When $\gamma>\infty$, compared to our hyperbolic-cross-space frequency indicators, the direct-truncation-strategy frequency indicators $\tilde{\mathcal{F}}_{x_i}$ and $\tilde{\mathcal{F}}_p$ fail to take into account the features of the hyperbolic cross space and use fewer basis functions in the calculation of the numerators in Eqs.~\eqref{c_scale} and ~\eqref{cp}. In Examples~\ref{example4} and \ref{example5}, we shall show that our AHMJ method is more robust and efficient than the ADMJ method.
\begin{figure}[t]\label{fig:ils}
    \centering
    \includegraphics[width = 5in]{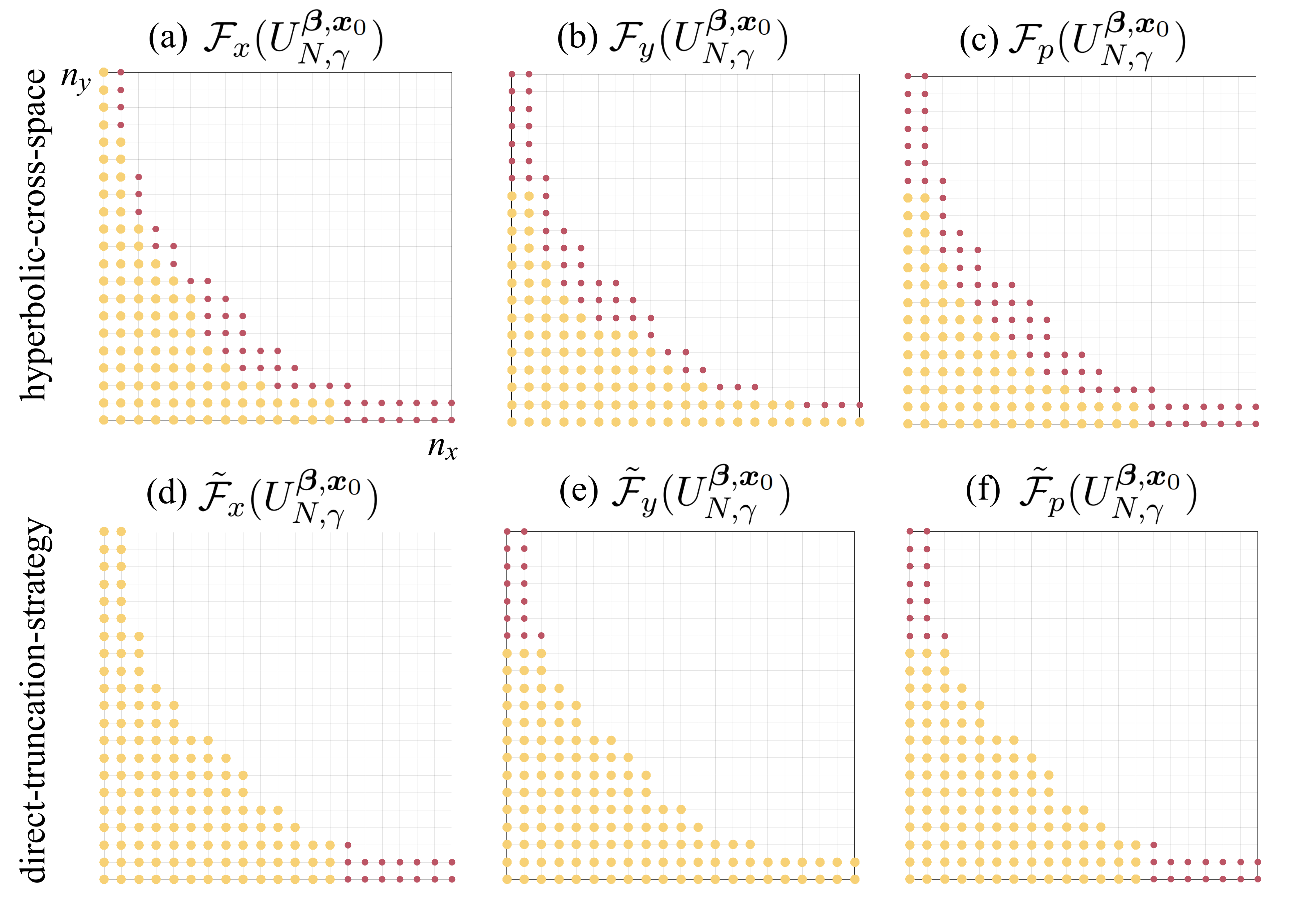}
    \caption{\textnormal{\footnotesize{(a, b, c) The basis functions that are used to our proposed hyperbolic-cross-space frequency indicators $\mathcal{F}_x, \mathcal{F}_y$, and $\mathcal{F}_p$ defined in Eqs~\eqref{scaling_hyper} and \eqref{p_hyper}. The red dots are the indices of the mapped Jacobi basis functions used in the calculation of the numerators of Eq.~\eqref{scaling_hyper} and \eqref{p_hyper}. The red and yellow dots are the indices of the mapped Jacobi basis functions used in the calculation of the denominators of Eqs.~\eqref{scaling_hyper} and \eqref{p_hyper}. (d, e, f) The basis functions used to calculate the direct-truncation-strategy frequency indicators $\tilde{\mathcal{F}}_x, \tilde{\mathcal{F}}_y$, and $\tilde{\mathcal{F}}_p$ defined in Eqs~\eqref{c_scale} and \eqref{cp}. The red dots are the indices of the mapped Jacobi basis functions used in the calculation of the numerators of Eq.~\eqref{c_scale}, and \eqref{cp}. The red and yellow dots are the indices of the mapped Jacobi basis functions used in the calculation of the denominators of Eq.~\eqref{c_scale}, and \eqref{cp}. Here, we take $N=20, \gamma=-1$ for the hyperbolic space $V_{N, \gamma}^{\boldsymbol{\beta}, \boldsymbol{x}_0}$.}}}
\end{figure}

%\added{Furthermore, by setting $\gamma \to -\infty$, the AHMJ method is the same as the ADMJ method.}

Finally, for both the ADMJ method and the AHMJ method, the exterior-error indicators for adjusting the displacement ${x_0}_i$ for the basis functions in the $i^{\text{th}}$ direction are calculated in the same way as the exterior-error indicators \cite{Xia2020}:
\begin{equation}\label{exte}
    \begin{aligned}
        \mathcal{E}_{L}^{x_{i}}(U_{N, \gamma}^{\boldsymbol{\beta}, \boldsymbol{x}_{0}}) &\coloneqq \frac{\big\|\partial_{x_{i}}U_{N, \gamma}^{\boldsymbol{\beta}, \boldsymbol{x}_{0}}\cdot \mathbb{I}_{\mathbb{R} \times \cdots \times (-\infty, x_{i, L}) \times \cdots \times \mathbb{R}}\big\|_{L^{2}}}{\big\|\partial_{x_{i}}U_{N, \gamma}^{\boldsymbol{\beta}, \boldsymbol{x}_{0}}\big\|_{L^{2}}},\\
        \mathcal{E}_{R}^{x_{i}}(U_{N, \gamma}^{\boldsymbol{\beta}, \boldsymbol{x}_{0}}) &\coloneqq \frac{\big\|\partial_{x_{i}}U_{N, \gamma}^{\boldsymbol{\beta}, \boldsymbol{x}_{0}}\cdot \mathbb{I}_{\mathbb{R} \times \cdots \times (x_{i, R}, \infty) \times \cdots \times \mathbb{R}}\big\|_{L^{2}}}{\big\|\partial_{x_{i}}U_{N, \gamma}^{\boldsymbol{\beta}, \boldsymbol{x}_{0}}\big\|_{L^{2}}},
    \end{aligned}
\end{equation}
where $\mathbb{I}_{A}$ denotes the characteristic function of the set $A$. $x_{i, L} = x^{\beta_{i}, {x_{0}}_{i}}_{[\frac{N}{3}]}$ and $x_{i, R} = x^{\beta_{i}, {x_{0}}_{i}}_{[\frac{2N + 2}{3}]}$ are the $[\frac{N}{3}]^{\text{th}}$ and $[\frac{2N + 2}{3}]^{\text{th}}$ node of the quadrature nodes $\{x^{\boldsymbol{\beta}, \boldsymbol{x}_{0}}_{n}\}_{n = 0}^{N}$ in the $x_{i}$ direction. Hyperparameters for implementing the adaptive spectral methods and the details of the scaling, moving, and $p$-adaptive techniques (similar to the implementation of adaptive techniques in \cite{Xia2023}) are given in \ref{appendix:2}.

The spatial fractional Laplacian operator $(-\Delta)^{\alpha/2}$ defined as the following singular integral \cite{Tang2019} will be often used in our numerical examples,
\begin{equation}
    (-\Delta)^{s/2} u(\boldsymbol{x}) \coloneqq C_{d, s} \operatorname{p.v.}\int_{\mathbb{R}^{d}} \frac{u(\boldsymbol{x}) - u(\boldsymbol{y})}{|\boldsymbol{x} - \boldsymbol{y}|^{s}}\text{d}\boldsymbol{y}, \,\, C_{d,s} = \frac{\alpha 2^{s - 1} \Gamma(\frac{s + d}{2})}{\pi^{d/2} \Gamma(\frac{2 - s}{2})}.
\end{equation}
Here, p.v. stands for the Cauchy principal value. In \cite{Sheng2019}, an efficient method for computing $\big((-\Delta)^{s/2}\mathcal{J}_{\boldsymbol{m}}^{\boldsymbol{\beta}, \boldsymbol{x}_{0}}, \mathcal{J}_{\boldsymbol{n}}^{\boldsymbol{\beta}, \boldsymbol{x}_{0}}\big)$ is given. In Examples~\ref{example1},~\ref{example3},~\ref{example4},and~\ref{example5}, we use the modified mapped Chebyshev functions ($\alpha_{1} = \alpha_{2} = -\tfrac{1}{2}$). In Example~\ref{example2}, we use the modified mapped Legendre functions ($\alpha_{1} = \alpha_{2} = 0$). 
In this study, the error denotes the following relative $L^2$ error
\begin{equation}
    e(t) \coloneqq \frac{E(t)}{\|u(\cdot, t)\|_{L^{2}}}=\frac{\big\|u(\cdot, t) - U_{N,\gamma}^{\boldsymbol{\beta}, \boldsymbol{x}_{0}}(\cdot, t)\big\|_{L^{2}}}{\|u(\cdot, t)\|_{L^{2}}}.
\end{equation}

We use a fourth-order IRK scheme and a timestep $\Delta t = 0.1$ in all examples. The IRK scheme is solved based on the Newton method in \cite{Southworth2022}. %\added{Besides, the source terms of each equation are provided in \ref{appendix:1}}.

First, we compare the performance of the AHMJ method versus the adaptive Hermite method \cite{Xia2023} for solving a 1D spatiotemporal integrodifferential equation where the solution exhibits algebraic decay at infinity.

\begin{exam}\label{example1}\rm
Consider the following 1D fractional reaction-diffusion equation
\begin{equation}\label{exam:1}
    \begin{aligned}
        \partial_{t}u + (-\Delta)^{1/2} u + u (1 - u^{2}) = f(x, t),\,\,
        u(x, 0) = \frac{1}{(1 + x^{2})^{6}},
    \end{aligned}
\end{equation}
where $f(x, t)$ is the source term given in Eq. \eqref{f1}. Eq.~\eqref{exam:1} admits an analytical solution 
\begin{equation}\label{exam:11}
    u(x, t) = \frac{(1 + t)^{12}}{\big((1 + t)^{2} + x^{2}\big)^{6}}.
\end{equation}

\begin{figure}[h]
    \centering
    \includegraphics[width = 5in]{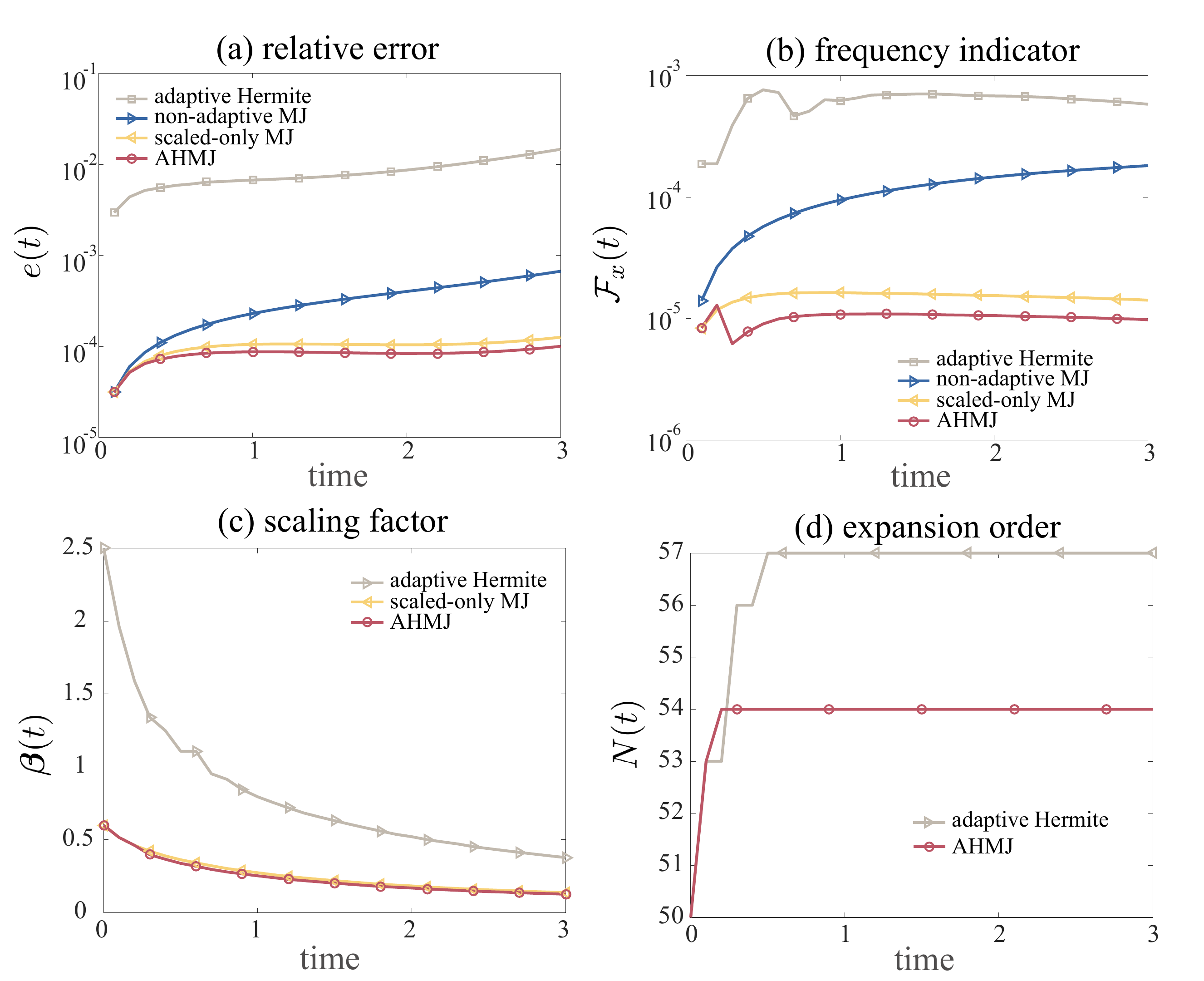}
    \caption{\textnormal{\footnotesize{(a) The errors of the non-adaptive mapped Jacobi method, the scaled-only mapped Jacobi method, and the AHMJ method as well as the adaptive Hermite method. (b) The frequency indicator of the adaptive Hermite method, the frequency indicators of the non-adaptive mapped Jacobi method, the scaled-only mapped Jacobi method, and the AHMJ method (c) The scaling factor $\beta$ of the scaled-only mapped Jacobi method, and the AHMJ method as well as the adaptive Hermite method. (d) The expansion order of the AHMJ method and the adaptive Hermite method.}}}\label{fig:1}
\end{figure}

We set $r = 1$ in Eq. \eqref{eq:1-3} so that the mapped Jacobi basis functions $\mathcal{J}_{n}^{\beta, x_{0}}$ decay algebraically at a rate of $|\beta x|^{-1}$ at infinity \cite{Shen2013}. The initial scaling factor $\beta = 0.6$ for the mapped Jacobi methods, and $\beta = 2.5$ for the Hermite method. 
We set the initial expansion order $N = 50$ and the initial displacement $x_{0} = 0$. 

As shown in Fig. \ref{fig:1} (a), the AHMJ method outperforms the adaptive Hermite method. This is because the algebraic decaying property of the mapped Jacobi functions matches the algebraic decay of the analytical solution Eq. \eqref{exam:11} as $|x|\rightarrow\infty$. On the other hand, the Hermite functions decay at a rate of $\exp(-\beta^2x^2/2)$ at infinity, and thus they cannot capture the solution's behavior when $|x|$ is large. Also, from Fig. \ref{fig:1} (a), our AHMJ method gives a much more accurate numerical solution than using a fixed scaling factor, which verifies the effectiveness of the scaling technique. From Fig. \ref{fig:1} (b), the frequency indicators of the AHMJ method and the scaled-only mapped Jacobi method (in this manuscript, "scaled-only" refers to the AHMJ method with the $p$-adaptive technique deactivated by disallowing the expansion order $N$ to increase or decrease) are well controlled as a result of properly adjusting the scaling factors (shown in Fig.~\ref{fig:1} (c)). The analytical solution Eq. \eqref{exam:1} does not become more or less oscillatory over time, and the expansion order $N$ for the ADHJ method remains almost unchanged (shown in Fig. \ref{fig:1} (d)) because the $p$-adaptive technique is rarely activated. However, the expansion order $N$ of the adaptive Hermite method is activated when $t$ is small which may be because the adaptive Hermite method cannot maintain a small frequency indicator only by scaling when $t$ is small (Fig. \ref{fig:1} (b)).
\end{exam}

In the next example, we further compare the performance of the AHMJ method with the adaptive Hermite method \cite{Xia2023} in solving a Keller-Segel equation that describes the dynamics of insect swarms in \cite{Grindrod, Carrillo2014}.
\begin{exam}\label{example2}\rm
Consider the following Keller-Segel equation
\begin{equation}\label{exam:4}
    \begin{aligned}
        \partial_{t}u + 2\cdot\nabla u - \frac{1}{2}\Delta u + \nabla\cdot\big(u \nabla (|x|\ast u)\big) = 0,\,\,
        u(x, 0) = \frac{1}{8}\cosh^{-2}\left(\frac{x}{4}\right),\\
    \end{aligned}
\end{equation}
which admits an analytical solution:
\begin{equation}\label{exam:42}
    u(x, t) = \frac{1}{8}\cosh^{-2}\left(\frac{x - 2t}{4}\right).
\end{equation}
\begin{figure}[h]
    \centering
    \includegraphics[width = 5in]{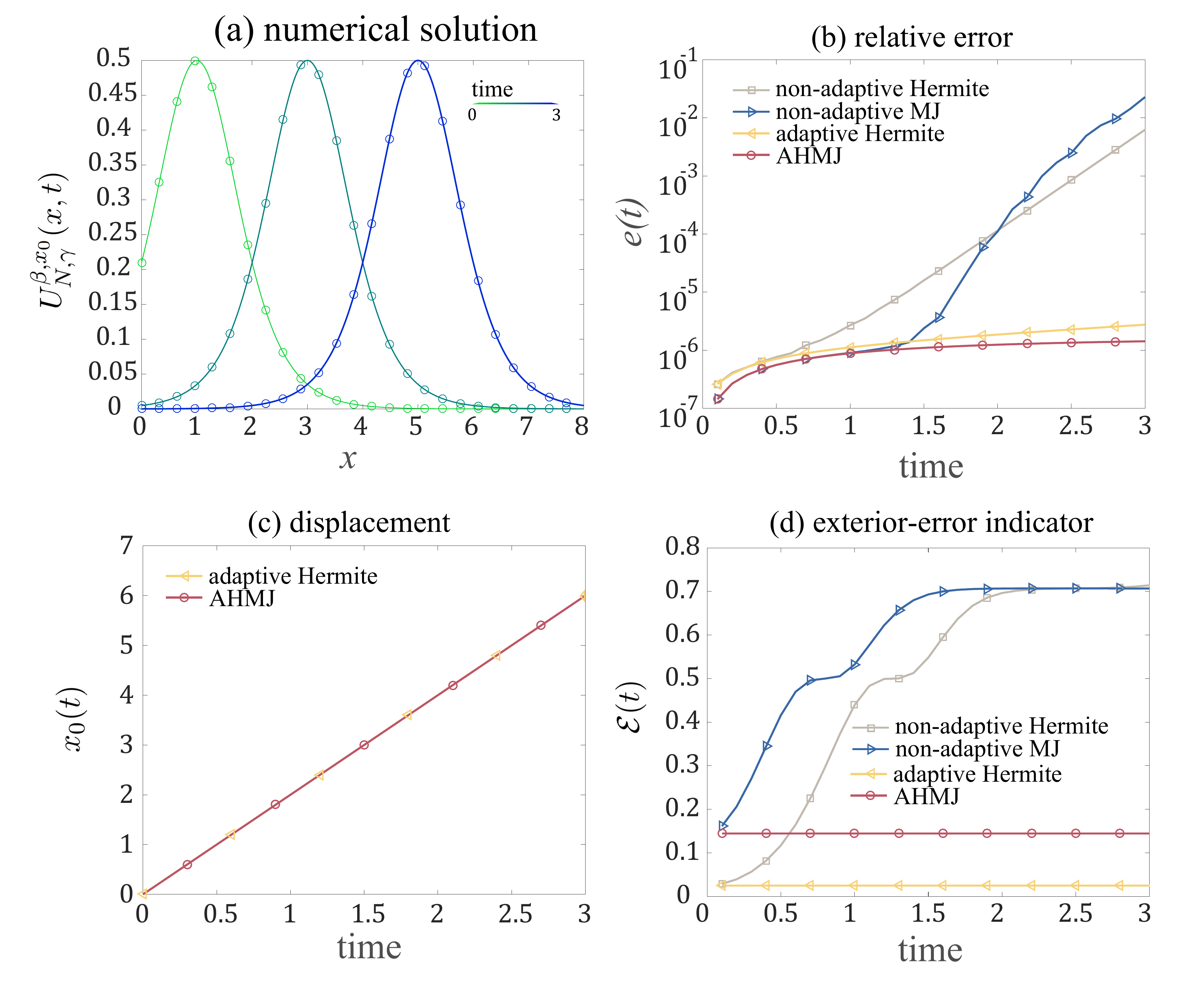}
    \caption{\textnormal{\footnotesize{(a) The analytical solution, which translates rightward over time. (b) The errors of the AHMJ, the non-adaptive mapped Jacobi, the adaptive Hermite, and the non-adaptive Hermite methods. (c) The displacement of the basis function for the AHMJ method as well as the displacement for the adaptive Hermite method. (d) The exterior-error indicators of the AHMJ, the non-adaptive mapped Jacobi, the adaptive Hermite, and the non-adaptive Hermite methods. The exterior-error indicator of the AHMJ method and the exterior-error indicator of the adaptive Hermite method are well controlled.}}}
    \label{fig:4}
\end{figure}

The solution of Eq.~\eqref{exam:42} decays exponentially at infinity. Therefore, we set $r = 1$ in Eq. \eqref{eq:1-3} such that the mapped Jacobi basis functions $\mathcal{J}_{n}^{\beta, x_{0}}$ decay at a rate of $\exp(-|\beta x|)$ to numerically solve Eq.~\eqref{exam:4}. As a comparison, we also apply the adaptive Hermive method \cite{Xia2023} to solve Eq.~\eqref{exam:4}. The initial scaling factor $\beta = 0.4$ for the mapped Jacobi methods, and the initial scaling factor $\beta = 1.2$ for the Hermite methods. We set the initial expansion order $N = 50$ and the initial displacement $x_{0} = 0$ for the mapped Jacobi and Hermite methods. Also, the analytical solution corresponds to a pulse moving rightward with constant speed over time (Fig.~\ref{fig:4} (a)), requiring properly adjusting the displacement $x_0$ of the mapped Jacobi spectral expansion. 

The AHMJ method can achieve high accuracy compared to the non-adaptive Hermite method and the non-adaptive mapped Jacobi methods (Fig.~\ref{fig:4} (b)). Failure to adjust $x_0$ will lead to a large right exterior-error indicator (shown in Fig.~\ref{fig:4} (c)), indicating a large error for the spectral expansion approximation as $x\rightarrow\infty$. For both the adaptive Hermite and the AHMJ method, they can accurately capture the change in the displacement $x_0$ (Fig.~\ref{fig:4} (d)), again verifying the effectiveness of the moving technique. From Eq.~\eqref{exam:42}, the solution is decaying at a rate of $\exp(-|x|/2)$ when $x\rightarrow\pm\infty$. As the Hermite functions vanish faster than the analytic solution at infinity,
% it cannot accurately capture the analytic solution's decaying behavior when $|x|$ is large. Therefore, 
the error of the adaptive Hermite method is slightly larger than the error of the AHMJ method (Fig.~\ref{fig:4} (b)).
\end{exam}

From Examples~\ref{example1} and \ref{example2}, the AHMJ method is more appropriate than the adaptive Hermite method if the solution decays algebraically or decays at a rate of $\exp(-\beta|x|)$ as $|x|\rightarrow\infty$. Next, we shall apply the AHMJ method to solve multidimensional spatiotemporal integrodifferential equations.

\begin{exam}\label{example3}\rm
Consider the following 2D fractional advection-diffusion equation
\begin{equation}\label{exam:23}
    \begin{aligned}
        \partial_{t}u + v\cdot \nabla u + (-\Delta)^{1/2} u + u(1 - u) &= f(x, y, t),~ v = \big(\cos(\tfrac{\pi}{3}), \sin(\tfrac{\pi}{3})\big),\\
        u(x, y, 0) &= \frac{1}{(1 + x^{2} + y^{2})^{7}},
    \end{aligned}
\end{equation}
where $f(x, y, t)$ is the source term given in Eq. \eqref{f3}. Eq. \eqref{exam:23} has an analytical solution
\begin{equation}\label{exam:2}
    u(x, y, t) = \frac{(t + 1)^{13}}{\big((t + 1)^{2} + \big(x - \cos\left(\frac{\pi}{3}\right)t\big)^{2} + \big(y - \sin\left(\frac{\pi}{3}\right)t\big)^{2}\big)^{7}}.
\end{equation}
\begin{figure}[h!]
    \centering
    \includegraphics[width = 4.6in]{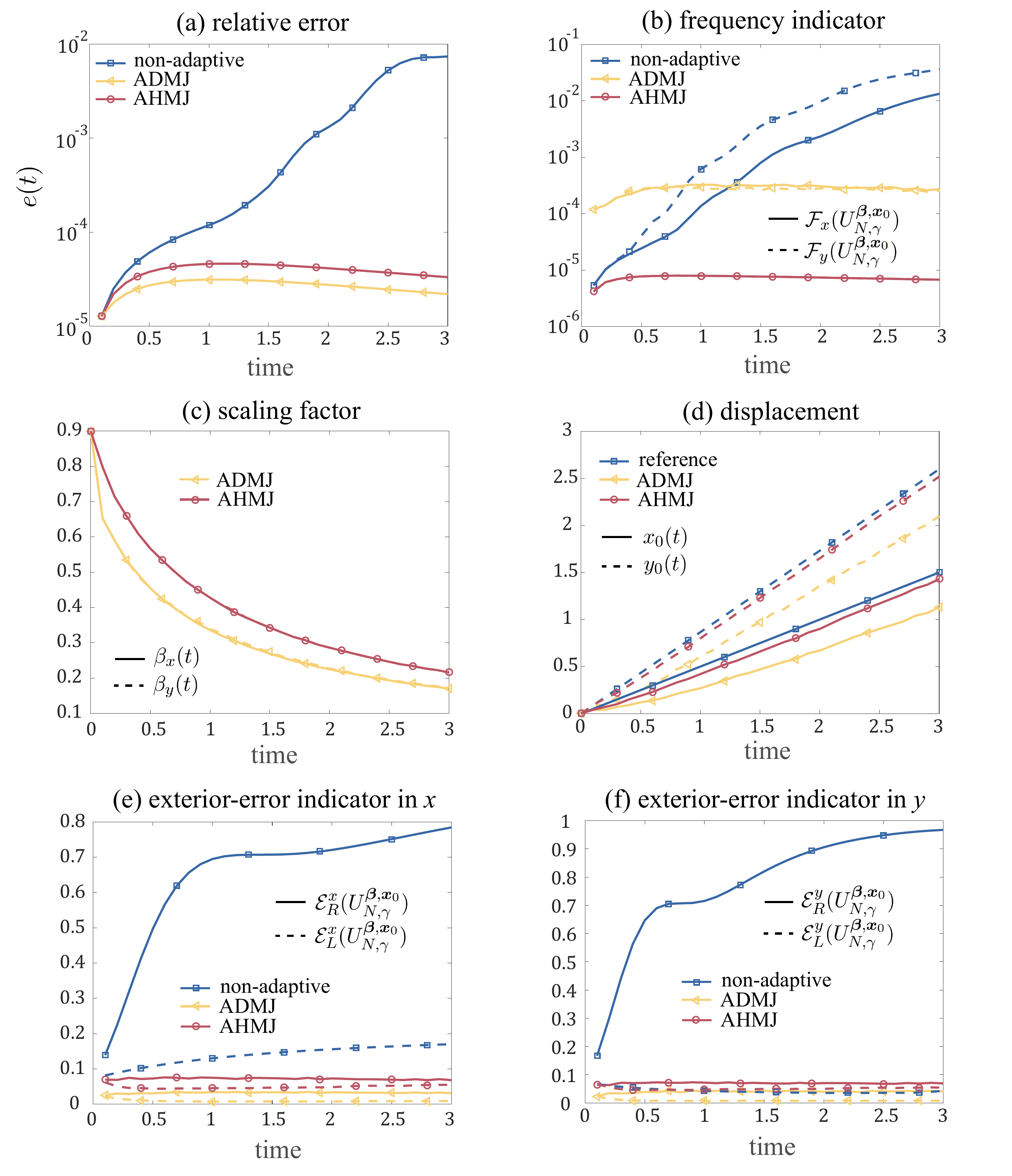}
    \caption{\textnormal{\footnotesize{(a) The errors of the non-adaptive mapped Jacobi, the ADMJ, and the AHMJ methods. (b) The frequency indicators of the non-adaptive, the ADMJ, and the AHMJ methods. (c) The scaling factors of the ADMJ method and the AHMJ method. (d) The displacements $x_0, y_0$ of the ADMJ method and the AHMJ method. Here, the reference displacement is the center of the analytical solution Eq. \eqref{exam:2} $(x(t), y(t)) = (\cos(\frac{\pi}{3})t, \sin(\frac{\pi}{3})t)$. (e, f) The left and right exterior-error indicators (Eq. \eqref{exte}) of the ADMJ method and the AHMJ method.}}}
    \label{fig:2}
\end{figure}

The analytical solution $u(\boldsymbol{x}, t)$ in Eq. \eqref{exam:2} decays at a rate of $|x^{2} + y^{2}|^{-7}$ at infinity. Thus, we set $r = 1$ in Eq. \eqref{eq:3-1} for the mapped Jacobi basis functions so that the basis functions also decay algebraically at infinity. We set the initial hyperbolic cross index set as $N = 30$ and $\gamma = -5$, the initial scaling factor $\boldsymbol{\beta} = (0.9, 0.9)$, and the initial displacement $\boldsymbol{x}_{0} = (0, 0)$. The analytic solution $u(x, y, t)$ translates at a velocity $v = (\cos(\frac{\pi}{3}), \sin(\frac{\pi}{3}))$ and also diffuses over time, so both the scaling technique and the moving technique are required to capture the diffusive and translative behavior of the solution.

The AHMJ method and the ADMJ method can achieve a much smaller error compared to the non-adaptive mapped Jacobi method (shown in Fig.~\ref{fig:2} (a)) because both the ADMJ method and the AHMJ method can adaptively adjust the scaling factors $\beta_x,\beta_y$ as well as the displacements ${x_0}, y_0$ in both directions (Fig.~\ref{fig:2} (c, d)). From (Fig. \ref{fig:2} (e, f)), both the ADMJ method and the AHMJ method can maintain the frequency indicators as well as the left- and right-exterior-error indicators small. In contrast, the non-adaptive mapped Jacobi method fails to do so and results in a large frequency indicator as well as a right-exterior-error indicator. Though the errors of the ADMJ method and the AHMJ method are close to each other, the AHMJ method gives more accurate displacements $x_0, y_0$. This could result from the fact that the AHMJ method can more accurately adjust the scaling factors $\beta_x, \beta_y$ using the hyperbolic frequency indicators in Eq. \eqref{scaling_hyper}.
\end{exam}

Next, we compare the performance of the proposed AHMJ method with the ADMJ method in numerically solving a 3D fractional diffusion equation.
\begin{exam}\label{example4}\rm
Consider the following 3D fractional diffusion equation
\begin{equation}\label{exam:35}
    \begin{aligned}
        \partial_{t}u + (-\Delta)^{3/4} u = f(x, y, z, t),
        u(x, y, z, 0) = \sin\big(x + 6y/5 + z/2\big) \exp\left(\frac{-(x^{2} + y^{2} + z^{2})}{2}\right),
    \end{aligned}
\end{equation}
where $f(x, y, z, t)$ is the source term given in Eq. \eqref{f5}. Eq. \eqref{exam:33} admits an analytical solution
\begin{equation}\label{exam:5}
    u(x, y, z, t) = \frac{\sin\big(x + 6y/5 + z/2\big)}{(3t + 1)^{3/2}} \exp\left(\frac{-(x^{2} + y^{2} + z^{2})}{6t + 2}\right).
\end{equation}

As the analytical solution decays exponentially at infinity, we set $r = 0$ in Eq. \eqref{eq:1-3} for the mapped Jacobi basis functions so that the basis functions decay at a rate of $\exp(-|\beta x|)$ for large $|x|$.
%, which has been shown to perform better than using an algebraic mapping ($r = 0$) for approximating exponentially decaying functions \citep[Chapter 7.5]{Shen2011}. 
We set the initial hyperbolic cross index set as $N = 25$ and $\gamma = -10$, the initial scaling factor $\boldsymbol{\beta} = (0.4, 0.37, 0.3)$, and the initial displacement $\boldsymbol{x}_{0} = (0, 0, 0)$. The analytical solution $u(x, y, z, t)$ in Eq. \eqref{exam:5} decays more slowly at infinity over time in $x$, $y$, and $z$ directions, which requires properly decreasing the scaling factors $\beta_x, \beta_y, \beta_z$ in all directions. By conducting a change of variable $\tilde{x}_t=\frac{x}{\sqrt{3t+1}}$, $\tilde{y}_t=\frac{y}{\sqrt{3t+1}}$, and $\tilde{z}_t=\frac{z}{\sqrt{3t+1}}$, the analytic solution Eq.~\eqref{exam:5} can be rewritten as
\begin{equation}
    u(\tilde{x}_t, \tilde{y}_t, \tilde{z}_t, t) = \frac{\sin\big(\sqrt{3t+1}(\tilde{x}_t + 6\tilde{y}_t/5 + \tilde{z}_t/2)\big)}{(3t + 1)^{3/2}}\exp\left(\frac{-(\tilde{x}^{2}_{t} + \tilde{y}^{2}_{t} + \tilde{z}^{2}_{t})}{2}\right).
\label{analytic_scaled}
\end{equation}
Thus, after appropriately decreasing $\beta_x, \beta_y$, and $\beta_z$, the factor $\sin\big(\sqrt{3t+1}(\tilde{x}_t + 6\tilde{y}_t/5 + \tilde{z}_t/2)\big)$ in Eq.~\eqref{analytic_scaled} becomes more oscillatory and requires incorporating higher-order basis functions to capture such oscillatory behavior.

\begin{figure}[h]
    \centering
    \includegraphics[width = \linewidth]{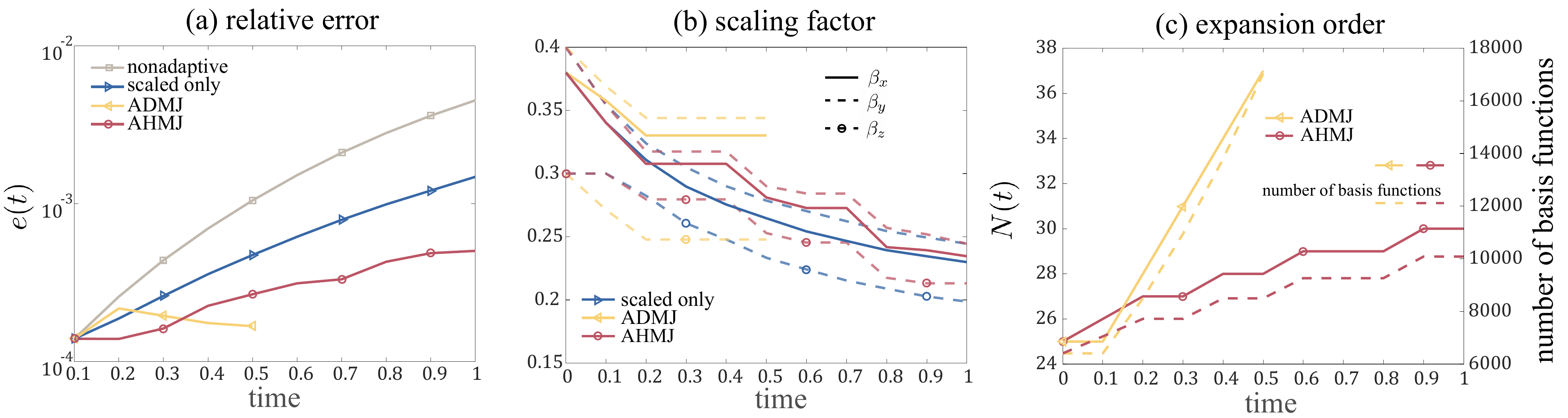}
    \caption{\textnormal{\footnotesize{(a) The errors of the non-adaptive, scaled-only mapped Jacobi method, the ADMJ method, and the AHMJ method.  (b) The scaling factors $\beta_x, \beta_y$, and $\beta_z$ of the scaled-only mapped Jacobi method, ADMJ method, and the AHMJ method, respectively. (c) The expansion order $N$ is generated by the ADMJ method and generated by the AHMJ method.} Our proposed AHMJ method can maintain the error small over time without using a too large number of basis functions, while the previous ADMJ method terminates prematurely because the number of basis functions increases too fast, leading to memory overflow.}}\label{fig:5}
\end{figure}
%\added{

In Fig. \ref{fig:5} (a), our proposed AHMJ method exhibits an improved performance compared to the non-adaptive and scaled-only mapped Jacobi methods, successfully controlling the relative error below $10^{-3}$. This is because our AHMJ method can properly decrease the scaling factors $\beta_x, \beta_y$, and $\beta_z$ in all three directions (Fig. \ref{fig:3} (b)) and increase $N$ (Fig. \ref{fig:3} (c)).

On the other hand, the ADMJ method increases $N$ too much (Fig. \ref{fig:3} (c)) and terminates before $t=1$ as a result of memory overflow. The direct-truncation expansion order frequency indicator uses a too-small number of basis functions for calculating the numerator of $\tilde{\mathcal{F}}_p$. Thus, $\tilde{\mathcal{F}}_p$ could be subjected to large fluctuations. Therefore, the ADMJ method can be less robust, which may lead to a drastic increase in the total number of basis functions and memory overflow. In comparison, the proposed AHMJ method is more robust and prevents $N$ and the number of basis functions from increasing too fast (shown in Fig.~\ref{fig:5} (c)).
\end{exam}

Finally, we extend our adaptive hyperbolic-cross-space techniques to generalized Hermite spectral expansions for numerically solving an unbounded domain spatiotemporal equation.
\begin{exam}\label{example5}\rm
Consider the following 4D equation in \cite{Luo2013}
\begin{equation}\label{exam:33}
    \begin{aligned}
        \partial_{t}u -\Delta u& + (x^{2} + y^{2} + z^{2} + w^{2}) u = f(x, y, z, w, t),\\
        u&(x, y, z, w) = \cos(x + y + z + w)\exp\left(-(x^{2} + y^{2} + z^{2} + w^{2})\right),
    \end{aligned}
\end{equation}
where $f(x, y, z, w, t)$ is the source term given in Eq. \eqref{f4}. Eq. \eqref{exam:33} admits an analytical solution
\begin{equation}\label{exam:3}
    u(x, y, z, w, t) = \frac{\cos(x + y + z + w)}{(t + 1)^{2}} \exp\left(\frac{-(x^{2} + y^{2} + z^{2} + w^{2})}{t + 1}\right).
\end{equation}

The analytical solution exhibits exponential decay at infinity, which is consistent with the decaying rate of the Hermite basis functions. Therefore, we use the Hermite basis functions. We set the initial hyperbolic cross index set as $N = 11$ and $\gamma = -3$, the initial scaling factor $\boldsymbol{\beta} = (1.05, 1.05, 1.05, 1.05)$, and the initial displacement $\boldsymbol{x}_{0} = (0, 0, 0, 0)$. The analytical solution Eq.~\eqref{exam:3} also requires both decreasing the scaling factors in all four directions and increasing the expansion order $N$. We shall apply our proposed hyperbolic-cross-space frequency indicators $\mathcal{F}_{x_i}$ and $\mathcal{F}_p$ defined in Eqs.~\eqref{scaling_hyper} and \eqref{p_hyper} for adjusting the scaling factors and the expansion order (denoted as \textbf{adaptive hyperbolic-cross-space Hermite method}) versus applying previous adaptive Hermite methods in \cite{Xia2020, Xia2022} (denoted as \textbf{adaptive Hermite method}), which use the direct-truncation-strategy frequency indicators  $\tilde{\mathcal{F}}_{x_i}$ and $\tilde{\mathcal{F}}_p$ defined in Eqs.~\eqref{c_scale} and \eqref{cp} for adjusting the scaling factors and the expansion order.
\begin{figure}[h]
    \centering
    \includegraphics[width = \linewidth]{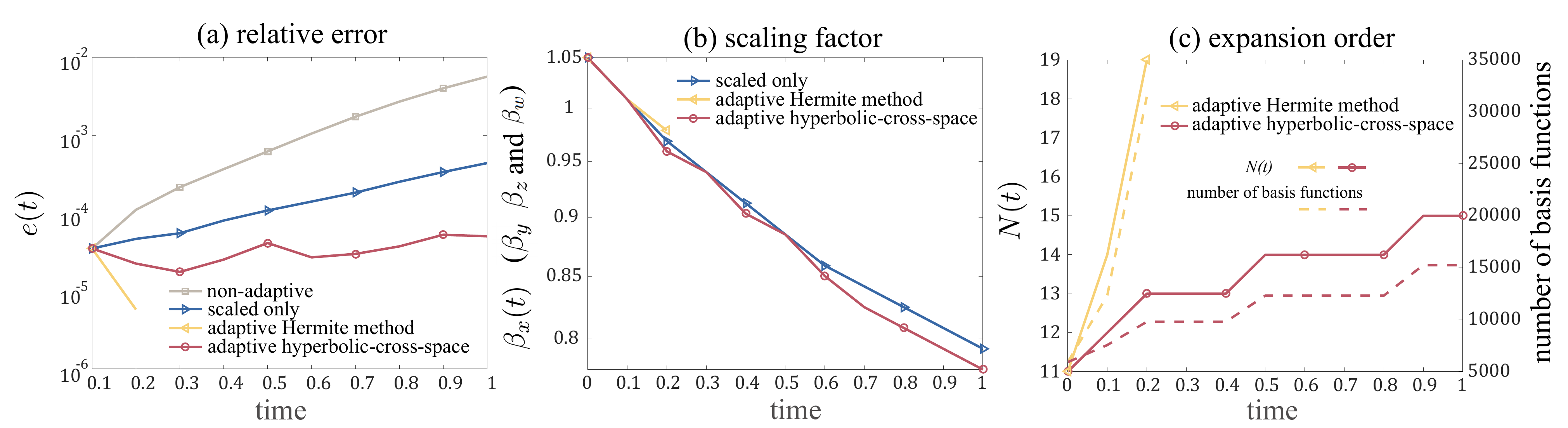}
    \caption{\textnormal{\footnotesize{(a) The errors of the non-adaptive, scaled-only (the adaptive hyperbolic-cross-space Hermite method with the $p$-adaptive technique for adjusting the expansion order $N$ deactivated), and adaptive Hermite methods, as well as the adaptive hyperbolic-cross-space Hermite method. (b) The scaling factors $\beta_x, \beta_y$, $\beta_z$, and $\beta_w$ of the scaled-only and adaptive Hermite methods as well as the adaptive hyperbolic-cross-space Hermite method. Since the analytic solution is homogeneous, the scaling technique yields the same $\beta_x, \beta_y, \beta_z, \beta_w$ in four directions for the adaptive Hermite methods. (c) The expansion order $N$ and the number of basis functions generated from the adaptive Hermite method and the adaptive hyperbolic-cross-space Hermite method. The adaptive Hermite method terminates prematurely as a result of a too-large number of basis functions and memory overflow.}}}
    \label{fig:3}
\end{figure}

In Fig. \ref{fig:3} (a), the adaptive hyperbolic-cross-space Hermite method exhibits an improved accuracy compared to the non-adaptive and scaled-only Hermite methods, successfully controlling the relative error below $10^{-4}$. The adaptive hyperbolic-cross-space Hermite method can properly decrease the scaling factors $\beta_x$, $\beta_y$, $\beta_z$, and $\beta_w$ in all four directions (Fig. \ref{fig:3} (b)) and appropriately increase the expansion order $N$ (Fig. \ref{fig:3} (c)). The previous adaptive Hermite method increases $N$ too fast (Fig. \ref{fig:3} (c)) and terminates prematurely before $t=1$ as a result of memory overflow. 
Again, from (Fig. \ref{fig:3} (c)), the previous direct-truncation strategy for adjusting the expansion order is less robust and subjects to memory overflow as a result of a too-fast-increasing number of basis functions. For both the adaptive Hermite method and the adaptive hyperbolic-cross-space Hermite method, we find that the moving technique will not be activated because the function is origin-symmetric.
%The calculation of the direct truncation expansion order frequency indicator \added{the numerator of $\tilde{\mathcal{F}}_p$ uses a too small number of basis functions, thus the calculated $\tilde{\mathcal{F}}_p$ could be subjected to large fluctuations.} Thus, it can be less robust and subject to fluctuations, which may lead to a drastic increase in the total number of basis functions especially when solving high-dimensional problems. In comparison, 
Compared to the previous adaptive Hermite method, our adaptive hyperbolic-cross-space Hermite method is more robust and prevents the number of basis functions from increasing too fast while achieving high accuracy. In conclusion, our adaptive hyperbolic-cross-space techniques can also be applied to generalized Hermite spectral expansions, and the resulting adaptive hyperbolic-cross-space Hermite method is more robust and efficient than previous adaptive Hermite methods.

% can avoid using redundant basis functions compared to previous adaptive direct truncation techniques intended for full-tensor-product spectral expansions.
\end{exam}

\section{Conclusions}\label{sec:5}
In this paper, we proposed an adaptive hyperbolic-cross-space mapped Jacobi (AHMJ) method for efficiently solving multidimensional spatiotemporal integrodifferential equations in unbounded domains whose solutions decay algebraically at infinity. We devised two hyperbolic-cross-space frequency indicators $\mathcal{F}_{x_i}$ and $\mathcal{F}_p$ in Eqs.~\eqref{scaling_hyper} and \eqref{p_hyper} for efficiently implementing adaptive techniques to sparse multidimensional spectral expansions defined in hyperbolic cross spaces \cite{Shen2010, Luo2013}. Our AHMJ method is more robust compared to previous adaptive techniques for spectral methods \cite{Xia2020, Xia2020(1), Xia2023} and can effectively reduce the number of basis functions needed while maintaining accuracy. Additionally. we provided an upper error bound for applying our AHMJ method to solve a wide range of spatiotemporal integrodifferential equations. We showed that the error of implementing our AHMJ method can be effectively controlled
as long as we chose an appropriate hyperbolic cross space and properly implemented the adaptive techniques for the sparse spectral expansions. 

A promising future direction is to figure out an appropriate strategy for modifying the hyperbolic cross space index $\gamma$, and to develop adaptive techniques on the asymmetric hyperbolic cross space as presented in \cite{Griebel2007}. This could require further investigation on how to tackle the heterogeneity of a multidimensional function. Furthermore, considering operator-splitting strategies for forwarding time, which are easier to implement than high-order implicit Runge-Kutta schemes, could be prospective \cite{Shen2012(1), Raviola2024, wang2024time, raviola2024performance}. 
Also, it is prospective to extend hyperbolic-cross-space adaptive techniques to generalized Laguerre functions in the semi-unbounded domain $\mathbb{R}^+$ \cite{vismara2022seamless, vismara2024efficient, xiong2022short}.
Finally, applying the proposed AHMJ method to solve inverse problems of reconstructing spatiotemporal equations \cite{Xia2023A} is worth further research.

\section*{CRediT authorship contribution statement}
\textbf{Yunhong Deng}: Investigation, Software, Formal Analysis, Writing – original draft, Writing – review \& editing. 
\textbf{Sihong Shao}: Supervision, Conceptualization, Writing – review \& editing, Funding Acquisition. 
\textbf{Alex Mogilner}: Supervision, Conceptualization, Writing – review \& editing, Funding Acquisition. 
\textbf{Mingtao Xia}: 
Supervision, Project Administration, Conceptualization, Formal Analysis, Investigation, Writing – original draft, Writing – review \& editing.
\section*{Acknowledgement}
MX and AM are supported by NSF grants DMS 2052515 and DMS 2011544. SS is partially supported by the National Natural Science Foundation of China (Nos. 12325112, 12288101).

\section*{Declaration of competing interest}
The authors declare that they have no known competing financial interests or personal relationships that could have appeared to influence the work reported in this paper.

\section*{Data Availability}
No data was used for the research described in the article.

\appendix
\section{Proof of Theorem \ref{th:2-3}}\label{ap:2} In this section, we prove the existence of a solution $u \in X(0, T)$ to the model problem \eqref{eq:2-2}. We denote
\begin{equation}
    \mathscr{V}^{1} \coloneqq L^{2}\big([0, T]; H^{1}(\mathbb{R}^{d})\big) \text{ and } 
    \mathscr{V}^{-1} \coloneqq L^{2}\big([0, T]; H^{-1}(\mathbb{R}^{d})\big).
\end{equation}
$ \mathscr{V}^{1}$ and $ \mathscr{V}^{-1}$ are equipped with norms 
\begin{equation}
    \|u\|_{\mathscr{V}^{1}}^{2} \coloneqq \int_{0}^{T}\|u\|_{H^{1}}^{2}\text{d}t \text{ and } \|u\|_{\mathscr{V}^{-1}}^{2} \coloneqq \int_{0}^{T}\|u\|_{H^{-1}}^{2}\text{d}t.
\end{equation}

Fix $w \in X(0, T)$ and consider the following problem of finding $u\in X(0, T)$ such that
\begin{equation}\label{p:2-1}
    \begin{aligned}
        \big(\partial_{t}u,v\big) + a\big(u, v; t\big) &= \big(f(w; t), v\big),~(\boldsymbol{x}, t) \in\mathbb{R}^{d} \times [0, T],\\
        \big(u(x, 0), \Tilde{v}(x)\big) &= \big(u_{0}(x), \Tilde{v}(x)\big),\,\,
        \forall (v,\Tilde{v}) \in L^{2}\big([0, L], H^{1}(\mathbb{R}^{d})\big) \times H^{1}(\mathbb{R}^{d}).
    \end{aligned}
\end{equation}
Since $w \in X(0, T) \subseteq C([0, T]; L^{2}(\mathbb{R}))$, using \citep[Theorem 1, page 473]{Dautray1992}, we have
\begin{equation}\label{A1:8}
    f(w; t) \in C\big([0, T]; L^{2}(\mathbb{R}^{d})\big).
\end{equation}
Using \citep[Theorem 2, page 513]{Dautray1992}, we can prove that $\forall w\in X(0, T)$ there exists a $G(w)\in X(0, T)$ which solves the following parabolic equation
\begin{equation}\label{A1:1}
    \begin{aligned}
        \big(\partial_{t} G(w), v\big) + a\big(G(w), v; t\big) &= \big(f(w; t), v\big),\\
        \big(G(w)(\cdot, 0), \Tilde{v}\big) &= \big(u_{0}(\cdot), \Tilde{v}\big),\,\,
        \forall (v,\Tilde{v}) \in L^{2}\big([0, L], H^{1}(\mathbb{R}^{d})\big) \times H^{1}(\mathbb{R}^{d}).
    \end{aligned}
\end{equation}

The existence of a weak solution $u(\boldsymbol{x}, t)$ to the model problem Eq. \eqref{eq:2-2} can be established by proving the existence of a fixed point to the mapping $G$ such that $G(w) = w \in X(0, T)$. We shall prove the following lemma.
\begin{lemma}\label{A1:6}
For the mapping $G$ defined in Eq.~\eqref{A1:1}, there exists $u(\boldsymbol{x}, t) \in \mathscr{V}^{1}$ such that  $u = G(u)$.
\end{lemma}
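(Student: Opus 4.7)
The natural strategy is to show that the mapping $G$ defined by Eq.~\eqref{A1:1} is a contraction on the Hilbert space $\mathscr{V}^{1}$ and then invoke the Banach fixed point theorem. First, I would verify that $G$ is well-defined as a mapping $\mathscr{V}^{1} \to \mathscr{V}^{1}$. Given $w \in \mathscr{V}^{1} \subset L^{2}([0,T];L^{2}(\mathbb{R}^{d}))$, the Lipschitz condition \eqref{eq:1-5} on $f$ implies $f(w;t) \in L^{2}([0,T];L^{2}(\mathbb{R}^{d}))$, so that the already-cited parabolic theory (\citep[Theorem 2, page 513]{Dautray1992}) yields a unique $G(w) \in X(0,T) \subset \mathscr{V}^{1}$ solving Eq.~\eqref{A1:1}.

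Next, I would derive the key contraction estimate. For $w_{1}, w_{2} \in \mathscr{V}^{1}$, let $u_{i} = G(w_{i})$. Subtracting the two instances of Eq.~\eqref{A1:1} and testing with $v = u_{1} - u_{2}$, I obtain
\begin{equation}
    \tfrac{1}{2}\tfrac{d}{dt}\|u_{1}-u_{2}\|_{L^{2}}^{2} + a\big(u_{1}-u_{2}, u_{1}-u_{2}; t\big) = \big(f(w_{1};t)-f(w_{2};t), u_{1}-u_{2}\big).
\end{equation}
Applying the coercivity of $a$ from \eqref{eq:1-4}, the Lipschitz bound \eqref{eq:1-5}, the embedding $\|\cdot\|_{L^{2}} \le \|\cdot\|_{H^{1}}$, and Young's inequality with a carefully chosen constant, this becomes
\begin{equation}
    \tfrac{d}{dt}\|u_{1}-u_{2}\|_{L^{2}}^{2} + c_{0}\|u_{1}-u_{2}\|_{H^{1}}^{2} \le \tfrac{L^{2}}{c_{0}}\|w_{1}-w_{2}\|_{L^{2}}^{2}.
\end{equation}
Integrating from $0$ to $T$ and using $u_{1}(\cdot,0) = u_{2}(\cdot,0) = u_{0}$ gives
\begin{equation}
    \|G(w_{1}) - G(w_{2})\|_{\mathscr{V}^{1}}^{2} \le \tfrac{L^{2}}{c_{0}^{2}}\|w_{1}-w_{2}\|_{\mathscr{V}^{1}}^{2},
\end{equation}
so under the standing assumption $L < c_{0}$ of Theorem~\ref{th:2-3}, $G$ is a strict contraction on $\mathscr{V}^{1}$.

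Since $\mathscr{V}^{1} = L^{2}([0,T];H^{1}(\mathbb{R}^{d}))$ is a Hilbert (hence complete metric) space, the Banach fixed point theorem supplies a unique $u \in \mathscr{V}^{1}$ with $G(u) = u$, proving the lemma. I expect the main obstacle to be the contraction estimate itself: one must carefully balance the use of coercivity against the $L^{2}$-to-$L^{2}$ Lipschitz bound on $f$, since $f$ only controls the $L^{2}$ norm whereas coercivity provides the stronger $H^{1}$ norm. The embedding $\|\cdot\|_{L^{2}} \le \|\cdot\|_{H^{1}}$ together with the Young-type splitting is what makes the constants line up so that the smallness condition $L < c_{0}$ (rather than some weaker or stronger threshold) suffices. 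As a minor technical point, the fixed point $u$ initially lives only in $\mathscr{V}^{1}$, but once $G(u) = u$ is established, $u = G(u) \in X(0,T)$ automatically, so $\partial_{t} u \in \mathscr{V}^{-1}$ is recovered from the parabolic equation itself.
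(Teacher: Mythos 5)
Your proposal is correct and follows essentially the same route as the paper: the paper runs the identical energy estimate (test with the difference, apply coercivity, the $L^2$ Lipschitz bound, and Young's inequality with weight $c_0/2$) on consecutive Picard iterates $u^{n+1}=G(u^n)$ to obtain the contraction ratio $L/c_0$ and then invokes the Banach contraction principle. Your phrasing of the estimate directly as $\|G(w_1)-G(w_2)\|_{\mathscr{V}^1}\le (L/c_0)\|w_1-w_2\|_{\mathscr{V}^1}$ for arbitrary $w_1,w_2$ is an equivalent packaging of the same argument.
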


\begin{proof}
The existence of the mapping $G$ is guaranteed by \citep[Theorem 2, page 513]{Dautray1992}. We define a sequence $\{u^{n}\}_{n = 0}^{\infty}$ in the Banach space $X(0, t)$ by $u^{n + 1} = G(u^{n})$ such that
\begin{equation}\label{eq:2-18}
    \begin{aligned}
        \big(\partial_{t} u^{n + 1}, v\big) + a\big(u^{n + 1}, v; t\big) &= \big(f(u^{n}; t), v\big),~
        \forall v \in L^{2}\big([0, T], H^{1}(\mathbb{R}^{d})\big),\,\,
        u^{n + 1}(\boldsymbol{x}, 0) &= u_{0}(\boldsymbol{x}).
    \end{aligned}
\end{equation}

%Before we prove the existence of a fixed point in $X(0, t)$, 
We first prove the existence of the fixed point $u\in\mathscr{V}^{1}$ such that $G(u)=u$. Next, we prove that for this fixed point $u$ we have $\partial_{t}u \in H^{-1}(\mathbb{R}^{d})$ and then show the convergence of $\{\partial_{t}u^{n}\}_{n = 0}^{\infty}$ in $\mathscr{V}^{-1}$. By shifting the index $n$ in Eq \eqref{eq:2-18}, we have
\begin{equation}\label{eq:2-19}
    \begin{aligned}
        \big(\partial_{t} u^{n}, v\big) + a\big(u^{n}, v; t\big) &= \big(f(u^{n - 1}; t), v\big),~
        \forall v \in L^{2}\big([0, T], H^{1}(\mathbb{R}^{d})\big),\\
        u^{n}(x, 0) &= u_{0}(x).
    \end{aligned}
\end{equation}
By subtracting Eq. \eqref{eq:2-19} from Eq.~\eqref{eq:2-18}, for $x^{n + 1} \coloneqq u^{n + 1} - u^{n}$, we have
\begin{equation}\label{p:2-4}
    \big(\partial_{t}x^{n + 1}, v\big) + a\big(x^{n + 1}, v; t\big) = \big(f(u^{n}; t) - f(u^{n - 1}; t), v\big), \forall v \in L^{2}\big([0, T], H^{1}(\mathbb{R}^{d})\big).
\end{equation}
We can take $
    v = x^{n + 1}$ in Eq. \eqref{p:2-4} to obtain
\begin{equation}\label{p:2-2}
    (\partial_{t}x^{n + 1}, x^{n + 1})+ a(x^{n + 1}, x^{n + 1}; t) = \big(f(u^{n}; t) - f(u^{n - 1}; t), x^{n + 1}\big).
\end{equation}
Additionally, by the coercity condition of the operators $a(u, v; t)$ and the Lipschitz continuity of $f(u; t)$, we have
\begin{equation}\label{A1:7}
\begin{aligned}
    c_{0}\|x^{n + 1}\|_{H^{1}}^{2} \le a(x^{n + 1}, x^{n + 1}; t),\,\,\big(f(u^{n}; t) - f(u^{n - 1}; t), x^{n + 1}\big) \le L\|x^{n}\|_{L^{2}} \|x^{n + 1}\|_{L^{2}}.
\end{aligned}
\end{equation}
Therefore, we have
\begin{equation}
    \begin{aligned}
        (\partial_{t}x^{n + 1}, x^{n + 1}) + c_{0}\|x^{n + 1}\|^{2}_{H^{1}} &\le (\partial_{t}x^{n + 1}, x^{n + 1}) + a(x^{n + 1}, x^{n + 1}; t)= \big(f(u^{n}; t) - f(u^{n - 1}; t), x^{n + 1}\big)\\
        &\le L\|x^{n}\|_{L^{2}} \|x^{n + 1}\|_{L^{2}}
%    \end{aligned}
%\end{equation}
%Furthermore, by using the Cauchy inequality, we have
%\begin{equation}
 %   \begin{aligned}
 %       (\partial_{t}x^{n + 1}, x^{n + 1}) + c_{0}\|x^{n + 1}\|^{2}_{H^{1}} &\le \frac{L^{2}}{2c_{0}}\|x^{n}\|_{L^{2}}^{2} + \frac{c_{0}}{2} \|x^{n + 1}\|_{L^{2}}^{2}\\
         \leq\frac{L^{2}}{2c_{0}}\|x^{n}\|_{H^{1}}^{2} + \frac{c_{0}}{2} \|x^{n + 1}\|_{H^{1}}^{2}.
    \end{aligned}
\end{equation}
Integrating the above inequality over time, we have
\begin{equation}
    \begin{aligned}
        \frac{1}{2}\|x^{n + 1}(\cdot, T)\|_{L^{2}}^{2} &- \frac{1}{2}\|x^{n + 1}(\cdot, 0)\|_{L^{2}}^{2} + c_{0}\int_{0}^{T}\|x^{n + 1}\|^{2}_{H^{1}}\text{d}t\\
        &\quad \le \frac{L^{2}}{2c_{0}} \int_{0}^{T} \|x^{n}\|_{H^{1}}^{2}\text{d}t + \frac{c_{0}}{2} \int_{0}^{T} \|x^{n + 1}\|_{H^{1}}^{2}\text{d}t.
    \end{aligned}
\end{equation}
Since $\|x^{n + 1}(\cdot, 0)\|_{L^{2}}^{2} = 0$, we can show that
\begin{equation}\label{p:2-3}
    \int_{0}^{T}\|x^{n + 1}(\cdot, t)\|_{H^{1}}^{2}\text{d}t \le \frac{L^{2}}{c_{0}^{2}}\int_{0}^{T}\|x^{n}(\cdot, t)\|_{H^{1}}^{2}\text{d}t.
\end{equation}
Thus, we have proved that
\begin{equation}\label{p:2-18}
    \|u^{n + 1} - u^{n}\|_{\mathscr{V}^{1}} \le \frac{L}{c_{0}} \|u^{n} - u^{n - 1}\|_{\mathscr{V}^{1}},
\end{equation}
Using the Banach contraction principle:
\begin{equation}\label{A1:3}
    L < c_{0} \Longrightarrow \exists!~ u \in \mathscr{V}^{1}:~ u = \lim_{n\to \infty} u^{n} \text{ such that } u = G(u).
\end{equation}
\end{proof}

Next, we shall prove the following result for $\partial_t u$:
\begin{lemma}\label{A1:5} For the same $u(\boldsymbol{x}, t)$ as defined in Eq. \eqref{A1:3}, we have
\begin{equation}
     \partial_{t}u \in \mathscr{V}^{-1}.
\end{equation}
Additionally, we show the strong convergence of the sequence $\partial_{t} u^{n}(\boldsymbol{x}, t)$ to $\partial_{t}u(\boldsymbol{x}, t)$ \textit{w.r.t.} the $H^{-1}$ norm:
\begin{equation}
    \lim_{n \to \infty} \|\partial_{t}u^{n} - \partial_{t}u\|_{H^{-1}} = 0.
\end{equation}
\end{lemma}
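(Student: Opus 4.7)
\textbf{Proof plan for Lemma~\ref{A1:5}.} The plan is to extract an $H^{-1}$-bound on $\partial_t u^{n+1}$ directly from the weak equation~\eqref{eq:2-18}, then specialize this bound to the successive differences $x^{n+1}=u^{n+1}-u^n$ in order to upgrade the contraction~\eqref{p:2-18} established in Lemma~\ref{A1:6} from $\mathscr{V}^1$ to a Cauchy estimate in $\mathscr{V}^{-1}$ for the time derivatives.

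First, I would fix $n$ and, for an arbitrary test function $v\in H^1(\mathbb{R}^d)$, rewrite~\eqref{eq:2-18} as
\begin{equation*}
    \bigl(\partial_{t}u^{n+1},v\bigr) = -a\bigl(u^{n+1},v;t\bigr) + \bigl(f(u^{n};t),v\bigr).
\end{equation*}
Applying the continuity bound from~\eqref{eq:1-4} and the Lipschitz bound from~\eqref{eq:1-5} (with $v=0$ to absorb the $f(0;t)$ term into a source that is bounded by the hypothesis used to produce $G$), I obtain the pointwise-in-$t$ estimate
\begin{equation*}
    \|\partial_{t}u^{n+1}\|_{H^{-1}} \le C_{0}\|u^{n+1}\|_{H^{1}} + L\|u^{n}\|_{L^{2}} + \|f(0;t)\|_{H^{-1}},
\end{equation*}
which, after squaring and integrating over $[0,T]$ and invoking that $\{u^{n}\}\subset\mathscr{V}^{1}$, yields $\partial_{t}u^{n+1}\in\mathscr{V}^{-1}$ with a norm controlled uniformly in $n$.

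Next, I would apply exactly the same argument to equation~\eqref{p:2-4} for $x^{n+1}=u^{n+1}-u^n$, in which the source term is now $f(u^n;t)-f(u^{n-1};t)$. The continuity of $a(\cdot,\cdot;t)$ and the Lipschitz estimate give
\begin{equation*}
    \|\partial_{t}x^{n+1}\|_{H^{-1}} \le C_{0}\|x^{n+1}\|_{H^{1}} + L\|x^{n}\|_{L^{2}},
\end{equation*}
so integrating over $[0,T]$ produces
\begin{equation*}
    \|\partial_{t}x^{n+1}\|_{\mathscr{V}^{-1}}^{2} \le 2C_{0}^{2}\|x^{n+1}\|_{\mathscr{V}^{1}}^{2} + 2L^{2}\|x^{n}\|_{\mathscr{V}^{1}}^{2}.
\end{equation*}
Combining this with the contraction~\eqref{p:2-18} in Lemma~\ref{A1:6} and the assumption $L<c_{0}$, both terms on the right decay geometrically in $n$, so $\sum_{n}\|\partial_{t}x^{n+1}\|_{\mathscr{V}^{-1}}<\infty$ and $\{\partial_{t}u^{n}\}$ is Cauchy in the complete space $\mathscr{V}^{-1}$. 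Let $w\in\mathscr{V}^{-1}$ denote its limit.

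Finally, I would identify $w$ with $\partial_{t}u$. Since $u^{n}\to u$ in $\mathscr{V}^{1}\hookrightarrow\mathscr{V}^{-1}$, the distributional derivatives converge: $\partial_{t}u^{n}\to\partial_{t}u$ in $\mathcal{D}'((0,T);H^{-1})$. Uniqueness of distributional limits gives $w=\partial_{t}u$, completing the proof that $\partial_{t}u\in\mathscr{V}^{-1}$ and that the convergence is strong. I expect the only subtle point to be the first step, namely justifying that the quantity $(\partial_{t}u^{n+1},v)$ appearing in the weak formulation of~\eqref{eq:2-18} is indeed the action of an element of $H^{-1}$ on $v$ and that the $H^{-1}$ norm is bounded by the operator-norm estimate above; this is standard in the Dautray--Lions framework already cited for the existence of $G$ but needs to be invoked explicitly here.
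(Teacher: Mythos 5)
Your proposal is correct, and the core of the argument — bounding $\|\partial_{t}x^{n+1}\|_{\mathscr{V}^{-1}}^{2}$ by $2C_{0}^{2}\|x^{n+1}\|_{\mathscr{V}^{1}}^{2}+2L^{2}\|x^{n}\|_{\mathscr{V}^{1}}^{2}$ via the continuity of $a$ and the Lipschitz bound on $f$, then summing the resulting geometric series using the contraction $\|x^{n+1}\|_{\mathscr{V}^{1}}\le (L/c_{0})\|x^{n}\|_{\mathscr{V}^{1}}$ to conclude that $\{\partial_{t}u^{n}\}$ is Cauchy in $\mathscr{V}^{-1}$ — is exactly the paper's argument. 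The one place you diverge is in establishing $\partial_{t}u\in\mathscr{V}^{-1}$ and identifying the limit: the paper tests the fixed-point equation $(\partial_{t}u,v)=(f(u;t),v)-a(u,v;t)$ directly against $v\in H^{1}$ to show $\partial_{t}u$ is a bounded functional, whereas you derive uniform $\mathscr{V}^{-1}$-bounds on the iterates $\partial_{t}u^{n+1}$ and then identify the $\mathscr{V}^{-1}$-limit $w$ with $\partial_{t}u$ by uniqueness of distributional limits, using $u^{n}\to u$ in $\mathscr{V}^{1}$. Your route is slightly longer but actually closes a step the paper leaves implicit: the paper shows $\{\partial_{t}u^{n}\}$ converges in $\mathscr{V}^{-1}$ and separately that $\partial_{t}u\in\mathscr{V}^{-1}$, but never explicitly argues that the limit of the sequence \emph{is} $\partial_{t}u$, which is what the second assertion of the lemma requires; your distributional-uniqueness step supplies exactly that. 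The only cosmetic point to tidy is the phrasing of the first estimate: the term $\|f(0;t)\|_{H^{-1}}$ should come from writing $f(u^{n};t)=f(0;t)+\bigl(f(u^{n};t)-f(0;t)\bigr)$ and applying \eqref{eq:1-5} to the difference (the paper instead invokes $f(u;t)\in C([0,T];L^{2})$ from \eqref{A1:8}); either is fine, but state it that way rather than "with $v=0$".
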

\begin{proof}
Since $u$ is the fixed point of the mapping $G$, we have
\begin{equation}\label{A1:4}
    \big(\partial_{t}u, v\big) = \big(f(u; t), v\big) - a\big(u, v, t\big),~\forall v \in H^{1}(\mathbb{R}^{d}),
\end{equation}
Using the Cauchy inequality, we have
\begin{equation}\label{p:2-15}
    \begin{aligned}
        \big(\partial_{t}u, v\big) = \big(f(u; t), v\big) - a\big(u, v; t\big) \le \big(\|f(u; t)\|_{L^{2}} + C_{0}\|u\|_{H^{1}}\big) \|v\|_{H^{1}}.
    \end{aligned}
\end{equation}
Therefore, $\partial_{t}u$ defines a continuous linear functional for $v \in H^{1}(\mathbb{R}^{d})$.
By the definition of the dual space $\mathscr{V}^{-1} = L^{2}\big([0, T]; H^{-1}(\mathbb{R}^{d})\big)$, we conclude that $\partial_{t}u \in \mathscr{V}^{-1}$.

Next, to show the convergence of the $\{\partial_{t}u^{n}\}_{n = 0}^{\infty}$ in the Banach space $\mathscr{V}^{-1}$, we prove that $\{\partial_{t}u^{n}\}_{n = 0}^{\infty}$ is a Cauchy sequence $\mathscr{V}^{-1}$. Denote $x^{n + 1} \coloneqq u^{n + 1} - u^{n}$. By Eq. \eqref{eq:2-18}, we have
\begin{equation}
    \begin{aligned}
        \|\partial_{t}x^{n + 1}\|_{H^{-1}} &= \sup_{v \in H^{1}(\mathbb{R})} \frac{\big(\partial_{t}x^{n + 1}, v\big)}{\|v\|_{H^{1}}}\\ 
        &= \sup_{v \in H^{1}(\mathbb{R})} \frac{\big(f(u^{n}) - f(u^{n - 1}), v\big) - a\big(x^{n + 1}, v; t\big)}{\|v\|_{H^{1}}}\le C_{0}\|x^{n + 1}\|_{H^{1}} + L\|x^{n}\|_{H^{1}}.
    \end{aligned}
\end{equation}

Integrating the above inequality over time, we can show that
\begin{equation}
        \int_{0}^{T}\|\partial_{t}x^{n + 1}\|_{H^{-1}}^{2}\text{d}t \le 2C_{0}^{2}\int_{0}^{T}\|x^{n + 1}\|_{H^{1}}^{2}\text{d}t + 2L^{2}\int_{0}^{T}\|x^{n}\|_{H^{1}}^{2}\text{d}t.
\end{equation}
In other words,
\begin{equation}
    \|\partial_{t}x^{n + 1}\|_{\mathscr{V}^{-1}}^{2} \le 2C_{0}^{2}\|x^{n + 1}\|_{\mathscr{V}^{1}}^{2} + 2L^{2}\|x^{n}\|_{\mathscr{V}^{1}}^{2}.
\end{equation}

Since we have proved that $\|x^{n + 1}\|_{\mathscr{V}^{1}} \le \|x^{n}\|_{\mathscr{V}^{1}}$ in Eq. \eqref{p:2-18}, we find
\begin{equation}\label{A1:9}
    \|\partial_{t}x^{n + 1}\|_{\mathscr{V}^{-1}}^{2} \le (2L^{2} + 2C_{0}^{2})\|x^{n}\|_{\mathscr{V}^{1}}^{2}.
\end{equation}
Furthermore, by leveraging the property of the Cauchy sequence $\{x^{n}\}_{n = 0}^{\infty}$ in $\mathscr{V}^{1}$, we have
\begin{equation}
    \|\partial_{t}u^{m} - \partial_{t}u^{n}\|_{\mathscr{V}^{-1}} \le \sum_{i = 1}^{m - n} \|\partial_{t}x^{n + i}\|_{\mathscr{V}^{-1}} ~\forall n < m.
\end{equation}
Therefore, by Eq. \eqref{A1:9}, we have
\begin{equation}
    \|\partial_{t}u^{m} - \partial_{t}u^{n}\|_{\mathscr{V}^{-1}} \le (2C_{0}^{2} + 2L^{2})^{1/2} \sum_{i = 0}^{m - n - 1} \|x^{n + i}\|_{\mathscr{V}^{1}}.
\end{equation}
Using the inequality~\eqref{p:2-3}, we have
\begin{equation}
    \begin{aligned}
        \|\partial_{t}u^{m} - \partial_{t}u^{n}\|_{\mathscr{V}^{-1}} &\le (2C_{0}^{2} + 2L^{2})^{1/2} \sum_{i = 0}^{m - n - 1} q^{i} \|x^{n}\|_{\mathscr{V}^{1}}\le (2C_{0}^{2} + 2L^{2})^{1/2} \frac{1}{1 - q} \|x^{n}\|_{\mathscr{V}^{1}}\\
        &\le (2C_{0}^{2} + 2L^{2})^{1/2} \frac{1}{1 - q} \|u^{n + 1} - u^{n}\|_{\mathscr{V}^{1}},
    \end{aligned}
\end{equation}
where $q = L/c_{0}$. Because $\|u^{n + 1} - u^{n}\|_{\mathscr{V}^{1}} \to 0$ as $n \to \infty$, we find
\begin{equation}
    \forall \varepsilon \in \mathbb{R},~\exists N \text{ such that } \forall m > n > N \Longrightarrow \|\partial_{t}u^{m} - \partial_{t}u^{n}\|_{\mathscr{V}^{-1}} \le \varepsilon,
\end{equation}
which implies that $\{\partial_{t}u^{n}\}_{n = 1}^{\infty}$ converges in $\mathscr{V}^{-1}$.
\end{proof}

Finally, by combining Lemma~\ref{A1:6} and Lemma~\ref{A1:5}, $\{u^{n}\}_{n = 1}^{\infty}$ converge to $u(\boldsymbol{x}, t)$ in the space $X(0, T)$, and
\begin{equation}
    u = G(u) \in X(0, T).
\end{equation}
Therefore, the solution to the equation \eqref{eq:2-2} exists and is unique.

\section{Proof of Proposition~\ref{prop3}}\label{ap:6}
In this section, we prove that the bilinear form defined in \eqref{eq:2-22}:
\begin{equation}
    a\big(u, v; t\big) \coloneqq \big(G \ast \nabla u, \nabla v\big) + \varepsilon\big((u, v) + (\nabla u, \nabla v)\big),
\end{equation}
satisfies the continuous condition and the coercive condition such that
\begin{equation}
\begin{aligned}
    a\big(u, v; t\big) \le C_{0}\|u\|_{H^{1}} \|v\|_{H^{1}}, \,\, c_{0}\|u\|_{H^{1}}^{2} \le a\big(u, u; t\big).
\end{aligned}
\end{equation}

To prove the continuity of $a(u, v; t)$, we use the following lemma \citep[Lemma 1.2.30, page 28]{Tuomas2016}.
\begin{lemma}[Young's inequality]
\label{youngs}
For $u \in L^{p}(\mathbb{R}^{d})$ and $G \in L^{1}(\mathbb{R}^{d})$, we have
\begin{equation}
    \|G \ast u\|_{L^{p}} \le \|G\|_{L^{1}} \|u\|_{L^{p}}.
\end{equation}
\end{lemma}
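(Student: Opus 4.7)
The plan is to prove this classical Young convolution inequality by the standard Hölder--Fubini argument, which handles all exponents $1 \le p \le \infty$ uniformly. First I would dispatch the endpoint case $p=\infty$ by the pointwise estimate $|(G\ast u)(x)| \le \int_{\mathbb{R}^d}|G(x-y)||u(y)|\,\mathrm{d}y \le \|u\|_{L^\infty}\|G\|_{L^1}$, which uses nothing more than the translation-invariance of the integral. For $p=1$ I would invoke Tonelli's theorem to interchange the order of integration in $\int_{\mathbb{R}^d}\bigl|\int_{\mathbb{R}^d} G(x-y)u(y)\,\mathrm{d}y\bigr|\,\mathrm{d}x$, then use the change of variable $x\mapsto x+y$ to factor the integrals as $\|G\|_{L^1}\|u\|_{L^1}$.

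For the general case $1 < p < \infty$, let $p'$ be the Hölder conjugate ($1/p+1/p'=1$). The key trick is to split the integrand of the convolution as
\begin{equation}
|G(x-y)|\,|u(y)| = |G(x-y)|^{1/p'}\cdot\bigl(|G(x-y)|^{1/p}|u(y)|\bigr),
\end{equation}
and then apply Hölder's inequality in the $y$ variable to obtain the pointwise bound
\begin{equation}
|(G\ast u)(x)| \le \|G\|_{L^1}^{1/p'}\left(\int_{\mathbb{R}^d}|G(x-y)|\,|u(y)|^p\,\mathrm{d}y\right)^{1/p}.
\end{equation}
Raising to the $p$-th power, integrating over $x\in\mathbb{R}^d$, and applying Tonelli's theorem (again with the translation-invariant change of variable) to the remaining double integral yields
\begin{equation}
\|G\ast u\|_{L^p}^p \le \|G\|_{L^1}^{p/p'}\cdot \|G\|_{L^1}\,\|u\|_{L^p}^p = \|G\|_{L^1}^p\,\|u\|_{L^p}^p,
\end{equation}
and taking $p$-th roots gives the claimed estimate.

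The argument has no genuine obstacle; the only points that require a small amount of care are (i) justifying that the integrand $G(x-y)u(y)$ is measurable and integrable in $y$ for almost every $x$ (which follows from Tonelli applied to $|G(x-y)u(y)|$) so that the convolution is well-defined almost everywhere, and (ii) choosing the exponent split $1/p'+1/p=1$ correctly so that both factors combine after Fubini into the correct power of $\|G\|_{L^1}$. Since the excerpt cites this lemma purely as a tool, I would present the proof as above and not pursue sharper constants or the general Young's inequality with different $L^q$ and $L^r$ exponents.
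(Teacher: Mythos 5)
Your proof is correct. Note, however, that the paper does not actually prove this lemma: it is stated as a known result and justified only by a citation to the literature (Lemma 1.2.30 of the referenced functional-analysis text), since it is used purely as a tool in establishing the continuity of the bilinear form $a(u,v;t)$ in Proposition \ref{prop3}. Your argument is the standard H\"older--Fubini proof and is complete: the exponent split $|G(x-y)|\,|u(y)| = |G(x-y)|^{1/p'}\cdot\bigl(|G(x-y)|^{1/p}|u(y)|\bigr)$ with $1/p+1/p'=1$, followed by H\"older in $y$, raising to the $p$-th power, and Tonelli with a translation change of variables, correctly produces $\|G\|_{L^1}^{p/p'}\cdot\|G\|_{L^1} = \|G\|_{L^1}^{p}$ since $p/p' = p-1$; the endpoint cases $p=1$ and $p=\infty$ are handled correctly as well. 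For the paper's actual application only $p=2$ is needed, so one could shorten the argument to that single exponent, but supplying the full self-contained proof is a legitimate (and arguably preferable) alternative to the bare citation.
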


By the Cauchy-Schwartz inequality, we have
\begin{equation}\label{A2:1}
    \begin{aligned}
        a\big(u, v; t\big) &= \big(G\ast \nabla u, \nabla v\big) + \varepsilon\big(u, v\big) + \varepsilon \big(\nabla u, \nabla v\big)\\
        &\le \|G\ast \nabla u\|_{L^{2}} \|\nabla v\|_{L^{2}} + \varepsilon \|u\|_{L^{2}}\|v\|_{L^{2}} + \varepsilon \|\nabla u\|_{L^{2}}\|\nabla v\|_{L^{2}}
    \end{aligned}
\end{equation}
By the Young's inequality (Lemma~\ref{youngs}),
\begin{equation}\label{A2:2}
    \|G\ast \nabla u\|_{L^{2}} \le \|G\|_{L^{1}}\|\nabla u\|_{L^{2}}
\end{equation}
Therefore, combining Eqs. \eqref{A2:1} and \eqref{A2:2}, we have
\begin{equation}
    \begin{aligned}
        a\big(u, v; t\big) \le \big(\varepsilon + \|G\|_{L^{1}}\big) \|\nabla u\|_{L^{2}} \|\nabla v\|_{L^{2}} + \varepsilon \|u\|_{L^{2}}\|v\|_{L^{2}}\le \big(2\varepsilon + \|G\|_{L^{1}}\big) \|u\|_{H^{1}} \|v\|_{H^{1}}
    \end{aligned}
\end{equation}

The coercive condition on $a\big(u, v; t\big)$ can be proved by using the Fourier transform,
\begin{equation}\label{A2:3}
    a\big(u, u; t\big) = \left(\mathscr{F}(G)\mathscr{F}(\nabla u), \overline{\mathscr{F}(\nabla u)}\right) + \varepsilon \|u\|_{H^{1}}^{2}.
\end{equation}
Since we have already assume that $\mathscr{F}(G) \ge 0$, we have
\begin{equation}
     \left(\mathscr{F}(G)\mathscr{F}(\nabla u), \overline{\mathscr{F}(\nabla u)}\right) = \int_{\mathbb{R}}\mathscr{F}(G)|\mathscr{F}(\nabla u)|^{2}dx \ge 0.
\end{equation}
Therefore, by Eq. \eqref{A2:3}, we have
\begin{equation}
    a\big(u, u; t\big) \ge \left(\mathscr{F}(G)\mathscr{F}(\nabla u), \overline{\mathscr{F}(\nabla u)}\right) + \varepsilon \|u\|_{H^{1}}^{2} \ge \varepsilon \|u\|_{H^{1}}^{2}.
\end{equation}

\section{Proof of Lemma \ref{th:5-1}}\label{ap:1}
In this section, we first prove the inverse inequalities~\eqref{eq:1-1} and~\eqref{eq:1-12} for 1D mapped Jacobi spectral expansions. Next, we extend the inverse inequalities to $d$-dimensional sparse mapped Jacobi spectral expansions defined in a hyperbolic cross space.

\subsection{Inverse inequalities for 1-dimensional mapped Jacobi spectral expansions}
1-D mapped Jacobi spectral expansion $U_{N}^{\beta, x_{0}}$ can be written as
\begin{equation}
    \begin{aligned}
        U_{N}^{\beta, x_{0}}(x) &= p_{N}(\xi) \mu_{\alpha_{1}, \alpha_{2}}(\xi),
    \end{aligned}
\end{equation}
Herein, $p_{N}(\xi)$ is a polynomial spanned by the Jacobi polynomials $\{j_{n}^{\alpha_{1}, \alpha_{2}}(\xi)\}_{n = 0}^{N}$,
\begin{equation}
    p_{N}(\xi) \coloneqq \sum_{n = 0}^{N}u_{n}^{\beta, x_{0}} j_{n}^{\alpha_{1}, \alpha_{2}}(\xi) \text{ where } \xi \coloneqq h_{\beta, r}(x - x_{0}),
\end{equation}
and $\mu_{\alpha_{1}, \alpha_{2}}(\xi)\coloneqq \sqrt{\omega^{\alpha_1, \alpha_2}(\xi)\frac{\text{d} \xi}{\text{d} x}}$ is the modified weight function.

First, we prove Eq. \eqref{eq:1-1} for mapped Jacobi spectral expansions, \textit{i.e.},
\begin{equation}\label{A3:1}
    \big\|\partial_{x}U_{N}^{\beta, x_{0}}\big\|_{L^{2}} \le \beta^{3/2}N_{\alpha, r}^{1/2} \big\|U_{N}^{\beta, x_{0}}\big\|_{L^{2}} 
\end{equation}

We define an auxiliary function (introduced in \cite{Tang2019})
\begin{equation}
    g(\xi) \coloneqq \beta (1 - \xi^{2}) \frac{\text{d}x}{\text{d}\xi} = \frac{1}{(1 - \xi^{2})^{r/2}}.
\end{equation}
It can be verified that $1 \le g(\xi)$. Thus, the following inequality holds true for $U_{N}^{\beta, x_{0}}$:
\begin{equation}\label{A3:2}
    \big\|\partial_{x} U_{N}^{\beta, x_{0}}\big\|^{2}_{L^{2}} \le \big\|g^{1/2} \partial_{x} U_{N}^{\beta, x_{0}}\big\|^{2}_{L^{2}} = \int_{\mathbb{R}} g(\xi)\big(\partial_{x}(p_{N}\mu_{\alpha_{1}, \alpha_{2}})\big)^{2}\d x.
\end{equation}

A direct computation shows that
\begin{equation}
    \begin{aligned}
        \partial_{x}\big(p_{N}(\xi) \mu_{\alpha_{1}, \alpha_{2}}(\xi)\big) &= \partial_{x}p_{N}(\xi) \mu_{\alpha_{1}, \alpha_{2}}(\xi) + p_{N}(\xi) \partial_{x}\mu_{\alpha_{1}, \alpha_{2}}(\xi)\\
        &= p_{N}^{\prime}\mu_{\alpha_{1}, \alpha_{2}} \frac{\text{d}\xi}{\text{d}x}  + p_{N}\mu_{\alpha_{1}, \alpha_{2}}^{\prime}\frac{\text{d}\xi}{\text{d}x}.
    \end{aligned}
\end{equation}
Therefore, from the Cauchy inequality, we have
\begin{equation}\label{p:1-1}
    \big\|\partial_{x}U_{N}^{\beta, x_{0}}(x)\big\|^{2}_{L^{2}} \le 2\int_{-1}^{1}g(\xi)\Big(p_{N}^{\prime}\mu_{\alpha_{1}, \alpha_{2}}\frac{\text{d}\xi}{\text{d}x}\Big)^{2}\text{d}\xi + 2\int_{-1}^{1}g(\xi)\Big(p_{N}\mu_{\alpha_{1}, \alpha_{2}}^{\prime}\frac{\text{d}\xi}{\text{d}x}\Big)^{2}\text{d}\xi
\end{equation}

To prove the inverse inequality Eq. \eqref{eq:1-1}, it is sufficient to derive a bound for the RHS of Eq. \eqref{p:1-1}. For the first term on RHS of Eq. \eqref{p:1-1}, we have
\begin{equation}\label{p:2-5}
    \begin{aligned}
        g(\xi)\Big(p_{N}^{\prime} \mu_{\alpha_{1}, \alpha_{2}}\frac{\text{d}\xi}{\text{d}x}\Big)^{2} &= \beta (1 - \xi^{2}) \frac{\text{d}x}{\text{d}\xi} (p_{N}^{\prime})^{2} \mu_{\alpha_{1}, \alpha_{2}}^{2} \Big(\frac{\text{d}\xi}{\text{d}x}\Big)^{2}\\
        & = \beta (1 - \xi^{2}) \frac{\text{d}x}{\text{d}\xi} (p_{N}^{\prime})^{2} w^{\alpha_{1}, \alpha_{2}} \Big(\frac{\text{d}\xi}{\text{d}x}\Big)^{3}
%        & = \beta (p_{N}^{\prime})^{2} (1 - \xi^{2}) w^{\alpha_{1}, \alpha_{2}} \Big(\frac{\text{d}\xi}{\text{d}x}\Big)^{2}\\
        = \beta (p_{N}^{\prime})^{2} w^{\alpha_{1} + 1, \alpha_{2} + 1}\Big(\frac{\text{d}\xi}{\text{d}x}\Big)^{2}.
    \end{aligned}
\end{equation}

Integrating Eq. \eqref{p:2-5} over $\xi\in[1, 1]$, we have
\begin{equation}\label{p:2-6}
    \begin{aligned}
        \int_{-1}^{1}g(\xi)\Big(p_{N}^{\prime}\mu_{\alpha_{1}, \alpha_{2}} \frac{\text{d}\xi}{\text{d}x} \Big)^{2}\text{d}\xi & = \int_{-1}^{1} \beta (p_{N}^{\prime})^{2} w^{\alpha_{1} + 1, \alpha_{2} + 1} \Big(\frac{\text{d}\xi}{\text{d}x}\Big)^{2}\text{d}\xi\\
        &= \beta^{3} \int_{-1}^{1} (p_{N}^{\prime})^{2} w^{\alpha_{1} + 1, \alpha_{2} + 1}(1 - \xi^{2})^{2 + r} \text{d}\xi\le \beta^{3}\int_{-1}^{1}(p_{N}^{\prime})^{2} w^{\alpha_{1} + 1, \alpha_{2} + 1}\text{d}\xi
    \end{aligned}
\end{equation}
Using the inverse inequalities of Jacobi polynomials \citep[Theorem 3.31]{Shen2011}, we have
\begin{equation}\label{A3:3}
    \begin{aligned}
        &\int_{-1}^{1}g(\xi) \Big(p_{N}^{\prime}\mu_{\alpha_{1}, \alpha_{2}} \frac{\text{d}\xi}{\text{d}x} \Big)^{2}\text{d}\xi \le \int_{-1}^{1}(p_{N}^{\prime})^{2} w^{\alpha_{1} + 1, \alpha_{2} + 1}\text{d}\xi\\
        &\hspace{2cm}\le \beta^{3}\lambda_{N}^{\alpha_{1}, \alpha_{2}}\int_{-1}^{1} p_{N}^{2} w^{\alpha_{1}, \alpha_{2}}\text{d}\xi= \beta^{3}\lambda_{N}^{\alpha_{1}, \alpha_{2}} \int_{\mathbb{R}} (U_{N}^{\beta, x_{0}})^{2} \text{d}x,
    \end{aligned}
\end{equation}
where $\lambda_{N}^{\alpha_{1}, \alpha_{2}} \coloneqq N(N + \alpha_{1} + \alpha_{2} + 1)$.

Furthermore, for the second term on the RHS of Eq. \eqref{p:1-1}, we have
\begin{equation}\label{p:2-9}
    \begin{aligned}
        g(\xi)\Big(p_{N} \mu_{\alpha_{1}, \alpha_{2}}^{\prime} \frac{\text{d}\xi}{\text{d}x}\Big)^{2} &= g(\xi)\Big(p_{N} \frac{\mu_{\alpha_{1}, \alpha_{2}}^{\prime}}{\mu_{\alpha_{1}, \alpha_{2}}}\mu_{\alpha_{1}, \alpha_{2}} \frac{\text{d}\xi}{\text{d}x}\Big)^{2}\\
        &= g(\xi)\Big(\frac{\mu_{\alpha_{1}, \alpha_{2}}^{\prime}}{\mu_{\alpha_{1}, \alpha_{2}}} \Big(\frac{\text{d}\xi}{dx}\Big)^{3/2}\Big) ^{2} \Big(p_{N}\mu_{\alpha_{1}, \alpha_{2}} \Big(\frac{\text{d}x}{\text{d}\xi}\Big)^{1/2}\Big)^{2}
    \end{aligned}
\end{equation}
Additionally, a direct computation shows that
\begin{equation}\label{p:2-7}
    \begin{aligned}
        g(\xi)^{1/2}\frac{\mu_{\alpha_{1}, \alpha_{2}}^{\prime}}{\mu_{\alpha_{1}, \alpha_{2}}} &\Big(\frac{\text{d}\xi}{\text{d}x}\Big)^{3/2} = \Big(\beta (1 - \xi^{2}) \frac{\text{d}x}{\text{d}\xi} \Big)^{1/2} \Big(\frac{\mu_{\alpha_{1}, \alpha_{2}}^{\prime}}{\mu_{\alpha_{1}, \alpha_{2}}}\Big) \Big(\frac{\text{d}\xi}{\text{d}x}\Big)^{3/2}\\
        &= \beta^{1/2} (1 - \xi^{2})^{1/2} \Big(\frac{\mu_{\alpha_{1}, \alpha_{2}}^{\prime}}{\mu_{\alpha_{1}, \alpha_{2}}}\Big) \frac{\text{d}\xi}{\text{d}x}\\
        &= \beta^{3/2}(1 - \xi^{2})^{(3 + r)/2}\frac{(w^{\alpha_{1}, \alpha_{2}})^{\prime} \frac{\text{d}\xi}{\text{d}x} - \beta w^{\alpha_{1}, \alpha_{2}} (2 + r) \xi  (1 - \xi^{2})^{r/2}}{2 w^{\alpha_{1}, \alpha_{2}}\frac{\text{d}\xi}{\text{d}x}}\\
        &= \beta^{3/2}(1 - \xi^{2})^{(3 + r)/2}\Big(\frac{(w^{\alpha_{1}, \alpha_{2}})^{\prime}}{2 w^{\alpha_{1}, \alpha_{2}}} - \frac{\beta (2 + r) \xi (1 - \xi^{2})^{r/2}}{2\frac{\text{d}\xi}{\text{d}x}}\Big)\\
        %&= \beta^{3/2}(1 - \xi^{2})^{(3 + r)/2}\Big(\frac{(w^{\alpha_{1}, \alpha_{2}})^{\prime}}{2 w^{\alpha_{1}, \alpha_{2}}} - \frac{(1 + r/2) \xi (1 - \xi^{2})^{r/2}}{(1 - \xi^{2})^{1 + r/2}}\Big)\\
        &= \frac{1}{2}\beta^{3/2}(1 - \xi^{2})^{(1 + r)/2} \big(-\alpha_{1}(1 + \xi) + \alpha_{2}(1 - \xi) + \xi(2 + r)\big)\\
        &\le \beta^{3/2} (1 + \alpha_{1} + \alpha_{2} + r/2).
    \end{aligned}
\end{equation}
Therefore, we have
\begin{equation}\label{A3:5}
    \begin{aligned}
        g(\xi)\Big(p_{N}(\xi) \mu_{\alpha_{1}, \alpha_{2}}^{\prime}\frac{\text{d}\xi}{\text{d}x}\Big)^{2} &= g(\xi)\Big(\frac{\mu_{\alpha_{1}, \alpha_{2}}^{\prime}}{\mu_{\alpha_{1}, \alpha_{2}}} \Big(\frac{\text{d}\xi}{dx}\Big)^{3/2}\Big)^{2} \Big(p_{N}\mu_{\alpha_{1}, \alpha_{2}} \Big(\frac{\text{d}x}{\text{d}\xi}\Big)^{1/2}\Big)^{2}\\ &\le \beta^{3} (1 + \alpha_{1} + \alpha_{2} + r/2)^{2} \Big(p_{N}\mu_{\alpha_{1}, \alpha_{2}} \Big(\frac{\text{d}x}{\text{d}\xi}\Big)^{1/2}\Big)^{2}.
    \end{aligned}
\end{equation}
By integrating Eq. \eqref{A3:5} over $\xi\in[-1, 1]$, we have
\begin{equation}\label{A3:7}
    \int_{-1}^{1}g(\xi)\Big(p_{N}(\xi) \mu_{\alpha_{1}, \alpha_{2}}^{\prime}\frac{\text{d}\xi}{\text{d}x}\Big)^{2}\text{d}\xi \le \beta^{3} (1 + \alpha_{1} + \alpha_{2} + r/2)^{2} \|U_{N}^{\beta, x_{0}}(x)\|^{2}_{L^{2}}
\end{equation}

Finally, plugging in the estimation of the two terms on the RHS of Eq. \eqref{p:1-1} using Eqs. \eqref{A3:3} and \eqref{A3:7}, we have
\begin{equation}
    \|\partial_{x}U_{N}^{\beta, x_{0}}(x)\|^{2}_{L^{2}} \le \beta^{3} N_{\alpha, r} \|U_{N}^{\beta, x_{0}}(x)\|^{2}_{L^{2}},
\end{equation}
where $N_{\alpha, r} = 2N(N + \alpha_{1} + \alpha_{2} + 1) + 2(1 + \alpha_{1} + \alpha_{2} + r/2)^{2}$.

\vspace{0.1in}
Next, we prove Eq. \eqref{eq:1-12} for mapped Jacobi spectral expansions, \textit{i.e.},
\begin{equation}\label{A3:8}
    \big\|x\partial_{x}U_{N}^{\beta, x_{0}}(x)\big\|_{L^{2}} \le \beta^{1/2}N_{\alpha, r}^{1/2} \big\|U_{N}^{\beta, x_{0}}(x)\big\|_{L^{2}},
\end{equation}
we additionally assume that $r \le 1$. When $r = 1$, we have
\begin{equation}
    \xi =h_{\beta, 1}(x)= \frac{\beta x}{\sqrt{1 + \beta^{2} x^{2}}}.
\end{equation}
Also, we have $\big(1 - \xi^{2}\big)^{-1} = 1 + (\beta x)^{2} \ge (\beta x)^2$.
When $r \le 1$, we have $h_{\beta, r}^{2}(x) \ge h_{\beta, 1}^{2}(x) \quad, \forall x \in \mathbb{R}$. Therefore, we can show that
\begin{equation}\label{p:2-8}
    \beta^{2}x^{2} \le \big(1 - h_{\beta, r}(x)^{2}\big)^{-1},~\forall r \le 1.
\end{equation}

Similar to Eq. \eqref{p:1-1}, we have
\begin{equation}\label{A3:6}
    \|x\partial_{x} U_{N}^{\beta, x_{0}}\|^{2}_{L^{2}} \le 2\int_{-1}^{1}x^{2}g(\xi)\Big(p_{N}^{\prime}\mu_{\alpha_{1}, \alpha_{2}}\frac{\text{d}\xi}{\text{d}x} \Big)^{2} + x^{2}g(\xi)\Big(p_{N}\mu_{\alpha_{1}, \alpha_{2}}^{\prime} \frac{\text{d}\xi}{\text{d}x}\Big)^{2}\text{d}\xi.
\end{equation}

Furthermore, the upper bound for the first term on the RHS of Eq. \eqref{A3:6} is similar to Eq. \eqref{p:2-5}:
\begin{equation}\label{p:2-11}
    \begin{aligned}
        \int_{-1}^{1}x^{2} g(\xi) \Big(p_{N}^{\prime}\mu_{\alpha_{1}, \alpha_{2}} & \frac{\text{d}\xi}{\text{d}x}\Big)^{2}\text{d}\xi = \beta \int_{-1}^{1} x^{2} (p_{N}^{\prime})^{2} w^{\alpha_{1} + 1, \alpha_{2} + 1}\left(\frac{\text{d}\xi}{\text{d}x}\right)^{2}\text{d}\xi\\
        &= \beta^{3} \int_{-1}^{1} x^{2} (p_{N}^{\prime})^{2} w^{\alpha_{1} + 1, \alpha_{2} + 1}(1 - \xi^{2})^{2 + r} \text{d}\xi\\
        &\le \beta \int_{-1}^{1}  (p_{N}^{\prime})^{2} w^{\alpha_{1} + 1, \alpha_{2} + 1} (1 - \xi^{2})^{1 + r} \text{d}\xi\\
        &\le \beta\int_{-1}^{1}(p_{N}(\xi)^{\prime})^{2} w^{\alpha_{1} + 1, \alpha_{2} + 1}\text{d}\xi\\
        &\leq \beta \lambda_{N}^{\alpha_{1}, \alpha_{2}} \int_{\mathbb{R}} (U_{N,x_{0}}^{\beta})^{2} \text{d}x.
    \end{aligned}
\end{equation}

Additionally, the upper bound for the second term on the RHS of Eq. \eqref{A3:6} is similar to Eq. \eqref{p:2-9}:
\begin{equation}
    \begin{aligned}
        x^{2}g(x)\Big(\frac{\mu_{\alpha_{1}, \alpha_{2}}^{\prime}}{\mu_{\alpha_{1}, \alpha_{2}}}\Big)^{2} \Big(\frac{\text{d}\xi}{\text{d}x}\Big)^{3} &= \frac{x^{2} \beta^{3}}{4}(1 - \xi^{2})^{1 + r} \big(\xi (2 + r) -(1 + \xi) \alpha_{1} + (1 - \xi) \alpha_{2}\big)^{2}\\
        &\le \frac{1}{4} \beta (1 - \xi^{2})^{r} \big(-(1 + \xi) \alpha_{1} + (1 - \xi) \alpha_{2} + \xi (2 + r)\big)^{2}\\
        &\le \beta (1 + \alpha_{1} + \alpha_{2} + r/2)^{2}.
    \end{aligned}
\end{equation}
Therefore, we have
\begin{equation}\label{p:2-12}
    \begin{aligned}
        \int_{-1}^{1} x^{2}g(\xi)\Big(p_{N}\mu_{\alpha_{1}, \alpha_{2}}^{\prime} \frac{\text{d}\xi}{\text{d}x}\Big)^{2}\text{d}\xi &= \int_{-1}^{1}x^{2}g(\xi)\Big(\frac{\mu_{\alpha_{1}, \alpha_{2}}^{\prime}}{\mu_{\alpha_{1}, \alpha_{2}}}\Big)^{2} \Big(\frac{\text{d}\xi}{\text{d}x}\Big)^{3} \Big(p_{N}\mu_{\alpha_{1}, \alpha_{2}} \Big(\frac{\text{d}x}{\text{d}\xi}\Big)^{1/2}\Big)^{2} \text{d}\xi\\
        &\le \beta (1 + \alpha_{1} + \alpha_{2} + r/2)^{2} \int_{-1}^{1} \Big(p_{N}\mu_{\alpha_{1}, \alpha_{2}} \Big(\frac{\text{d}x}{\text{d}\xi}\Big)^{1/2}\Big)^{2} \text{d}\xi\\
        &= \beta (1 + \alpha_{1} + \alpha_{2} + r/2)^{2} \|U_{N}^{\beta, x_{0}}\|_{L^{2}}^{2}
    \end{aligned}
\end{equation}

Finally, by combining Eqs. \eqref{p:2-11} and \eqref{p:2-12}, we are able to show that
\begin{equation}
    \|x\partial_{x}U_{N} ^{\beta, x_{0}}(x)\|^{2}_{L^{2}} \le \beta N_{\alpha, r} \|U_{N}^{\beta, x_{0}}(x)\|^{2}_{L^{2}}.
\end{equation}

\subsection{Inverse inequalities for $d$-dimensional mapped Jacobi spectral expansions}
Now, we extend the inverse inequalities Eqs. \eqref{A3:1} and \eqref{A3:8} to a $d$-dimensional ($d>1$) mapped Jacobi spectral expansions defined in a hyperbolic cross space
\begin{equation}
    U_{N, \gamma}^{\boldsymbol{\beta}, \boldsymbol{x}_{0}}(\boldsymbol{x}) = \sum_{\boldsymbol{n} \in \Upsilon_{N, \gamma}} u_{\boldsymbol{n}}^{\boldsymbol{\beta}, \boldsymbol{x}_{0}}\mathcal{J}_{\boldsymbol{n}}^{\boldsymbol{\beta}, \boldsymbol{x}_{0}}(\boldsymbol{x})
\end{equation}

Because the hyperbolic cross index set $\Upsilon_{N, \gamma}$ is a subset of $\mathbb{N}^{d}$, we can rewrite
\begin{equation}
    U_{N, \gamma}^{\boldsymbol{\beta}, \boldsymbol{x}_{0}}(\boldsymbol{x}) = \sum_{\boldsymbol{n} \in \mathbb{N}^{d}} u_{\boldsymbol{n}}^{\boldsymbol{\beta}, \boldsymbol{x}_{0}}\mathcal{J}_{\boldsymbol{n}}^{\boldsymbol{\beta}, \boldsymbol{x}_{0}}(\boldsymbol{x}) \text{ where } u_{\boldsymbol{n}}^{\boldsymbol{\beta}, \boldsymbol{x}_{0}} = 0,~\forall \boldsymbol{n} \in \mathbb{N}^{d} - \Upsilon_{N, \gamma}
\end{equation}
A direct computation shows that
\begin{equation}
    \big\|\partial_{x_{i}}U_{N, \gamma}^{\boldsymbol{\beta}, \boldsymbol{x}_{0}}(\boldsymbol{x})\big\|_{L^{2}}^{2} = \sum_{\boldsymbol{m} \in \mathbb{N}^{d}}\sum_{\boldsymbol{n} \in \mathbb{N}^{d}} u_{\boldsymbol{m}}^{\boldsymbol{\beta}, \boldsymbol{x}_{0}} u_{\boldsymbol{n}}^{\boldsymbol{\beta}, \boldsymbol{x}_{0}}\big(\partial_{x_{i}}\mathcal{J}_{\boldsymbol{m}}^{\boldsymbol{\beta}, \boldsymbol{x}_{0}}, \partial_{x_{i}}\mathcal{J}_{\boldsymbol{n}}^{\boldsymbol{\beta}, \boldsymbol{x}_{0}}\big).
\end{equation}

Tensorial mapped Jacobi basis functions form an orthonormal basis of $L^{2}(\mathbb{R}^{d})$:
\begin{equation}
    \big(\partial_{x_{i}} \mathcal{J}_{\boldsymbol{m}}^{\boldsymbol{\beta}, \boldsymbol{x}_{0}}, \partial_{x_{i}}\mathcal{J}_{\boldsymbol{n}}^{\boldsymbol{\beta}, \boldsymbol{x}_{0}}\big) = \prod_{j\neq i, j=1}^d\delta_{m_{j}, n_{j}}\times\big(\partial_{x_{i}} \mathcal{J}_{m_{i}}^{\beta_{i}, {x_{0}}_{i}}, \partial_{x_{i}}\mathcal{J}_{n_{i}}^{\boldsymbol{\beta}, \boldsymbol{x}_{0}}\big),
\end{equation}
where $\delta_{i, j}=1$ if $i=j$ and $\delta_{i, j}=0$ if $i\neq j$. Therefore, we have
\begin{equation}
    \begin{aligned}
        \big\|\partial_{x_{i}}U_{N, \gamma}^{\boldsymbol{\beta}, \boldsymbol{x}_{0}}(\boldsymbol{x})\big\|_{L^{2}}^{2} & = \sum_{\boldsymbol{m} \in \mathbb{N}^{d}}\sum_{\boldsymbol{n} \in \mathbb{N}^{d}}u_{\boldsymbol{m}}^{\boldsymbol{\beta}, \boldsymbol{x}_{0}} u_{\boldsymbol{n}}^{\boldsymbol{\beta}, \boldsymbol{x}_{0}}\big(\partial_{x_{i}}\mathcal{J}_{\boldsymbol{m}}^{\boldsymbol{\beta}, \boldsymbol{x}_{0}}, \partial_{x_{i}}\mathcal{J}_{\boldsymbol{n}}^{\boldsymbol{\beta}, \boldsymbol{x}_{0}}\big)\\
        &= \sum_{\boldsymbol{n}\in \mathbb{N}^{d}}\sum_{m_{i} = 1}^{N} u_{n_{1}, \cdots, m_{i},\cdots, n_{d}}^{\boldsymbol{\beta}, \boldsymbol{x}_{0}} u_{\boldsymbol{n}}^{\boldsymbol{\beta}, \boldsymbol{x}_{0}}\big(\partial_{x_{i}}\mathcal{J}_{n_{1}, \cdots, m_{i},\cdots, n_{d}}^{\boldsymbol{\beta}, \boldsymbol{x}_{0}}, \partial_{x_{i}}\mathcal{J}_{\boldsymbol{n}}^{\boldsymbol{\beta}, \boldsymbol{x}_{0}}\big)\\
        &= \sum_{\boldsymbol{n}\in \mathbb{N}^{d}}\sum_{m_{i} = 1}^{N} u_{n_{1}, \cdots, m_{i},\cdots, n_{d}}^{\boldsymbol{\beta}, \boldsymbol{x}_{0}} u_{\boldsymbol{n}}^{\boldsymbol{\beta}, \boldsymbol{x}_{0}}\big(\partial_{x_{i}} \mathcal{J}_{m_{i}}^{\beta_{i}, {x_{0}}_{i}}, \partial_{x_{i}}\mathcal{J}_{\boldsymbol{n}}^{\boldsymbol{\beta}, \boldsymbol{x}_{0}}\big)
    \end{aligned}
\end{equation}
By using the inverse inequality Eq. \eqref{A3:1}, we can prove that
\begin{equation}
    \begin{aligned}
        \big\|\partial_{x_{i}}U_{N, \gamma}^{\boldsymbol{\beta}, \boldsymbol{x}_{0}}(\boldsymbol{x})\big\|_{L^{2}}^{2} &=\sum_{\boldsymbol{n}\in \mathbb{N}^{d}}\sum_{m_{i} = 1}^{N} u_{n_{1}, \cdots, m_{i},\cdots, n_{d}}^{\boldsymbol{\beta}, \boldsymbol{x}_{0}} u_{\boldsymbol{n}}^{\boldsymbol{\beta}, \boldsymbol{x}_{0}}\big(\partial_{x_{i}} \mathcal{J}_{m_{i}}^{\beta_{i}, {x_{0}}_{i}}, \partial_{x_{i}}\mathcal{J}_{n_{i}}^{\boldsymbol{\beta}, \boldsymbol{x}_{0}}\big)\\
        &\le \beta_{i}^{3} N_{\alpha, r} \sum_{\boldsymbol{n}\in \mathbb{N}^{d}}\sum_{m_{i} = 1}^{N} u_{n_{1}, \cdots, m_{i},\cdots, n_{d}}^{\boldsymbol{\beta}, \boldsymbol{x}_{0}} u_{\boldsymbol{n}}^{\boldsymbol{\beta}, \boldsymbol{x}_{0}} \delta_{m_{i}, n_{i}}\\
        &= \beta_{i}^{3} N_{\alpha, r} \sum_{\boldsymbol{n}\in \mathbb{N}^{d}}\big(u_{\boldsymbol{n}}^{\boldsymbol{\beta}, \boldsymbol{x}_{0}}\big)^{2}\\
        &= \beta_{i}^{3} N_{\alpha, r} \big\|U_{N, \gamma}^{\boldsymbol{\beta}, \boldsymbol{x}_{0}}(\boldsymbol{x})\big\|_{L^{2}}^{2}
    \end{aligned}
\end{equation}

Similarly, by using Eq. \eqref{A3:8}, we can show that
\begin{equation}
    \begin{aligned}
        \big\|x_{i}\partial_{x_{i}}U_{N, \gamma}^{\boldsymbol{\beta}, \boldsymbol{x}_{0}}(\boldsymbol{x})\big\|_{L^{2}}^{2} &= \sum_{\boldsymbol{n}\in \mathbb{N}^{d}}\sum_{m_{i} = 1}^{N} u_{n_{1}, \cdots, m_{i},\cdots, n_{d}}^{\boldsymbol{\beta}, \boldsymbol{x}_{0}} u_{\boldsymbol{n}}^{\boldsymbol{\beta}, \boldsymbol{x}_{0}}\big(x_{i}\partial_{x_{i}} \mathcal{J}_{m_{i}}^{\beta_{i}, {x_{0}}_{i}}, x_{i}\partial_{x_{i}}\mathcal{J}_{n_{i}}^{\boldsymbol{\beta}, \boldsymbol{x}_{0}}\big)\\
        &\le \beta_{i} N_{\alpha, r} \big\|U_{N, \gamma}^{\boldsymbol{\beta}, \boldsymbol{x}_{0}}(\boldsymbol{x})\big\|_{L^{2}}^{2},
    \end{aligned}
\end{equation}
which proves Eq. \eqref{eq:1-12} for sparse mapped Jacobi spectral expansions defined in a hyperbolic cross space when $\boldsymbol{x}\in\mathbb{R}^d$.

\section{Proof of Theorem \ref{th:6-1}}\label{ap:3}
In this section, we prove the error bound Eq. \eqref{eq:2-20} for the mapped Jacobi spectral approximation in Eq.~\eqref{eq:4-23}.

\subsection{Properties of the operator $a\big(u, v; t\big)$ in Eq.~\eqref{eq:1-4}}
First, we define the following bilinear form:
\begin{equation}\label{p:1-16}
    B(u, \boldsymbol{v}; s) \coloneqq \int_{t_{\ell}}^{s}\big(\partial_{t}u, v\big) + a\big(u, v; t\big)\textnormal{d}t + \big(u(\cdot, t_{\ell}), \Tilde{v}\big),
\end{equation}
where $a\big(u, v; t\big)$ is the symmetric bilinear operator in Eq. \eqref{eq:1-4}. We can show that the bilinear form $B(u,\boldsymbol{v}; s)$ defined in Eq. \eqref{p:1-16} satisfies the following continuity and inf-sup conditions \cite{Ueda2018}.
\begin{lemma}\label{B1:3}
There exist two constants $B_{0}$ and $b_{0}$ such that
\begin{equation}
    \begin{aligned}
        B(u, \boldsymbol{v}; s) \le B_{0} \|u\|_{X(t_{\ell}, s)}\|\boldsymbol{v}\|_{Y(t_{\ell}, s)},\,\,
        b_{0} \le \inf_{u \neq 0} \sup_{\boldsymbol{v} \neq 0} \frac{B(u, \boldsymbol{v}; s)}{\|u\|_{X(t_{\ell}, s)} \|\boldsymbol{v}\|_{Y(t_{\ell}, s)}},
    \end{aligned}
\end{equation}
\end{lemma}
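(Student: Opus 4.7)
The plan is to prove the continuity and the inf-sup condition separately, both by exploiting the continuity/coercivity of the bilinear form $a(u,v;t)$ together with standard parabolic energy identities.

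For the \textbf{continuity bound}, I would simply estimate each of the three terms in $B(u,\boldsymbol{v};s)$. The term $\int_{t_\ell}^{s}(\partial_{t}u, v)\,\mathrm{d}t$ is controlled by the $H^{1}$--$H^{-1}$ duality pairing at each time and then Cauchy--Schwarz in time, giving $\|\partial_{t}u\|_{L^{2}([t_\ell,s];H^{-1})}\|v\|_{L^{2}([t_\ell,s];H^{1})}$. The term $\int_{t_\ell}^{s} a(u,v;t)\,\mathrm{d}t$ is bounded by $C_{0}\|u\|_{L^{2}([t_\ell,s];H^{1})}\|v\|_{L^{2}([t_\ell,s];H^{1})}$ using the continuity assumption in Eq.~\eqref{eq:1-4}, and $(u(\cdot,t_\ell),\tilde{v})\le \|u(\cdot,t_\ell)\|_{L^{2}}\|\tilde{v}\|_{L^{2}}$. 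Combining these via Cauchy--Schwarz in the definitions of $\|\cdot\|_{X(t_\ell,s)}$ and $\|\cdot\|_{Y(t_\ell,s)}$ yields $B(u,\boldsymbol{v};s)\le B_{0}\|u\|_{X(t_\ell,s)}\|\boldsymbol{v}\|_{Y(t_\ell,s)}$ with $B_{0}$ depending only on $C_{0}$.

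For the \textbf{inf-sup bound}, which I expect to be the main obstacle, the plan is to construct, for each nonzero $u\in X(t_\ell,s)$, an explicit near-optimal test pair $\boldsymbol{v}=(v,\tilde{v})$. Concretely, define $w(\cdot,t)\in H^{1}(\mathbb{R}^{d})$ to be the Riesz representative of $\partial_{t}u(\cdot,t)\in H^{-1}(\mathbb{R}^{d})$ with respect to the $H^{1}$ inner product, so that $(w,\phi)_{H^{1}}=\langle\partial_{t}u,\phi\rangle$ for all $\phi\in H^{1}$ and $\|w(\cdot,t)\|_{H^{1}}=\|\partial_{t}u(\cdot,t)\|_{H^{-1}}$; measurability of $t\mapsto w(\cdot,t)$ follows from the isometry. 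Then set $v=u+\lambda w$ and $\tilde{v}=u(\cdot,t_\ell)$ for some small $\lambda>0$ to be fixed. Using the Lions-type identity $\int_{t_\ell}^{s}(\partial_{t}u,u)\,\mathrm{d}t=\tfrac{1}{2}\|u(\cdot,s)\|_{L^{2}}^{2}-\tfrac{1}{2}\|u(\cdot,t_\ell)\|_{L^{2}}^{2}$, the coercivity bound $a(u,u;t)\ge c_{0}\|u\|_{H^{1}}^{2}$, and the identity $(\partial_{t}u,w)=\|\partial_{t}u\|_{H^{-1}}^{2}$, I expand
\begin{equation*}
    B(u,\boldsymbol{v};s)\ge \tfrac{1}{2}\|u(\cdot,s)\|_{L^{2}}^{2}+\tfrac{1}{2}\|u(\cdot,t_\ell)\|_{L^{2}}^{2}+c_{0}\!\!\int_{t_\ell}^{s}\!\!\|u\|_{H^{1}}^{2}\mathrm{d}t+\lambda\!\!\int_{t_\ell}^{s}\!\!\|\partial_{t}u\|_{H^{-1}}^{2}\mathrm{d}t-\lambda C_{0}\!\!\int_{t_\ell}^{s}\!\!\|u\|_{H^{1}}\|\partial_{t}u\|_{H^{-1}}\mathrm{d}t.
\end{equation*}
Applying Young's inequality to the last term in the form $\lambda C_{0}\|u\|_{H^{1}}\|\partial_{t}u\|_{H^{-1}}\le \tfrac{c_{0}}{2}\|u\|_{H^{1}}^{2}+\tfrac{\lambda^{2}C_{0}^{2}}{2c_{0}}\|\partial_{t}u\|_{H^{-1}}^{2}$ and choosing $\lambda$ small enough (e.g.\ $\lambda<c_{0}/C_{0}^{2}$) produces a lower bound of the form $\tilde{c}\,\|u\|_{X(t_\ell,s)}^{2}$ for some $\tilde{c}>0$.

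Finally I would bound $\|\boldsymbol{v}\|_{Y(t_\ell,s)}$ from above: since $\|w\|_{L^{2}([t_\ell,s];H^{1})}=\|\partial_{t}u\|_{L^{2}([t_\ell,s];H^{-1})}$, one has $\|v\|_{L^{2}([t_\ell,s];H^{1})}\le \|u\|_{L^{2}([t_\ell,s];H^{1})}+\lambda\|\partial_{t}u\|_{L^{2}([t_\ell,s];H^{-1})}\le C\|u\|_{X(t_\ell,s)}$, and $\|\tilde{v}\|_{L^{2}}=\|u(\cdot,t_\ell)\|_{L^{2}}\le\|u\|_{X(t_\ell,s)}$, so $\|\boldsymbol{v}\|_{Y(t_\ell,s)}\le C_{v}\|u\|_{X(t_\ell,s)}$. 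Combining this with the lower bound gives $B(u,\boldsymbol{v};s)/(\|u\|_{X(t_\ell,s)}\|\boldsymbol{v}\|_{Y(t_\ell,s)})\ge \tilde{c}/C_{v}\eqqcolon b_{0}$, which holds uniformly in $u\ne 0$ and therefore yields the desired inf-sup inequality. The hardest step is identifying a test function $\boldsymbol{v}$ that simultaneously controls the $\|\partial_{t}u\|_{H^{-1}}$, $\|u\|_{H^{1}}$, and $\|u(\cdot,t_\ell)\|_{L^{2}}$ components of the $X$-norm; the Riesz-lift construction combined with choosing $\tilde{v}=u(\cdot,t_\ell)$ is precisely what makes all three pieces appear with favorable signs.
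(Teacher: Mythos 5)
Your proposal is correct. The continuity bound is proved exactly as in the paper (term-by-term duality/continuity estimates plus Cauchy--Schwarz, giving $B_{0}$ depending only on $C_{0}$). For the inf-sup condition you follow the same overall strategy as the paper --- augment the test function $v$ by a lifted copy of $\partial_{t}u$ and take $\tilde v$ proportional to $u(\cdot,t_{\ell})$ --- but your lift is genuinely different. The paper uses the operator $A^{-1}(t):H^{-1}\to H^{1}$ induced by $a$ via Lax--Milgram and sets $\boldsymbol{v}_{0}=\big(u+c_{0}A^{-1}(t)\partial_{t}u,\ \varepsilon u(\cdot,t_{\ell})\big)$ with $\varepsilon=(1+3c_{0})/2$; the payoff is that, by the assumed symmetry of $a$, the cross term collapses exactly, $a\big(u,A^{-1}(t)\partial_{t}u;t\big)=(\partial_{t}u,u)$, so no Young-type absorption is needed there (the price is the enlarged coefficient $(1+c_{0})$ on $\int(\partial_{t}u,u)\,\mathrm{d}t$, which forces $\varepsilon>(1+c_{0})/2$ to keep the initial-data term positive). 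You instead use the time-independent Riesz isometry $H^{-1}\to H^{1}$, which makes $(\partial_{t}u,w)=\|\partial_{t}u\|_{H^{-1}}^{2}$ exact but leaves a cross term $\lambda\,a(u,w;t)$ that you absorb with Young's inequality by taking $\lambda<c_{0}/C_{0}^{2}$; your coefficient bookkeeping ($c_{0}/2$ on $\|u\|_{H^{1}}^{2}$, at least $\lambda/2$ on $\|\partial_{t}u\|_{H^{-1}}^{2}$, and $1/2$ on $\|u(\cdot,t_{\ell})\|_{L^{2}}^{2}$) checks out, as does the bound $\|\boldsymbol{v}\|_{Y(t_{\ell},s)}\le C_{v}\|u\|_{X(t_{\ell},s)}$ using the isometry property of the lift. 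Your route has the minor advantages of not using the symmetry of $a$ and of using a time-independent lift (so measurability in $t$ is immediate), at the cost of a slightly less clean constant; both yield an explicit $b_{0}$ depending only on $c_{0}$ and $C_{0}$.
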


\begin{proof}
Since 
\begin{equation}
    a(u, v; t) \le C_{0}\|u\|_{H^{1}} \|v\|_{H^{1}},
\end{equation}
we can prove that
\begin{equation}
    \begin{aligned}
        B(u, \boldsymbol{v}; s) &\le \int_{t_{\ell}}^{s}\big(\partial_{t}u, v\big) + C_{0}\|u\|_{H^{1}} \|v\|_{H^{1}}\textnormal{d}t + \big(u(\cdot, t_{\ell}), \Tilde{v}\big)\\
        &\le \int_{t_{\ell}}^{s}\|\partial_{t}u\|_{H^{-1}} \|v\|_{H^{1}} + C_{0}\|u\|_{H^{1}} \|v\|_{H^{1}}\textnormal{d}t + \|u(\cdot, t_{\ell})\|_{L^{2}} \|\Tilde{v}\|_{L^{2}},
    \end{aligned}
\end{equation}
By using the Cauchy-Schwartz inequality, we have
\begin{equation}\label{ap:2-1}
    B(u, \boldsymbol{v}; s) \le B_{0} \|u\|_{X(t_{\ell}, s)}\|\boldsymbol{v}\|_{Y(t_{\ell}, s)} \text{ where } B_{0} \coloneqq \sqrt{2 + 2C_{0}^{2}}.
\end{equation}

Additionally, since
\begin{equation}
    a\big(u, v; t\big) \le C_{0}\|u\|_{H^{1}} \|v\|_{H^{1}}, \,\,
    c_{0}\|u\|_{H^{1}}^{2} \le a\big(u, u; t\big),
\end{equation}
by the Lax-Milgram theorem, $\forall w \in H^{-1}(\mathbb{R}^{d})$, there exists a unique $u \in H^{1}(\mathbb{R}^{d})$ such that
\begin{equation}
    a(u, v; t) = (w, v),~ \forall v\in H^{1}(\mathbb{R}^{d}).
\end{equation}
Therefore, the bilinear form $a(u, v; t)$ defines an linear operator
\begin{equation}
    A^{-1}(t): H^{-1}(\mathbb{R}^{d}) \to H^{1}(\mathbb{R}^{d}),
\end{equation}
such that
\begin{equation}\label{ap:2-18}
    a\big(A^{-1}(t)u, v; t\big) = (u, v).
\end{equation}
Furthermore, it has been shown in \citep[Lemma 5.1]{Ueda2018} that the linear operator $A^{-1}(t)$ satisfies
\begin{equation}\label{ap:2-4}
        \big\|A^{-1}(t)u\big\|_{H^{1}} \le \frac{1}{c_{0}} \|u\|_{H^{-1}}, \,\,
        \frac{1}{C_{0}} \|u\|_{H^{-1}} \le \big\|A^{-1}(t)u\big\|_{H^{1}}.
\end{equation}

To prove the inf-sup condition for $B(u, \boldsymbol{v})$, we can take
\begin{equation}\label{B1:1}
    \boldsymbol{v}_{0} = \big(u + c_{0}A^{-1}(t)\partial_{t}u, \varepsilon u(\cdot, t_{\ell})\big), \textnormal{ where } \varepsilon = (1 + 3c_{0})/2.
\end{equation}
By substituting $\boldsymbol{v}_{0}$ defined in Eq. \eqref{B1:1} into Eq. \eqref{p:1-16}, we find
\begin{equation}\label{ap:2-2}
    \begin{aligned}
        B(u, \boldsymbol{v}_{0}; s) &= \int_{t_{\ell}}^{s}\big(\partial_{t}u, u + c_{0}A^{-1}(t)\partial_{t}u\big) + a\big(u, u + c_{0}A^{-1}(t)\partial_{t}u; t\big)\text{d}t+ \varepsilon \|u(\cdot, t_{\ell})\|_{L^{2}}^{2}\\
        &= \int_{t_{\ell}}^{s}(\partial_{t}u, u) + c_{0}\big(\partial_{t}u, A^{-1}(t)\partial_{t}u\big)\\
        &\hspace{1cm}+ a(u, u; t) + c_{0}a\big(u, A^{-1}(t)\partial_{t}u; t\big)\text{d}t + \varepsilon \|u(\cdot, t_{\ell})\|_{L^{2}}^{2}.
    \end{aligned}
\end{equation}

From Eqs. \eqref{ap:2-18} and \eqref{ap:2-4}, we have
\begin{equation}\label{ap:2-33}
        \big(\partial_{t}u, A^{-1}(t)\partial_{t}u\big) = a\big(A^{-1}(t)\partial_{t}u, A^{-1}(t)\partial_{t}u; t\big)\ge c_{0}\big\|A^{-1}(t)\partial_{t}u\big\|_{H^{1}}^{2} \ge \frac{c_{0}}{C_{0}^{2}} \|\partial_{t}u\|_{H^{-1}}^{2}
\end{equation}
and $a\big(u, A^{-1}(t)\partial_{t}u; t\big) = (u, \partial_{t}u)$. 
%By substituting $A^{-1}(t)$ in Eq. \eqref{ap:2-2} by the inequalities presented in Eqs. \eqref{ap:2-33} and \eqref{ap:2-3}, 
Therefore, we conclude that
\begin{equation}\label{p:1-23}
    \begin{aligned}
        B(u, \boldsymbol{v}_{0}; s) &\ge \int_{t_{\ell}}^{s}\big((1 + c_{0})\big(\partial_{t}u, u\big) + \frac{c_{0}^{2}}{C_{0}^{2}}\|\partial_{t}u\|_{H^{-1}}^{2} + c_{0}\|u\|_{H^{1}}^{2}\big)\text{d}t + \varepsilon \|u(\cdot, t_{\ell})\|_{L^{2}}^{2}\\
        &= \int_{t_{\ell}}^{s}(\frac{c_{0}^{2}}{C_{0}^{2}}\|\partial_{t}u\|_{H^{-1}}^{2} + c_{0}\|u\|_{H^{1}}^{2})\text{d}t+ \frac{(1 + c_{0})}{2}\int_{t_{\ell}}^{s}\frac{\text{d}}{\text{d}t}\|u\|_{L^{2}}^{2}\text{d}t + \varepsilon \|u(\cdot, t_{\ell})\|_{L^{2}}^{2}\\
        &\geq \int_{t_{\ell}}^{s}(\frac{c_{0}^{2}}{C_{0}^{2}}\|\partial_{t}u\|_{H^{-1}}^{2} + c_{0}\|u\|_{H^{1}}^{2})\text{d}t + c_{0}\|u(\cdot, t_{\ell})\|_{L^{2}}^{2}\ge \ \alpha_{0}\|u\|_{X(t_{\ell}, s)}^{2},
    \end{aligned}
\end{equation}
where $\alpha_{0} \coloneqq \min\Big\{\frac{c_{0}^{2}}{C_{0}^{2}}, c_{0}\Big\}$.
%which leads to
%\begin{equation}
 %   B(u, \boldsymbol{v}_{0})  \text{ where } 
%\end{equation}
Additionally, since
\begin{equation}\label{B1:2}
    \begin{aligned}
        \|\boldsymbol{v}_{0}\|_{Y(t_{\ell}, s)}^{2} &= \int_{t_{\ell}}^{s}\|v\|_{H^{1}}^{2}\text{d}t + \|\Tilde{v}\|_{L^{2}}^{2}\\
        &= \int_{t_{\ell}}^{s}\|u + c_{0}A^{-1}(t)\partial_{t}u\|_{H^{1}}^{2}\text{d}t + \varepsilon^{2} \|u(\cdot, t_{\ell})\|_{L^{2}}^{2}\\
        & \le \int_{t_{\ell}}^{s}(2\|u\|_{H^{1}}^{2} + 2\|c_{0}A^{-1}(t)\partial_{t}u\|_{H^{1}}^{2}) \text{d}t + \varepsilon^{2} \|u(\cdot, t_{\ell})\|_{L^{2}}^{2}\\
        & \le \int_{t_{\ell}}^{s}(2\|u\|_{H^{1}}^{2} + 2\|\partial_{t}u\|_{H^{1}}^{2}) \text{d}t + \varepsilon^{2} \|u(\cdot, t_{\ell})\|_{L^{2}}^{2}\\
        &\le \max\{\sqrt{2}, \varepsilon\}^{2} \|u\|_{X(t_{\ell}, s)}^{2},
    \end{aligned}
\end{equation}
we conclude from Eq. \eqref{p:1-23} that
\begin{equation}\label{ap:2-5}
    b_{0} \le \inf_{u \neq 0} \sup_{\boldsymbol{v}_{0} \neq 0} \frac{B(u, \boldsymbol{v}_{0}; s)}{\|u\|_{X(t_{\ell}, s)} \|\boldsymbol{v}_{0}\|_{Y(t_{\ell}, s)}} \text{ where } b_{0} \coloneqq \frac{\alpha_{0}}{\max\{\sqrt{2}, \varepsilon\}}.
\end{equation}
\end{proof}

\subsection{A technical lemma}
In this subsection, we shall derive the upper error bound when using the mapped Jacobi approximation for approximating $\Tilde{u}$, which is the solution to the following equation:
\begin{equation}\label{B1:4}
    \begin{aligned}
        \big(\partial_{t}\Tilde{u}, v\big) + a\big(\Tilde{u}, v; t\big) = \big(f(\mathring{U}_{N, \gamma}^{\boldsymbol{\beta}, \boldsymbol{x}_{0}}; t), v\big),\,\,
        \big(\Tilde{u}(\cdot, t_{\ell}), \Tilde{v}\big) = \big(u(\cdot, t_{\ell}), \Tilde{v}\big),\,\,
        \forall (v, \Tilde{v}) \in Y(t_{\ell}, t_{\ell + 1}).
    \end{aligned}
\end{equation}
Here, $\mathring{U}_{N, \gamma}^{\boldsymbol{\beta}, \boldsymbol{x}_{0}}\in X_{N, \gamma}^{\boldsymbol{\beta}, \boldsymbol{x}_{0}}(t_{\ell}, t_{\ell + 1})$ is the mapped Jacobi approximation to $\Tilde{u}$ that solves the following equation:
\begin{equation}\label{B1:27}
    \begin{aligned}
        \big(\partial_{t}\mathring{U}_{N, \gamma}^{\boldsymbol{\beta}, \boldsymbol{x}_{0}}, v_{N}\big) + a\big(\mathring{U}_{N, \gamma}^{\boldsymbol{\beta}, \boldsymbol{x}_{0}}, v_{N}; t\big) &= \big(f(\mathring{U}_{N, \gamma}^{\boldsymbol{\beta}, \boldsymbol{x}_{0}}; t), v_{N}\big)\\
        \big(\mathring{U}_{N, \gamma}^{\boldsymbol{\beta}, \boldsymbol{x}_{0}}(\cdot, t_{\ell}), \Tilde{v}\big) = \big(u(\cdot, t_{\ell}),& \Tilde{v}\big),~\forall (v_{N}, \Tilde{v}_{N}) \in Y_{N, \gamma}^{\boldsymbol{\beta}, \boldsymbol{x}_{0}}(t_{\ell}, t_{\ell + 1}).
    \end{aligned}
\end{equation}
Specifically, the existence of $\mathring{U}_{N, \gamma}^{\boldsymbol{\beta}, \boldsymbol{x}_{0}}$ is given in \cite{Byszewski1991}.
%(need to prove existence first, can say the proof is similar to the proof in Appendix A)This result is in Chou \textit{et al.} \cite{Xia2023}.

\begin{lemma}\label{th:2-1}
Let $\Tilde{u}(\boldsymbol{x}, t)$ be the analytical solution of Eq.~\eqref{B1:4}, and let $\mathring{U}_{N, \gamma}^{\boldsymbol{\beta}, \boldsymbol{x}_{0}}\in X_{N, \gamma}^{\boldsymbol{\beta}, \boldsymbol{x}_{0}}(t_{\ell}, t_{\ell + 1})$ solve Eq.~\eqref{B1:27}.
Then, the error of the continuous-time mapped Jacobi approximation $\mathring{U}_{N, \gamma}^{\boldsymbol{\beta}, \boldsymbol{x}_{0}}$ satisfies
\begin{equation}
    \big\|\mathring{U}_{N, \gamma}^{\boldsymbol{\beta}, \boldsymbol{x}_{0}} - \Tilde{u}\big\|_{X(t_{\ell}, s)} \le A_{0} \big\|\Tilde{u} - \pi_{N, \gamma}^{\boldsymbol{\beta}, \boldsymbol{x}_{0}}\Tilde{u}\big\|_{X(t_{\ell}, s)},~\forall s \in [t_{\ell}, t_{\ell + 1}]
\end{equation}
where $A_{0} \coloneqq (B_{0} + b_{0})/b_{0}$, and the constants $B_{0}$ and $b_{0}$ are defined in Lemma~\ref{B1:3}.
\end{lemma}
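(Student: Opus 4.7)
The plan is to apply a Cea-type quasi-optimality argument built on the bilinear form $B(u,\boldsymbol{v};s)$ introduced in Eq.~\eqref{p:1-16}, for which continuity and inf-sup with constants $B_{0}$ and $b_{0}$ have already been established in Lemma~\ref{B1:3}. The target estimate $A_{0}=(B_{0}+b_{0})/b_{0}=1+B_{0}/b_{0}$ is exactly the form one expects from the standard Cea lemma.

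First I would reformulate both problems in terms of $B(\cdot,\cdot;s)$. Writing $L(\boldsymbol{v}):=\int_{t_{\ell}}^{s}\bigl(f(\mathring{U}_{N,\gamma}^{\boldsymbol{\beta},\boldsymbol{x}_{0}};t),v\bigr)\text{d}t+\bigl(u(\cdot,t_{\ell}),\tilde v\bigr)$, Eq.~\eqref{B1:4} becomes $B(\tilde u,\boldsymbol{v};s)=L(\boldsymbol{v})$ for every $\boldsymbol{v}\in Y(t_{\ell},s)$, and Eq.~\eqref{B1:27} becomes $B(\mathring{U}_{N,\gamma}^{\boldsymbol{\beta},\boldsymbol{x}_{0}},\boldsymbol{v}_{N};s)=L(\boldsymbol{v}_{N})$ for every $\boldsymbol{v}_{N}\in Y_{N,\gamma}^{\boldsymbol{\beta},\boldsymbol{x}_{0}}(t_{\ell},s)$. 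Crucially, the nonlinear term $f$ is evaluated at the \emph{same} argument $\mathring{U}_{N,\gamma}^{\boldsymbol{\beta},\boldsymbol{x}_{0}}$ in both equations, so subtracting yields the Galerkin orthogonality
\begin{equation*}
    B\bigl(\mathring{U}_{N,\gamma}^{\boldsymbol{\beta},\boldsymbol{x}_{0}}-\tilde u,\boldsymbol{v}_{N};s\bigr)=0,\quad \forall\,\boldsymbol{v}_{N}\in Y_{N,\gamma}^{\boldsymbol{\beta},\boldsymbol{x}_{0}}(t_{\ell},s).
\end{equation*}

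Next I would run the standard Cea argument. Pick any $u_{N}\in X_{N,\gamma}^{\boldsymbol{\beta},\boldsymbol{x}_{0}}(t_{\ell},s)$ and, using Galerkin orthogonality, rewrite $B(\mathring{U}_{N,\gamma}^{\boldsymbol{\beta},\boldsymbol{x}_{0}}-u_{N},\boldsymbol{v}_{N};s)=B(\tilde u-u_{N},\boldsymbol{v}_{N};s)$. Applying the inf-sup bound from Lemma~\ref{B1:3} to the left-hand side and the continuity bound to the right-hand side gives $\|\mathring{U}_{N,\gamma}^{\boldsymbol{\beta},\boldsymbol{x}_{0}}-u_{N}\|_{X(t_{\ell},s)}\le (B_{0}/b_{0})\|\tilde u-u_{N}\|_{X(t_{\ell},s)}$. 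A triangle inequality with the particular choice $u_{N}=\pi_{N,\gamma}^{\boldsymbol{\beta},\boldsymbol{x}_{0}}\tilde u\in X_{N,\gamma}^{\boldsymbol{\beta},\boldsymbol{x}_{0}}(t_{\ell},s)$ then yields
\begin{equation*}
    \bigl\|\mathring{U}_{N,\gamma}^{\boldsymbol{\beta},\boldsymbol{x}_{0}}-\tilde u\bigr\|_{X(t_{\ell},s)}\le \Bigl(1+\tfrac{B_{0}}{b_{0}}\Bigr)\bigl\|\tilde u-\pi_{N,\gamma}^{\boldsymbol{\beta},\boldsymbol{x}_{0}}\tilde u\bigr\|_{X(t_{\ell},s)}=A_{0}\bigl\|\tilde u-\pi_{N,\gamma}^{\boldsymbol{\beta},\boldsymbol{x}_{0}}\tilde u\bigr\|_{X(t_{\ell},s)},
\end{equation*}
which is exactly the claimed bound.

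The main obstacle is that Lemma~\ref{B1:3} is stated for the full spaces $X(t_{\ell},s)\times Y(t_{\ell},s)$, whereas the Cea argument above requires the inf-sup to persist on the subspace pair $X_{N,\gamma}^{\boldsymbol{\beta},\boldsymbol{x}_{0}}\times Y_{N,\gamma}^{\boldsymbol{\beta},\boldsymbol{x}_{0}}$ with the same (or at worst a uniformly equivalent) constant. The subtlety is that the explicit supremizer $\boldsymbol{v}_{0}=(u+c_{0}A^{-1}(t)\partial_{t}u,\,\varepsilon u(\cdot,t_{\ell}))$ from the proof of Lemma~\ref{B1:3} involves $A^{-1}(t)\partial_{t}u$, which is generally not in $V_{N,\gamma}^{\boldsymbol{\beta},\boldsymbol{x}_{0}}$. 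To handle this I would replace $A^{-1}(t)$ by its discrete Galerkin counterpart $A_{N}^{-1}(t):V_{N,\gamma}^{\boldsymbol{\beta},\boldsymbol{x}_{0}}\to V_{N,\gamma}^{\boldsymbol{\beta},\boldsymbol{x}_{0}}$ defined by $a(A_{N}^{-1}(t)w_{N},v_{N};t)=(w_{N},v_{N})$ for every $v_{N}\in V_{N,\gamma}^{\boldsymbol{\beta},\boldsymbol{x}_{0}}$; because $u_{N}\in X_{N,\gamma}^{\boldsymbol{\beta},\boldsymbol{x}_{0}}$ forces $\partial_{t}u_{N}\in V_{N,\gamma}^{\boldsymbol{\beta},\boldsymbol{x}_{0}}$, the test function $\boldsymbol{v}_{N}:=(u_{N}+c_{0}A_{N}^{-1}(t)\partial_{t}u_{N},\,\varepsilon u_{N}(\cdot,t_{\ell}))$ lies in $Y_{N,\gamma}^{\boldsymbol{\beta},\boldsymbol{x}_{0}}(t_{\ell},s)$. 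The identical chain of inequalities \eqref{ap:2-33}--\eqref{B1:2}, with $A^{-1}$ replaced by $A_{N}^{-1}$ and relying on the same coercivity/continuity of $a(\cdot,\cdot;t)$, delivers a discrete inf-sup with the same constant $b_{0}$, closing the argument.
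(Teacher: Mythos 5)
Your proposal follows essentially the same route as the paper's proof: subtract the two variational problems (both of which evaluate $f$ at $\mathring{U}_{N,\gamma}^{\boldsymbol{\beta},\boldsymbol{x}_{0}}$) to get Galerkin orthogonality $B(\mathring{U}_{N,\gamma}^{\boldsymbol{\beta},\boldsymbol{x}_{0}}-\Tilde{u},\boldsymbol{v}_{N};s)=0$, then run the standard Cea chain with the inf-sup and continuity constants $b_{0}, B_{0}$ of Lemma~\ref{B1:3} and close with a triangle inequality at $z=\pi_{N,\gamma}^{\boldsymbol{\beta},\boldsymbol{x}_{0}}\Tilde{u}$. The only difference is that the paper applies Lemma~\ref{B1:3} to the discrete pair $X_{N,\gamma}^{\boldsymbol{\beta},\boldsymbol{x}_{0}}\times Y_{N,\gamma}^{\boldsymbol{\beta},\boldsymbol{x}_{0}}$ without comment, whereas you explicitly flag that the supremizer must be replaced by a discrete one (via $A_{N}^{-1}$) --- a point the paper glosses over and which your sketch handles more carefully.
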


\begin{proof}
First, note that $\tilde{u}$ and $\mathring{U}_{N, \gamma}^{\boldsymbol{\beta}, \boldsymbol{x}_{0}}$ are the solutions to the following two equations, respectively:
\begin{equation}\label{B1:5}
    \begin{aligned}
         B(\Tilde{u}, \boldsymbol{v}; s) &= \int_{t_{\ell}}^{s}\big(f(\mathring{U}_{N, \gamma}^{\boldsymbol{\beta}, \boldsymbol{x}_{0}}; t), v\big) \textnormal{d}t + \big(u(\cdot, t_{\ell}), \Tilde{v}\big),\\
         \forall \boldsymbol{v} &= (v, \Tilde{v}) \in Y(t_{\ell}, s),~\forall s\in [t_{\ell}, t_{\ell + 1}],
    \end{aligned}
\end{equation}
and
\begin{equation}\label{B1:5_1}
    \begin{aligned}
        B(\mathring{U}_{N, \gamma}^{\boldsymbol{\beta}, \boldsymbol{x}_{0}}, \boldsymbol{v}_{N}; s) &= \int_{t_{\ell}}^{s}\big(f(\mathring{U}_{N, \gamma}^{\boldsymbol{\beta}, \boldsymbol{x}_{0}}, t), v_{N}\big) \textnormal{d}t + \big(u(\cdot, t_{\ell}), \Tilde{v}_{N}\big),\\
        \forall \boldsymbol{v}_{N} &= (v_{N}, \Tilde{v}_{N}) \in Y_{N, \gamma}^{\boldsymbol{\beta}, \boldsymbol{x}_{0}}(t_{\ell}, s),~\forall s\in [t_{\ell}, t_{\ell + 1}],
    \end{aligned}
\end{equation}
where $B(\cdot, \cdot; s)$ is defined in Eq.~\eqref{p:1-16}. 

By replacing $\boldsymbol{v}\in Y(t_{\ell}, s)$ in Eq.~\eqref{B1:5} with $\boldsymbol{v}_N$ and subtracting Eq.~\eqref{B1:5_1} from Eq. \eqref{B1:5}, we have
\begin{equation}
    B(\mathring{U}_{N, \gamma}^{\boldsymbol{\beta}, \boldsymbol{x}_{0}} - \Tilde{u}, \boldsymbol{v}_{N}; s) = 0,~ \forall \boldsymbol{v}_{N} \in Y_{N, \gamma}^{\boldsymbol{\beta}, \boldsymbol{x}_{0}}(t_{\ell}, s).
\end{equation}
Therefore, from Lemma \ref{B1:3}, we have, for any $z \in X_{N, \gamma}^{\boldsymbol{\beta}, \boldsymbol{x}_{0}}(t_{\ell}, s)$, 
\begin{equation}
\begin{aligned}
    \|\mathring{U}_{N, \gamma}^{\boldsymbol{\beta}, \boldsymbol{x}_{0}} - z\|_{X(t_{\ell}, s)} &\le \frac{1}{b_{0}}\sup_{0 \neq \boldsymbol{v}_{N} \in Y_{N, \gamma}^{\boldsymbol{\beta}, \boldsymbol{x}_{0}}(t_{\ell}, s)} \frac{B(\mathring{U}_{N, \gamma}^{\boldsymbol{\beta}, \boldsymbol{x}_{0}} - z, \boldsymbol{v}_{N})}{\|\boldsymbol{v}_{N}\|_{Y(t_{\ell}, s)}}\\
    & = \frac{1}{b_{0}}\sup_{0 \neq \boldsymbol{v}_{N} \in Y_{N, \gamma}^{\boldsymbol{\beta}, \boldsymbol{x}_{0}}(t_{\ell}, s)} \frac{B(\Tilde{u} - z, \boldsymbol{v}_{N})}{\|\boldsymbol{v}_{N}\|_{Y(t_{\ell}, s)}}\le \frac{B_{0}}{b_{0}}\|\Tilde{u} - z\|_{X(t_{\ell}, s)}.
\end{aligned}
\end{equation}
Finally, by the triangle inequality, we have
\begin{equation}
    \begin{aligned}
        \|\mathring{U}_{N, \gamma}^{\boldsymbol{\beta}, \boldsymbol{x}_{0}} - \Tilde{u}\|_{X(t_{\ell}, s)} &\le \inf_{z\in X_{N, \gamma}^{\boldsymbol{\beta}, \boldsymbol{x}_{0}}(t_{\ell}, s)} \Big(\|\mathring{U}_{N, \gamma}^{\boldsymbol{\beta}, \boldsymbol{x}_{0}} - z\|_{X(t_{\ell}, s)} + \|z - \Tilde{u}\|_{X(t_{\ell}, s)}\Big)\\
        &\le \frac{B_{0} + b_{0}}{b_{0}}\big\|\Tilde{u} - \pi_{N, \gamma}^{\boldsymbol{\beta}, \boldsymbol{x}_{0}}\Tilde{u}\big\|_{X(t_{\ell}, s)}.
    \end{aligned}
\end{equation}
\end{proof}

\subsection{Single-time-step error bound of the mapped Jacobi approximation}
In this subsection, we provide a single-time-step upper bound for the error of the mapped Jacobi method. We first prove the following lemma.
\begin{lemma}\label{B1:14}
Let $s\in[t_{\ell}, t_{\ell+1}]$. Suppose that $\mathring{U}_{N, \gamma}^{\boldsymbol{\beta}, \boldsymbol{x}_{0}}$ is the mapped Jacobi approximation defined in Eq.~\eqref{B1:27}. Let $u(\boldsymbol{x}, t)$ be the solution of the model problem Eq.~\eqref{eq:2-2}, which also solves the following equation:
\begin{equation}\label{B1:25}
    B(u, \boldsymbol{v}; s) = \int_{t_{\ell}}^{s}\big(f(u; t), v\big) \textnormal{d}t + \big(u(\cdot, t_{\ell}), \Tilde{v}\big),~ \forall \boldsymbol{v} \in Y(t_{\ell}, s),~\forall s\in [t_{\ell}, t_{\ell + 1}].
\end{equation}
There exists two constants $C_{\mathcal{M}}$ and $c_{\mathcal{J}}$ such that
\begin{equation}
    \big\|u(\cdot, t_{\ell + 1}) - \mathring{U}_{N, \gamma}^{\boldsymbol{\beta}, \boldsymbol{x}_{0}}(\cdot, t_{\ell + 1})\big\|_{L^{2}} \le C_{\mathcal{M}} \exp\big(c_{\mathcal{J}} (t_{\ell + 1} - t_{\ell})\big) \big\|u - \pi_{N, \gamma}^{\boldsymbol{\beta}, \boldsymbol{x}_{0}}u\big\|_{X(t_{\ell}, t_{\ell + 1})}.
\end{equation}
\end{lemma}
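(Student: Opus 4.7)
My plan is to introduce the auxiliary linear problem Eq.~\eqref{B1:4} and decompose the total error via its solution $\tilde{u}$ as
\begin{equation*}
u(\cdot,t_{\ell+1}) - \mathring{U}_{N,\gamma}^{\boldsymbol{\beta},\boldsymbol{x}_0}(\cdot,t_{\ell+1}) = \bigl(u(\cdot,t_{\ell+1}) - \tilde{u}(\cdot,t_{\ell+1})\bigr) + \bigl(\tilde{u}(\cdot,t_{\ell+1}) - \mathring{U}_{N,\gamma}^{\boldsymbol{\beta},\boldsymbol{x}_0}(\cdot,t_{\ell+1})\bigr).
\end{equation*}
The second piece is the Galerkin error for a linear parabolic problem whose source term $f(\mathring{U}_{N,\gamma}^{\boldsymbol{\beta},\boldsymbol{x}_0};t)$ is given, so Lemma~\ref{th:2-1} applies directly and yields $\|\tilde{u} - \mathring{U}_{N,\gamma}^{\boldsymbol{\beta},\boldsymbol{x}_0}\|_{X(t_\ell,t_{\ell+1})} \le A_0 \|\tilde{u} - \pi_{N,\gamma}^{\boldsymbol{\beta},\boldsymbol{x}_0}\tilde{u}\|_{X(t_\ell,t_{\ell+1})}$; evaluating at $t_{\ell+1}$ in $L^2$ is controlled by the continuous embedding $X(t_\ell,t_{\ell+1})\hookrightarrow C([t_\ell,t_{\ell+1}];L^2)$ from \citep[page 473]{Dautray1992}.

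For the first piece, set $w := u - \tilde{u}$. Subtracting Eq.~\eqref{B1:4} from the weak form of Eq.~\eqref{eq:2-2} shows that $w$ satisfies a linear parabolic problem with zero initial data and source $f(u;t) - f(\mathring{U}_{N,\gamma}^{\boldsymbol{\beta},\boldsymbol{x}_0};t)$. Testing with $v=w$, using the coercivity constant $c_0$ from Eq.~\eqref{eq:1-4}, the Lipschitz bound Eq.~\eqref{eq:1-5}, and the triangle inequality $\|u - \mathring{U}_{N,\gamma}^{\boldsymbol{\beta},\boldsymbol{x}_0}\|_{L^2} \le \|w\|_{L^2} + \|\tilde{u} - \mathring{U}_{N,\gamma}^{\boldsymbol{\beta},\boldsymbol{x}_0}\|_{L^2}$, followed by Young's inequality, produces a differential inequality of the form $\tfrac{d}{dt}\|w\|_{L^2}^2 + c_0\|w\|_{H^1}^2 \le C_1\|w\|_{L^2}^2 + C_2\|\tilde{u} - \mathring{U}_{N,\gamma}^{\boldsymbol{\beta},\boldsymbol{x}_0}\|_{L^2}^2$ with $C_1,C_2$ depending only on $c_0,C_0,L$. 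Gronwall's inequality then gives $\|w(\cdot,t_{\ell+1})\|_{L^2} \le C\exp(C_1\Delta t)\|\tilde{u} - \mathring{U}_{N,\gamma}^{\boldsymbol{\beta},\boldsymbol{x}_0}\|_{X(t_\ell,t_{\ell+1})}$.

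To finish, I will replace $\|\tilde{u}-\pi_{N,\gamma}^{\boldsymbol{\beta},\boldsymbol{x}_0}\tilde{u}\|_{X(t_\ell,t_{\ell+1})}$ by $\|u-\pi_{N,\gamma}^{\boldsymbol{\beta},\boldsymbol{x}_0}u\|_{X(t_\ell,t_{\ell+1})}$ via the splitting $\tilde{u}-\pi_{N,\gamma}^{\boldsymbol{\beta},\boldsymbol{x}_0}\tilde{u} = (u - \pi_{N,\gamma}^{\boldsymbol{\beta},\boldsymbol{x}_0}u) + (I-\pi_{N,\gamma}^{\boldsymbol{\beta},\boldsymbol{x}_0})(\tilde{u}-u)$; the last summand is controlled by $\|w\|_{X(t_\ell,t_{\ell+1})}$. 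Combining the two error contributions, absorbing the residual $\|w\|_X$ term into the left-hand side (or equivalently invoking Gronwall a second time on the coupled inequality), and collecting the resulting multiplicative factors as $C_{\mathcal{M}}$ and the accumulated Gronwall rates as $c_{\mathcal{J}}$ yields the claim. The main obstacle will be precisely this closure step: ensuring that the $L^2$-orthogonal projection $\pi_{N,\gamma}^{\boldsymbol{\beta},\boldsymbol{x}_0}$ appears only through its trivial $L^2$-stability (it is not in general $H^1$-stable), and verifying that all Gronwall constants depend solely on $c_0, C_0, L$ and thus are independent of $N,\boldsymbol{\beta},\boldsymbol{x}_0$, so that the bound is uniform as required for summation in Theorem~\ref{th:2-4}.
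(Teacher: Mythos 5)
Your decomposition through the auxiliary problem \eqref{B1:4}, the use of Lemma~\ref{th:2-1} for the Galerkin piece, the splitting $\tilde{u}-\pi_{N,\gamma}^{\boldsymbol{\beta},\boldsymbol{x}_0}\tilde{u} = (I-\pi_{N,\gamma}^{\boldsymbol{\beta},\boldsymbol{x}_0})u - (I-\pi_{N,\gamma}^{\boldsymbol{\beta},\boldsymbol{x}_0})(u-\tilde{u})$ to pass to $u-\pi_{N,\gamma}^{\boldsymbol{\beta},\boldsymbol{x}_0}u$, and the final Gronwall closure on the coupled inequality are exactly the paper's argument, with constants depending only on $c_0$, $C_0$, $L$ as you require. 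The only cosmetic difference is that the paper tests the equation for $u-\tilde{u}$ with $\boldsymbol{v}_1=\bigl(\tilde{u}-u+c_0A^{-1}(t)\partial_t(\tilde{u}-u),\,0\bigr)$ so as to control the full $X$-norm (including the $\|\partial_t(\tilde{u}-u)\|_{H^{-1}}$ component) in one stroke, whereas your plain energy test $v=w$ would need to recover that component from the equation afterward before the projection-splitting step; both routes then close identically.
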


\begin{proof}
 We decompose the following approximation error into two parts
\begin{equation}\label{ap:2-6}
    \|\mathring{U}_{N, \gamma}^{\boldsymbol{\beta}, \boldsymbol{x}_{0}} - u\|_{X(t_{\ell}, s)} \le \big\|\mathring{U}_{N, \gamma}^{\boldsymbol{\beta}, \boldsymbol{x}_{0}} - \Tilde{u}\big\|_{X(t_{\ell}, s)} + \big\|\Tilde{u} - u\big\|_{X(t_{\ell}, s)},
\end{equation}
where $\Tilde{u}$ is defined in Eq. \eqref{B1:4}. The upper bound of the first term on the RHS of Eq. \eqref{ap:2-6} is given in Lemma \ref{th:2-1}:
\begin{equation}\label{B1:13}
    \big\|\mathring{U}_{N, \gamma}^{\boldsymbol{\beta}, \boldsymbol{x}_{0}} - \Tilde{u}\big\|_{X(t_{\ell}, s)} \le A_{0} \big\|\Tilde{u} - \pi_{N, \gamma}^{\boldsymbol{\beta}, \boldsymbol{x}_{0}}\Tilde{u}\big\|_{X(t_{\ell}, s)}.
\end{equation}

Next, we give an upper bound of the second term on the RHS of Eq. \eqref{ap:2-6}, To this end, subtracting Eq.~\eqref{B1:25} from Eq. \eqref{B1:5_1}, we have
\begin{equation}
    B(\Tilde{u} - u, \boldsymbol{v}; s) = \int_{t_{\ell}}^{s}\big(f(\mathring{U}_{N, \gamma}^{\boldsymbol{\beta}, \boldsymbol{x}_{0}}; t) - f(u; t), v\big) \text{d}t.
\label{B1:201}
\end{equation}

By replacing $\boldsymbol{v}$ in Eq. \eqref{B1:201} with
\begin{equation}\label{B1:28}
    \boldsymbol{v}_{1} = (v_{1}, \Tilde{v}_{1}) \coloneqq \big(\Tilde{u} - u + c_{0}A^{-1}(t)\partial_{t}(\Tilde{u} - u), 0\big),
\end{equation}
we have
\begin{equation}\label{ap:2-10}
    \begin{aligned}
        B(\Tilde{u} - u, \boldsymbol{v}_{1}; s) &= \int_{t_{\ell}}^{s}\big(\partial_{t}(\Tilde{u} - u),  \Tilde{u} - u\big) + \big(\partial_{t}(\Tilde{u} - u), c_{0}A^{-1}(t)\partial_{t}(\Tilde{u} - u)\big)\\
        &\quad + a\big(\Tilde{u} - u, \Tilde{u} - u; t\big) + a\big(\Tilde{u} - u, c_{0}A^{-1}(t)\partial_{t}(\Tilde{u} - u); t\big)\text{d}t\\
        & \ge \int_{t_{\ell}}^{s}(1 + c_{0})\big(\partial_{t}(\Tilde{u} - u), \Tilde{u} - u\big) \text{d}t + \alpha_{0}\big\|\Tilde{u} - u\big\|_{X(t_{\ell}, s)}^{2},
    \end{aligned}
\end{equation}
where $\alpha_{0} \coloneqq \min\{c_{0}^{2}/C_{0}^{2}, c_{0}\}$. The proof of inequality~\eqref{ap:2-10} is similar to that of Eq. \eqref{ap:2-2}. Additionally, since
\begin{equation}\label{ap:2-11}
    \big(f(\mathring{U}_{N, \gamma}^{\boldsymbol{\beta}, \boldsymbol{x}_{0}}; t) - f(u; t), v_{1}\big) \le L\|\mathring{U}_{N, \gamma}^{\boldsymbol{\beta}, \boldsymbol{x}_{0}} - u\|_{L^{2}} \|v_{1}\|_{L^{2}},
\end{equation}
by combining Eqs. \eqref{ap:2-10} and \eqref{ap:2-11}, we have
\begin{equation}\label{B1:10}
\begin{aligned}
    \int_{t_{\ell}}^{s}(1 + c_{0})\big(\partial_{t}(\Tilde{u} - u),& \Tilde{u} - u\big) \text{d}t + \alpha_{0}\big\|\Tilde{u} - u\big\|_{X(t_{\ell}, s)}^{2}\\
    &\le B(\Tilde{u} - u, \boldsymbol{v}_{1}) =  \int_{t_{\ell}}^{s}\big(f(\mathring{U}_{N, \gamma}^{\boldsymbol{\beta}, \boldsymbol{x}_{0}}; t) - f(u; t), v_{1})\text{d}t\\
    & \le L\int_{t_{\ell}}^{s}\|\mathring{U}_{N, \gamma}^{\boldsymbol{\beta}, \boldsymbol{x}_{0}} - u\|_{L^{2}} \|v_{1}\|_{L^{2}}\text{d}t.
\end{aligned}
\end{equation}

Since $v_{1} = \Tilde{u} - u + c_{0}A^{-1}(t)\partial_{t}(\Tilde{u} - u)$ (defined in Eq. \eqref{B1:28}), we have, , for any $t\in[t_{\ell}, s]$:
\begin{equation}\label{B1:23}
    \|v_1(\cdot, t)\|_{L^2}^2 = \|\Tilde{u} - u + c_{0}A^{-1}(t)\partial_{t}(\Tilde{u} - u)\|_{L^{2}}^2\le 2 \|\partial_{t}(\Tilde{u} - u)(\cdot, t)\|_{H^{-1}}^{2} + 2\|(\Tilde{u} - u)(\cdot, t)\|_{H^{1}}^{2}.
\end{equation}
Integrating Eq.~\eqref{B1:23} over time from $t_{\ell}$ to $s$, we have
\begin{equation}\label{B1:9}
    \begin{aligned}
        \int_{t_{\ell}}^{s} \|v_{1}\|_{L^{2}}^{2}\text{d}t \le \int_{t_{\ell}}^{s} 2 \|\partial_{t}(\Tilde{u} - u)\|_{H^{-1}}^{2} + 2\|\Tilde{u} - u\|_{H^{1}}^{2}\text{d}t= 2\|\Tilde{u} - u\|_{X(t_{\ell}, s)}^{2}.
    \end{aligned}
\end{equation}

Combining Eq.~\eqref{B1:9} and Eq.~\eqref{B1:10}, we have
\begin{equation}\label{B1:11}
    \begin{aligned}
        L\int_{t_{\ell}}^{s}\|\mathring{U}_{N, \gamma}^{\boldsymbol{\beta}, \boldsymbol{x}_{0}} - u\|_{L^{2}} \|v_{1}\|_{L^{2}}\text{d}t
        &\le \frac{L^{2}}{\alpha_{0}}\int_{t_{\ell}}^{s} \|\mathring{U}_{N, \gamma}^{\boldsymbol{\beta}, \boldsymbol{x}_{0}} - u\|_{L^{2}}^{2}\text{d}t + \frac{\alpha_{0}}{4}\int_{t_{\ell}}^{s}\|v_{1}\|_{L^{2}}^{2}\text{d}t\\
        &\le \frac{L^{2}}{\alpha_{0}}\int_{t_{\ell}}^{s} \|\mathring{U}_{N, \gamma}^{\boldsymbol{\beta}, \boldsymbol{x}_{0}} - u\|_{L^{2}}^{2}\text{d}t + \frac{\alpha_{0}}{2} \|\Tilde{u} - u\|_{X(t_{\ell}, s)}^{2}.
    \end{aligned}
\end{equation}
Additionally, since
\begin{equation}\label{B1:12}
    \begin{aligned}
        (1 + c_{0})\int_{t_{\ell}}^{s}&\big(\partial_{t}(\Tilde{u} - u), \Tilde{u} - u\big)\text{d}t = \frac{1 + c_{0}}{2}\int_{t_{\ell}}^{s}\frac{\text{d}}{\text{d}t} \|\Tilde{u} - u\|_{L^{2}}^{2} \text{d}t\\
        & = \frac{1 + c_{0}}{2}\big(\|\Tilde{u}(\cdot, s) - u(\cdot, s)\|_{L^{2}}^{2} - \|\Tilde{u}(\cdot, t_{\ell}) - u(\cdot, t_{\ell})\|_{L^{2}}^{2}\big)\\
        & = \frac{1 + c_{0}}{2} \|\Tilde{u}(\cdot, s) - u(\cdot, s)\|_{L^{2}}^{2} \ge 0,
    \end{aligned}
\end{equation}
by combining Eqs. \eqref{B1:10}, \eqref{B1:11}, and \eqref{B1:12}, we can conclude that
\begin{equation}
    \alpha_{0}\|\Tilde{u} - u\|_{X(t_{\ell}, s)} \le \frac{L^{2}}{\alpha_{0}}\int_{t_{\ell}}^{s} \|\mathring{U}_{N, \gamma}^{\boldsymbol{\beta}, \boldsymbol{x}_{0}} - u\|_{L^{2}}^{2}\text{d}t + \frac{\alpha_{0}}{2} \|\Tilde{u} - u\|_{X(t_{\ell}, s)}^{2}.
\end{equation}
Therefore, we have 
\begin{equation}
\label{B1:30}
\|\Tilde{u} - u\|_{X(t_{\ell}, s)}^{2} \le \frac{2L^{2}}{\alpha_{0}^{2}} \int_{t_{\ell}}^{s} \|\mathring{U}_{N, \gamma}^{\boldsymbol{\beta}, \boldsymbol{x}_{0}} - u\|_{L^{2}}^{2}\text{d}t.
\end{equation}

Finally, using the upper bounds (Eqs. \eqref{B1:13} and \eqref{B1:30}) for the two terms on the RHS of Eq. \eqref{ap:2-6}, we have
\begin{equation}
\begin{aligned}
    \|u - \mathring{U}_{N, \gamma}^{\boldsymbol{\beta}, \boldsymbol{x}_{0}}\|_{X(t_{\ell}, s)}^{2} &\le \big(\|u - \Tilde{u}\|_{X(t_{\ell}, s)} + \|\Tilde{u} - \mathring{U}_{N, \gamma}^{\boldsymbol{\beta}, \boldsymbol{x}_{0}}\|_{X(t_{\ell}, s)}\big)^{2}\\
    &\le \big(\|u - \Tilde{u}\|_{X(t_{\ell}, s)} + A_{0}\|\Tilde{u} - \pi_{N, \gamma}^{\boldsymbol{\beta}, \boldsymbol{x}_{0}}\Tilde{u}\|_{X(t_{\ell}, s)}\big)^{2}\\
    &= \big(\|u - \Tilde{u}\|_{X(t_{\ell}, s)}+ A_{0}\big\|(I - \pi_{N, \gamma}^{\boldsymbol{\beta}, \boldsymbol{x}_{0}})\big(u - (u - \Tilde{u})\big)\big\|_{X(t_{\ell}, s)}\big)^{2}\\
    &\le \big((1 + A_{0})\|u - \Tilde{u}\|_{X(t_{\ell}, s)} + A_{0}\|u - \pi_{N, \gamma}^{\boldsymbol{\beta}, \boldsymbol{x}_{0}}u\|_{X(t_{\ell}, s)}\big)^{2}\\
    &\le 2(1 + A_{0})^{2}\|u - \Tilde{u}\|_{X(t_{\ell}, s)}^{2} + 2A_{0}^{2}\big\|u - \pi_{N, \gamma}^{\boldsymbol{\beta}, \boldsymbol{x}_{0}}u\big\|_{X(t_{\ell}, s)}^{2}\\
    &\hspace{-0.9cm}\le 4(1 + A_{0})^{2}\frac{L^{2}}{\alpha_{0}^{2}} \int_{t_{\ell}}^{s} \|u - \mathring{U}_{N, \gamma}^{\boldsymbol{\beta}, \boldsymbol{x}_{0}}\|_{L^{2}}^{2}\text{d}t+ 2A_{0}^{2}\big\|u - \pi_{N, \gamma}^{\boldsymbol{\beta}, \boldsymbol{x}_{0}}u\big\|_{X(t_{\ell}, s)}^{2},
\end{aligned}
\end{equation}
where $A_{0}$ is the constant defined in Lemma \ref{th:2-1}.

Futhermore, since 
\begin{equation}
    \begin{aligned}
        \|u - \mathring{U}_{N, \gamma}^{\boldsymbol{\beta}, \boldsymbol{x}_{0}}\|_{X(t_{\ell}, s)}^{2} &= \int_{t_{\ell}}^{s}\Big(\|\partial_{t}\big(u(\cdot, t) - \mathring{U}_{N, \gamma}^{\boldsymbol{\beta}, \boldsymbol{x}_{0}}(\cdot, t)\big)\|_{H^{-1}}^{2} \\
        &\quad+ \|u - \mathring{U}_{N, \gamma}^{\boldsymbol{\beta}, \boldsymbol{x}_{0}}(\cdot, t_{\ell})\|_{H^{1}}^{2}\Big)\text{d}t + \|u(\cdot, t_{\ell}) - \mathring{U}_{N, \gamma}^{\boldsymbol{\beta}, \boldsymbol{x}_{0}}(\cdot, t_{\ell})\|_{L^{2}}^{2}\\
        &\ge \int_{t_{\ell}}^{s}2 \big(\partial_{t}(u(\cdot, t) - \mathring{U}_{N, \gamma}^{\boldsymbol{\beta}, \boldsymbol{x}_{0}}(\cdot, t)), u(\cdot, t) - \mathring{U}_{N, \gamma}^{\boldsymbol{\beta}, \boldsymbol{x}_{0}}(\cdot, t)\big)\text{d}t \\
        &\quad+ \|u(\cdot, t_{\ell}) - \mathring{U}_{N, \gamma}^{\boldsymbol{\beta}, \boldsymbol{x}_{0}}(\cdot, t_{\ell})\|_{L^{2}}^{2} \\
        &= \|u(\cdot, s) - \mathring{U}_{N, \gamma}^{\boldsymbol{\beta}, \boldsymbol{x}_{0}}(\cdot, s)\|_{L^{2}}^{2},
    \end{aligned}
\end{equation}
we have
\begin{equation}
\begin{aligned}
    \|u(\cdot, s) - \mathring{U}_{N, \gamma}^{\boldsymbol{\beta}, \boldsymbol{x}_{0}}(\cdot, s)\|_{L^{2}}^{2} 
    \le 4(1 + A_{0})^{2}\frac{L^{2}}{\alpha_{0}^{2}} \int_{t_{\ell}}^{s} \|u - \mathring{U}_{N, \gamma}^{\boldsymbol{\beta}, \boldsymbol{x}_{0}}\|_{L^{2}}^{2}\text{d}t + 2A_{0}^{2}\big\|u - \pi_{N, \gamma}^{\boldsymbol{\beta}, \boldsymbol{x}_{0}}u\big\|_{X(t_{\ell}, s)}^{2}.
\end{aligned}
\label{t_ell_ineq}
\end{equation}
Actually, the inequality~\eqref{t_ell_ineq} holds for any $t_{\ell} \le s \le t_{\ell + 1}$, and $\big\|u - \pi_{N, \gamma}^{\boldsymbol{\beta}, \boldsymbol{x}_{0}}u\big\|_{X(t_{\ell}, s)}^{2}$ is a non-decreasing function of $s$. By using the Gronwall inequality, we can show that
\begin{equation}\label{B1:31}
    \|u(\cdot, s) - \mathring{U}_{N, \gamma}^{\boldsymbol{\beta}, \boldsymbol{x}_{0}}(\cdot, s)\|_{L^{2}}^{2} \le C_{\mathcal{M}}^{2} \exp(2c_{\mathcal{J}} (s - t_{\ell})) \|u - \pi_{N, \gamma}^{\boldsymbol{\beta}, \boldsymbol{x}_{0}}u\|_{X(t_{\ell}, s)}^{2}.
\end{equation}
Finally, by taking $s = t_{\ell + 1}$ in Eq.~\eqref{B1:31}, we have
\begin{equation}
    \|u(\cdot, t_{\ell + 1}) - \mathring{U}_{N, \gamma}^{\boldsymbol{\beta}, \boldsymbol{x}_{0}}(\cdot, t_{\ell + 1})\|_{L^{2}} \le C_{\mathcal{M}} \exp(c_{\mathcal{J}} \Delta t) \|u - \pi_{N, \gamma}^{\boldsymbol{\beta}, \boldsymbol{x}_{0}}u\|_{X(t_{\ell}, t_{\ell + 1})},
\end{equation}
where $\Delta t \coloneqq t_{\ell + 1} - t_{\ell}$, and the constants are given by
\begin{equation}
    C_{\mathcal{M}} \coloneqq \sqrt{2}A_{0}\quad c_{\mathcal{J}} = (1 + A_{0})^{2} \frac{2L^{2}}{\alpha_{0}^{2}}.
\end{equation}
and
\begin{equation}
    \begin{aligned}
        A_{0} \coloneqq \frac{B_{0} + b_{0}}{b_{0}},~ \alpha_{0} \coloneqq \min\{c_{0}^{2}/C_{0}^{2}, c_{0}\},\,\,
        B_{0} \coloneqq \sqrt{2 + 2C_{0}^{2}},~b_{0} \coloneqq \frac{\alpha_{0}}{\max\{\sqrt{2}, \varepsilon\}}.
    \end{aligned}
\end{equation}
where $\varepsilon = (1 + 3c_{0})/2$.
\end{proof}

In the following, we conclude the mapped Jacobi error analysis by estimating the error of the mapped Jacobi approximation $t=t_{\ell+1}$ when the initial condition at $t=t_{\ell}$ is not $u(\cdot, t_{\ell})$. Suppose that $\tilde{U}_{N, \gamma}^{\boldsymbol{\beta}, \boldsymbol{x}_{0}}\in X_{N, \gamma}^{\boldsymbol{\beta}, \boldsymbol{x}_{0}}(t_{\ell}, t_{\ell + 1})$ solves Eq. \eqref{eq:4-23}, which also satisfies the following equation:
\begin{equation}\label{B1:17}
    \begin{aligned}
        B(\tilde{U}_{N, \gamma}^{\boldsymbol{\beta}, \boldsymbol{x}_{0}}, \boldsymbol{v}_{N}; s) &= \int_{t_{\ell}}^{s}\big(f(\Tilde{U}_{N, \gamma}^{\boldsymbol{\beta}, \boldsymbol{x}_{0}}; t), v_{N}\big) \textnormal{d}t + \big(U(\cdot, t_{\ell}), \Tilde{v}_{N}\big),\\
        \forall \boldsymbol{v}_{N} &= (v_{N}, \tilde{v}_{N}) \in Y_{N, \gamma}^{\boldsymbol{\beta}, \boldsymbol{x}_{0}}(t_{\ell}, s),~\forall s\in [t_{\ell}, t_{\ell + 1}].
    \end{aligned}
\end{equation}
We decompose the error into two parts:
\begin{equation}\label{B1:15}
    \begin{aligned}
        \|u(\cdot, t_{\ell + 1}) - \tilde{U}_{N, \gamma}^{\boldsymbol{\beta}, \boldsymbol{x}_{0}}(\cdot, t_{\ell + 1})\|_{L^{2}} &\le \|u(\cdot, t_{\ell + 1}) - \mathring{U}_{N, \gamma}^{\boldsymbol{\beta}, \boldsymbol{x}_{0}}(\cdot, t_{\ell + 1})\|_{L^{2}}+ \|\mathring{U}_{N, \gamma}^{\boldsymbol{\beta}, \boldsymbol{x}_{0}}(\cdot, t_{\ell + 1}) - \tilde{U}_{N, \gamma}^{\boldsymbol{\beta}, \boldsymbol{x}_{0}}(\cdot, t_{\ell + 1})\|_{L^{2}}.
    \end{aligned}
\end{equation}
The upper bound of the first term on the RHS of Eq. \eqref{B1:15} is in Theorem \ref{B1:14}:
\begin{equation}
    \|u(\cdot, t_{\ell + 1}) - \mathring{U}_{N, \gamma}^{\boldsymbol{\beta}, \boldsymbol{x}_{0}}(\cdot, t_{\ell + 1})\|_{L^{2}} \le C_{\mathcal{M}} \exp(c_{\mathcal{J}} \Delta t) \|u - \pi_{N, \gamma}^{\boldsymbol{\beta}, \boldsymbol{x}_{0}}u\|_{X(t_{\ell}, t_{\ell + 1})}.
\end{equation}
Next, we derive an upper bound for the second term $\|\mathring{U}_{N, \gamma}^{\boldsymbol{\beta}, \boldsymbol{x}_{0}}(\cdot, t_{\ell + 1}) - \tilde{U}_{N, \gamma}^{\boldsymbol{\beta}, \boldsymbol{x}_{0}}(\cdot, t_{\ell + 1})\|_{L^{2}}$ in Eq. \eqref{B1:15}. By subtracting Eq.~\eqref{B1:17} from Eq. \eqref{B1:5}, we have
\begin{equation}\label{B1:18}
    \begin{aligned}
        B(\mathring{U}_{N, \gamma}^{\boldsymbol{\beta}, \boldsymbol{x}_{0}} - \tilde{U}_{N, \gamma}^{\boldsymbol{\beta}, \boldsymbol{x}_{0}}, \boldsymbol{v}_{N}; s) &= \int_{t_{\ell}}^{s}\big(f(\mathring{U}_{N, \gamma}^{\boldsymbol{\beta}, \boldsymbol{x}_{0}}; t) - f(\tilde{U}_{N, \gamma}^{\boldsymbol{\beta}, \boldsymbol{x}_{0}}; t), v_{N}\big)\text{d}t + \big(u(\cdot, t_{\ell}) - U(\cdot, t_{\ell}), \tilde{v}_{N}\big).
    \end{aligned}
\end{equation}
We can set
\begin{equation}\label{B1:19}
    \boldsymbol{v}_{2} = (v_{2}, \Tilde{v}_{2}) \coloneqq \big(\mathring{U}_{N, \gamma}^{\boldsymbol{\beta}, \boldsymbol{x}_{0}} - \tilde{U}_{N, \gamma}^{\boldsymbol{\beta}, \boldsymbol{x}_{0}}, \mathring{U}_{N, \gamma}^{\boldsymbol{\beta}, \boldsymbol{x}_{0}}(\cdot, t_{\ell}) - \tilde{U}_{N, \gamma}^{\boldsymbol{\beta}, \boldsymbol{x}_{0}}(\cdot, t_{\ell})\big),
\end{equation}
and by replacing $\boldsymbol{v}$ in Eq. \eqref{B1:18} with $\boldsymbol{v_{2}}$ defined in \eqref{B1:19}, we have
\begin{equation}\label{B1:20}
    \begin{aligned}
        B(\mathring{U}_{N, \gamma}^{\boldsymbol{\beta}, \boldsymbol{x}_{0}} - \tilde{U}_{N, \gamma}^{\boldsymbol{\beta}, \boldsymbol{x}_{0}}, \boldsymbol{v}_{2}; s) &= \int_{t_{\ell}}^{s} \big(\partial_{t}v_{2}, v_{2}\big) + a\big(v_{2}, v_{2}\big) \text{d}t + \|v_{2}(\cdot, t_{\ell})\|_{L^{2}}\\
        &\ge \int_{t_{\ell}}^{s} \frac{1}{2}\frac{\text{d}}{\text{d}t}\|v_{2}\|_{L^{2}}^{2} + c_{0}\|v_{2}\|_{H^{1}} \text{d}t + \|v_{2}(\cdot, t_{\ell})\|_{L^{2}}\\
        &\ge \int_{t_{\ell}}^{s} \frac{1}{2}\frac{\text{d}}{\text{d}t}\|v_{2}\|_{L^{2}}^{2} + c_{0}\|v_{2}\|_{L^{2}} \text{d}t + \|v_{2}(\cdot, t_{\ell})\|_{L^{2}}.
    \end{aligned}
\end{equation}
Additionally, since
\begin{equation}\label{B1:21}
    \begin{aligned}
        \int_{t_{\ell}}^{s}\big(f(\mathring{U}_{N, \gamma}^{\boldsymbol{\beta}, \boldsymbol{x}_{0}}; t) &- f(\tilde{U}_{N, \gamma}^{\boldsymbol{\beta}, \boldsymbol{x}_{0}}; t), v_{2}\big)\text{d}t + \big(u(\cdot, t_{\ell}) - U(\cdot, t_{\ell}), \tilde{v}_{2}\big)\\
        &\le \int_{t_{\ell}}^{s}L\|v_{2}\|_{L^{2}}^{2}\text{d}t + \|u(\cdot, t_{\ell}) - U(\cdot, t_{\ell})\|_{L^{2}}\|v_{2}(\cdot, t_{\ell})\|_{L^{2}} \\
        &\le \int_{t_{\ell}}^{s}L\|v_{2}\|_{L^{2}}^{2}\text{d}t + \frac{\|u(\cdot, t_{\ell}) - U(\cdot, t_{\ell})\|_{L^{2}}^{2} + \|v_{2}(\cdot, t_{\ell})\|_{L^{2}}^{2}}{2},
    \end{aligned}
\end{equation}
by combining Eqs. \eqref{B1:20} and \eqref{B1:21}, we have
\begin{equation}
    \begin{aligned}
        \int_{t_{\ell}}^{s} \frac{1}{2}\frac{\text{d}}{\text{d}t} & \|v_{2}\|_{L^{2}}^{2}+ c_{0}\|v_{2}\|_{L^{2}}^{2} \text{d}t + \|v_{2}(\cdot, t_{\ell})\|_{L^{2}}^{2} \le B(\mathring{U}_{N, \gamma}^{\boldsymbol{\beta}, \boldsymbol{x}_{0}} - \tilde{U}_{N, \gamma}^{\boldsymbol{\beta}, \boldsymbol{x}_{0}}, \boldsymbol{v}_{2})\\
%        &= \int_{t_{\ell}}^{s}\big(f(\mathring{U}_{N, \gamma}^{\boldsymbol{\beta}, \boldsymbol{x}_{0}}; t) - f(\tilde{U}_{N, \gamma}^{\boldsymbol{\beta}, \boldsymbol{x}_{0}}; t), v_{2}\big)\text{d}t + \big(u(\cdot, t_{\ell}) - U(\cdot, t_{\ell}), \tilde{v}_{2}\big)\\
        &\le \int_{t_{\ell}}^{s}L\|v_{2}\|_{L^{2}}^{2}\text{d}t + \frac{\|u(\cdot, t_{\ell}) - U(\cdot, t_{\ell})\|_{L^{2}}^{2} + \|v_{2}(\cdot, t_{\ell})\|_{L^{2}}^{2}}{2}.
    \end{aligned}
\end{equation}
Therefore, we have
\begin{equation}
    \|v_{2}(\cdot, s)\|_{L^{2}}^{2} + 2\int_{t_{\ell}}^{s} (c_{0} - L)\|v_{2}(\cdot, t)\|_{L^{2}}^{2} \text{d}t \le \|u(\cdot, t_{\ell}) - U(\cdot, t_{\ell})\|_{L^{2}}^{2}.
\label{v2_ineq}
\end{equation}
The inequality~\eqref{v2_ineq} actually holds for any $t_{\ell + 1} \ge s \ge t_{\ell}$, \textit{i.e.},
\begin{equation}
    \|v_{2}(\cdot, s)\|_{L^{2}}^{2} + 2\int_{t_{\ell}}^{s} (c_{0} - L)\|v_{2}(\cdot, t)\|_{L^{2}}^{2} \text{d}t \le \|u(\cdot, t_{\ell}) - U(\cdot, t_{\ell})\|_{L^{2}}^{2}.
\end{equation}
By using the Gronwall inequality, we can show that
\begin{equation}\label{B1:32}
    \|v_{2}(\cdot, s)\|_{L^{2}}^{2} \le \exp\big(2(L - c_{0}) (s - t_{\ell})\big) \|u(\cdot, t_{\ell}) - U(\cdot, t_{\ell})\|_{L^{2}}^{2}
\end{equation}
Finally, by taking $s = t_{\ell + 1}$ in Eq.~\eqref{B1:32}, we conclude that:
\begin{equation}\label{B1:22}
    \begin{aligned}
        \|\mathring{U}_{N, \gamma}^{\boldsymbol{\beta}, \boldsymbol{x}_{0}}(\cdot, t_{\ell + 1}) - \tilde{U}_{N, \gamma}^{\boldsymbol{\beta}, \boldsymbol{x}_{0}}(\cdot, t_{\ell + 1})\|_{L^{2}}^{2} &= \|v_{2}(\cdot, t_{\ell + 1})\|_{L^{2}}^{2}\\
        &\le \exp\big(2(L - c_{0}) \Delta t\big) \|u(\cdot, t_{\ell}) - U(\cdot, t_{\ell})\|_{L^{2}}^{2}.
    \end{aligned}
\end{equation}
Plugging this upper bound for $\|\mathring{U}_{N, \gamma}^{\boldsymbol{\beta}, \boldsymbol{x}_{0}}(\cdot, t_{\ell + 1}) - \tilde{U}_{N, \gamma}^{\boldsymbol{\beta}, \boldsymbol{x}_{0}}(\cdot, t_{\ell + 1})\|_{L^{2}}$ into Eq. \eqref{B1:15}, we have
\begin{equation}
    \begin{aligned}
        \|u(\cdot, t_{\ell + 1}) - \tilde{U}_{N, \gamma}^{\boldsymbol{\beta}, \boldsymbol{x}_{0}}(\cdot, t_{\ell + 1})\|_{L^{2}} &\le \exp\big((L - c_{0}) \Delta t\big) \|u(\cdot, t_{\ell}) - U(\cdot, t_{\ell})\|_{L^{2}}\\
        &\quad + C_{\mathcal{M}} \exp(c_{\mathcal{J}} \Delta t) \big\|u - \pi_{N, \gamma}^{\boldsymbol{\beta}, \boldsymbol{x}_{0}}u\big\|_{X(t_{\ell}, t_{\ell + 1})}.
    \end{aligned}
\end{equation}

\section{Proof of Theorem \ref{th:6-2}}\label{ap:4}
Here, we prove the error bound Eq. \eqref{RK_error_bound} for the IRK scheme Eq. \eqref{eq:2-21}. The result is based on the approach in Lubich et al. \cite{Ostermann1995}. 

We denote the solution of the IRK scheme at $t=t_{\ell+1}$ given the initial condition $U_{N}(\boldsymbol{x}, t_{\ell})$ at $t=t_{\ell}$ as $U_{N, \gamma}^{\boldsymbol{\beta}, \boldsymbol{x}_{0}}(\boldsymbol{x}, t_{\ell + 1})$. The stage variable at $t=t_{0s}$ will be denoted as $U_{N}^{0s}$. Additionally, the continuous-time mapped Jacobi approximation will be denoted as $\Tilde{U}_{N, \gamma}^{\boldsymbol{\beta}, \boldsymbol{x}_{0}}(\boldsymbol{x}, t)$ and the continuous-time mapped Jacobi approximations on the stage time node $t_{0s}$ will be as $\tilde{U}_{N}^{0s} \coloneqq \Tilde{U}_{N, \gamma}^{\boldsymbol{\beta}, \boldsymbol{x}_{0}}(\boldsymbol{x}, t_{0s})$. We shall derive the IRK time discretization error bound within the interval $[t_{\ell}, t_{\ell + 1}]$. The IRK scheme is formulated as
\begin{equation}\label{B2:1}
    \begin{aligned}
        &U_{N}^{0r} = U_{N}(\boldsymbol{x}, t_{\ell}) + \Delta t \sum_{s = 1}^{q}a_{RK}^{rs}\big(F(U^{0s}_{N}; t_{0s}) - A(U^{0s}_{N}; t_{0s})\big),\\
        &U_{N, \gamma}^{\boldsymbol{\beta}, \boldsymbol{x}_{0}}(\boldsymbol{x}, t_{\ell + 1}) = U_{N}(\boldsymbol{x}, t_{\ell}) + \Delta t \sum_{s = 1}^{q}b_{RK}^{s}\big(F(U^{0s}_{N}; t_{0s}) - A(U^{0s}_{N}; t_{0s})\big).
    \end{aligned}
\end{equation}
Here, $A(\cdot; t)$ and $F(\cdot, t)$ are defined as
\begin{equation}
    \begin{aligned}
        A(U_{N, \gamma}^{\boldsymbol{\beta}, \boldsymbol{x}_{0}}; t) &\coloneqq \sum_{\boldsymbol{m} \in \Upsilon_{N, \gamma}} \sum_{\boldsymbol{n} \in \Upsilon_{N, \gamma}} a(\mathcal{J}_{\boldsymbol{n}}^{\boldsymbol{\beta}, \boldsymbol{x}_{0}}, \mathcal{J}_{\boldsymbol{m}}^{\boldsymbol{\beta}, \boldsymbol{x}_{0}}; t) u_{\boldsymbol{n}}^{\boldsymbol{\beta}, \boldsymbol{x}_{0}} \mathcal{J}_{\boldsymbol{m}}^{\boldsymbol{\beta}, \boldsymbol{x}_{0}},\\
        F(U_{N, \gamma}^{\boldsymbol{\beta}, \boldsymbol{x}_{0}}; t) &\coloneqq \pi_{N, \gamma}^{\boldsymbol{\beta}, \boldsymbol{x}_{0}} f(U_{N, \gamma}^{\boldsymbol{\beta}, \boldsymbol{x}_{0}}; t),
    \end{aligned}
\end{equation}
where $\{u_{\boldsymbol{n}}^{\boldsymbol{\beta}, \boldsymbol{x}_{0}}\}_{\boldsymbol{n}\in\Upsilon_{N, \gamma}}$ are the spectral expansion coefficients of the solution $U_{N, \gamma}^{\boldsymbol{\beta}, \boldsymbol{x}_{0}}$ such that $U_{N, \gamma}^{\boldsymbol{\beta}, \boldsymbol{x}_{0}}(\boldsymbol{x}) = \sum_{\boldsymbol{n} \in \Upsilon_{N, \gamma}} u_{\boldsymbol{n}}^{\boldsymbol{\beta}, \boldsymbol{x}_{0}} \mathcal{J}_{\boldsymbol{n}}^{\boldsymbol{\beta}, \boldsymbol{x}_{0}}(\boldsymbol{x})$. Additionally, it can be verified that $\forall v_{N} \in V_{N, \gamma}^{\boldsymbol{\beta}, \boldsymbol{x}_{0}}$,
\begin{equation}\label{B1:26}
        a(U_{N, \gamma}^{\boldsymbol{\beta}, \boldsymbol{x}_{0}}, v_{N}; t) = \big(A(U_{N, \gamma}; t)^{\boldsymbol{\beta}, \boldsymbol{x}_{0}}, v_{N}\big),\,\,
        (F(U_{N, \gamma}^{\boldsymbol{\beta}, \boldsymbol{x}_{0}}; t), v_{N}) = \big(f(U_{N, \gamma}^{\boldsymbol{\beta}, \boldsymbol{x}_{0}}; t), v_{N}\big).
\end{equation}

Plugging the continuous-time mapped Jacobi approximation $\Tilde{U}_{N, \gamma}^{\boldsymbol{\beta}, \boldsymbol{x}_{0}}(t)$ into the IRK scheme Eq. \eqref{B2:1}, we have
\begin{equation}\label{p:3-1}
    \begin{aligned}
        \Tilde{U}_{N, \gamma}^{\boldsymbol{\beta}, \boldsymbol{x}_{0}}(\boldsymbol{x}, t_{0r}) &= U_{N}(\boldsymbol{x}, t_{\ell}) + \Delta t \sum_{s = 1}^{q}a_{RK}^{rs} \big(F(\Tilde{U}^{0s}_{N}; t_{0s}) - A(\Tilde{U}^{0s}_{N};t_{0s})\big) + Q_{0r},\\~
        \Tilde{U}_{N, \gamma}^{\boldsymbol{\beta}, \boldsymbol{x}_{0}}(\boldsymbol{x}, t_{\ell + 1}) &= U_{N}(\boldsymbol{x}, t_{\ell}) + \Delta t \sum_{s = 1}^{q}b_{RK}^{s} \big(F(\Tilde{U}^{0s}_{N}; t_{0s}) - A(\Tilde{U}^{0s}_{N};t_{0s})\big) + Q_{1},
    \end{aligned}
\end{equation}
where $Q_{0r}$ and $Q_{1}$ denote the defects of the equation. $Q_{0r}$ and $Q_{1}$ can be represented with the Peano kernels $\kappa_{0s}$ and $\kappa_{1}$ \citep[Theorem 1.1]{Ostermann1995},
\begin{equation}\label{p:2-13}
    Q_{0 s} = \Delta t^{q} \int_{t_{\ell}}^{t_{\ell + 1}} \kappa_{0s}\Big(\frac{t}{\Delta t}\Big) \partial_{t}^{(q + 1)}\Tilde{U}_{N, \gamma}^{\boldsymbol{\beta}, \boldsymbol{x}_{0}}(t)\text{d}t,
\end{equation}
\begin{equation}\label{p:2-14}
    \begin{aligned}
        Q_{1} &= \Delta t^{q + 1} \int_{t_{\ell}}^{t_{\ell + 1}}\kappa_{1}\Big(\frac{t}{\Delta t}\Big)\partial_{t}^{(q + 2)} \Tilde{U}_{N, \gamma}^{\boldsymbol{\beta}, \boldsymbol{x}_{0}}(t)\text{d}t\\
        &= -\Delta t^{q} \int_{t_{\ell}}^{t_{\ell + 1}} \kappa^{\prime}_{1}\Big(\frac{t}{\Delta t}\Big) \partial_{t}^{(q + 1)} \Tilde{U}_{N, \gamma}^{\boldsymbol{\beta}, \boldsymbol{x}_{0}}(t)\text{d}t,
    \end{aligned}
\end{equation}
where $q$ is the stage order of the IRK scheme. The Peano kernels $\kappa_{0s}$, $\kappa_{1}$ are uniformly bounded \cite{Ostermann1995}, and we shall denote their upper bound as $C_{p}/2$, \textit{i.e.},
\begin{equation}
    \kappa_{0s}(t) \le C_{p}/2,~\kappa_{1}(t) \le C_{p}/2.
\end{equation}
%Additionally, the errors $Q_{0s}$ and $Q_{1}$ in Eq. \eqref{p:2-14} can be estimated by using the Cauchy inequality \citep[Eq. (1.6)]{Ostermann1995},
Additionally, we can use the Cauchy inequality \citep[Eq. (1.6)]{Ostermann1995} to bound the defects $Q_{0s}$ and $Q_{1}$ in Eqs.~\eqref{p:2-13} and \eqref{p:2-14}:
\begin{equation}\label{p:1-15}
    \begin{aligned}
        \Delta t\sum_{r = 1}^{q}\|Q_{0 r}\|_{H^{1}}^{2} &+ \Delta t\|Q_{1}\|_{H^{1}}^{2} + \Delta t \|Q_{1}/\Delta t\|_{H^{-1}}^{2}\\
        &\le C_{p}^{2}(\Delta t^{q + 1})^{2} \int_{t_{\ell}}^{t_{\ell + 1}} \|\partial_{t}^{(q + 2)} \Tilde{U}_{N, \gamma}^{\boldsymbol{\beta}, \boldsymbol{x}_{0}}\|_{H^{-1}} + \|\partial_{t}^{(q + 1)} \Tilde{U}_{N, \gamma}^{\boldsymbol{\beta}, \boldsymbol{x}_{0}}\|_{H^{1}} \text{d}t\\
        &\le C_{p}^{2}(\Delta t^{q + 1})^{2} \|\partial_{t}^{(q + 1)}\Tilde{U}_{N, \gamma}^{\boldsymbol{\beta}, \boldsymbol{x}_{0}}\|_{X(t_{\ell}, t_{\ell + 1})}^{2}.
    \end{aligned}
\end{equation}

In the following, we leverage Eq. \eqref{p:1-15} to derive the IRK scheme error bound. The IRK scheme error $U_{N, \gamma}^{\boldsymbol{\beta}, \boldsymbol{x}_{0}}(\cdot, t_{\ell + 1}) - \Tilde{U}_{N, \gamma}^{\boldsymbol{\beta}, \boldsymbol{x}_{0}}(\cdot, t_{\ell + 1})$ is denoted by $E_{1}$, the stage variable errors $U_{N}^{0s} - \Tilde{U}_{N}^{0s}$ are denoted by $E_{0s}$. Additionally, we define $E_{0s}^{\prime}$ as:
\begin{equation}\label{p:1-4}
    E_{0s}^{\prime} \coloneqq F(U_{0r}; t_{0s}) - F(\Tilde{U}_{0s}; t_{0s}) - A(E_{0s};t_{0s}).
\end{equation}
By subtracting Eq. \eqref{p:3-1} from Eq. \eqref{B2:1}, we have
\begin{equation}\label{p:1-5}
    E_{0 r} = \Delta t \sum_{s = 1}^{q}a_{RK}^{rs} E_{0s}^{\prime} - Q_{0 r},
\end{equation}
and
\begin{equation}\label{p:1-6}
    E_{1} = \Delta t \sum_{r = 1}^{q}b_{RK}^{r} E_{0r}^{\prime} - Q_{1}.
\end{equation}

We first prove the following lemma to estimate the error of the IRK scheme.
\begin{lemma}\label{th:7-1} The following inequality holds for the IRK scheme,
\begin{equation}\label{p:3-18}
    \begin{aligned}
        \|E_{1}\|^{2}_{L^{2}} &\le - c_{0}\Delta t \sum_{r = 1}^{q}b_{RK}^{r} \|E_{0r}\|^{2}_{H^{1}} + 4L \Delta t \sum_{r = 1}^{q} b_{RK}^{r}\|E_{0r}\|^{2}_{L^{2}}\\
        &\quad + (\Delta t^{q + 1})^{2} \Big(\frac{2C_{0}^{2}}{c_{0}} + L + 1\Big) C_{p}^{2} \big\|\partial_{t}^{(q + 1)} \Tilde{U}_{N, \gamma}^{\boldsymbol{\beta}, \boldsymbol{x}_{0}}\big\|_{X(t_{\ell}, t_{\ell + 1})}^{2}.
    \end{aligned}
\end{equation}
\end{lemma}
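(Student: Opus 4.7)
The plan is to expand $\|E_1\|_{L^2}^2$ directly using Eq.~\eqref{p:1-6}, invoke the algebraic stability of the IRK scheme to control the quadratic form in $E_{0r}'$, and then carefully track constants with Young's inequality so that the stated coefficients ($4L$, $-c_0$, and $2C_0^2/c_0 + L + 1$) come out exactly. First I would write
\begin{equation*}
\|E_1\|_{L^2}^2 = \Delta t^2 \sum_{r,s}b_{RK}^{r}b_{RK}^{s}(E_{0r}', E_{0s}') - 2\Delta t \sum_r b_{RK}^{r}(E_{0r}', Q_1) + \|Q_1\|_{L^2}^2
\end{equation*}
and then apply the positive semi-definiteness of $\mathcal{M}=(b_{RK}^{r}a_{RK}^{rs} + b_{RK}^{s}a_{RK}^{sr} - b_{RK}^{r}b_{RK}^{s})$ to replace the first sum by $2\Delta t \sum_r b_{RK}^{r}(\Delta t \sum_s a_{RK}^{rs}E_{0s}', E_{0r}')$. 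Using the stage relation \eqref{p:1-5} to rewrite $\Delta t \sum_s a_{RK}^{rs}E_{0s}' = E_{0r}+Q_{0r}$, this yields
\begin{equation*}
\|E_1\|_{L^2}^2 \le 2\Delta t \sum_r b_{RK}^{r}(E_{0r}, E_{0r}') + 2\Delta t \sum_r b_{RK}^{r}(Q_{0r}-Q_1, E_{0r}') + \|Q_1\|_{L^2}^2.
\end{equation*}

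For the first sum, expanding $(E_{0r}, E_{0r}')$ via \eqref{p:1-4} and using \eqref{B1:26}, the Lipschitz continuity of $f$, and the coercivity of $a$, I obtain $(E_{0r}, E_{0r}') \le L\|E_{0r}\|_{L^2}^2 - c_0\|E_{0r}\|_{H^1}^2$. For the second sum I would split $(Q_{0r}-Q_1, E_{0r}')$ into the Lipschitz piece $(Q_{0r}-Q_1, F(U_{0r})-F(\tilde{U}_{0r}))$ and the bilinear piece $-a(E_{0r}, Q_{0r}-Q_1)$ (using symmetry of $a$ together with $Q_{0r}, Q_1 \in V_{N,\gamma}^{\boldsymbol{\beta},\boldsymbol{x}_0}$ and the definition of $A$). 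Applying Young's inequality with parameter $2$ to the Lipschitz piece gives $\tfrac{L}{4}\|Q_{0r}-Q_1\|_{L^2}^2 + L\|E_{0r}\|_{L^2}^2$, and Young with the weight $c_0/C_0^2$ to the bilinear piece gives $\tfrac{c_0}{2}\|E_{0r}\|_{H^1}^2 + \tfrac{C_0^2}{2c_0}\|Q_{0r}-Q_1\|_{H^1}^2$. Summing the two contributions together with the coefficient $2\Delta t \sum b_{RK}^{r}$ gives precisely the claimed stage-error terms $4L\Delta t\sum b_{RK}^{r}\|E_{0r}\|_{L^2}^2 - c_0\Delta t \sum b_{RK}^{r}\|E_{0r}\|_{H^1}^2$.

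Finally, the defect terms would be handled using $\|Q_{0r}-Q_1\|^2 \le 2(\|Q_{0r}\|^2 + \|Q_1\|^2)$ in both norms, the embedding $\|\cdot\|_{L^2} \le \|\cdot\|_{H^1}$, and (for the leftover $\|Q_1\|_{L^2}^2$) the duality estimate $\|Q_1\|_{L^2}^2 \le \|Q_1\|_{H^1}\|Q_1\|_{H^{-1}}$. Invoking the Peano-kernel bound \eqref{p:1-15}, all of $\Delta t\sum\|Q_{0r}\|_{H^1}^2$, $\Delta t\|Q_1\|_{H^1}^2$, and $\|Q_1\|_{H^1}\|Q_1\|_{H^{-1}}$ are bounded by $C_p^2(\Delta t^{q+1})^2\|\partial_t^{(q+1)}\tilde{U}_{N,\gamma}^{\boldsymbol{\beta},\boldsymbol{x}_0}\|_{X(t_\ell,t_{\ell+1})}^2$, and the combined defect prefactor is exactly $2C_0^2/c_0 + L + 1$.

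The main obstacle I anticipate is the book-keeping of Young-inequality parameters: a naive choice produces $3L$ (or $6L$) and $-2c_0$ instead of $4L$ and $-c_0$, both of which would disrupt the later iteration across time steps. A secondary subtlety is recovering the $+1$ in the defect prefactor, which requires the duality-type bound on $\|Q_1\|_{L^2}^2$ so that the fractional powers $\Delta t^{q+1/2}$ and $\Delta t^{q+3/2}$ coming from \eqref{p:1-15} combine to the desired $\Delta t^{2q+2}$.
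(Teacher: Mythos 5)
Your proposal is correct and follows essentially the same route as the paper's proof in \ref{ap:4}: expand $\|E_1\|_{L^2}^2$ from Eq.~\eqref{p:1-6}, use algebraic stability of $\mathcal{M}$ together with the stage relation \eqref{p:1-5} to reduce the quadratic form to $2\Delta t\sum_r b_{RK}^r(E_{0r}',E_{0r}+Q_{0r})$, then apply coercivity, continuity, and Lipschitz bounds with exactly the Young weights needed to produce $-c_0$, $4L$, and $\tfrac{2C_0^2}{c_0}+L+1$, finishing with the Peano-kernel estimate \eqref{p:1-15}. The only difference is cosmetic bookkeeping — you group the defects as $(Q_{0r}-Q_1,E_{0r}')$, whereas the paper handles the $Q_1$ cross term separately through the $H^{-1}$ bound on $E_{0r}'$ in Eq.~\eqref{B2:13} — and both orderings yield the identical constants.
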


\begin{proof}
By Eq. \eqref{p:1-6}, we have
\begin{equation}\label{p:1-3}
    \|E_{1}\|^{2}_{L^{2}} = \Big\|\Delta t \sum_{r = 1}^{q} b_{RK}^{r} E_{0r}^{\prime}\Big\|^{2}_{L^{2}} - 2\Big(\Delta t \sum_{r = 1}^{q}b_{RK}^{r} E_{0 r}^{\prime}, Q_{1}\Big) + \|Q_{1}\|^{2}_{L^{2}}.
\end{equation}

For the first term on the RHS of Eq. \eqref{p:1-3}, by using Eq. \eqref{p:1-5}, we have
\begin{equation}\label{p:3-13}
    \begin{aligned}
        \Big\|\Delta t \sum_{r = 1}^{q}b_{RK}^{r} E_{0 r}^{\prime}\Big\|^{2}_{L^{2}} &= \Delta t^{2}\sum_{r = 1}^{q}\sum_{s = 1}^{q}b_{RK}^{r}b_{RK}^{s}(E_{0 r}^{\prime}, E_{0 s}^{\prime})\\
        &\hspace{0.5cm} + \Delta t \sum_{r = 1}^{q}b_{RK}^{r}(E_{0 r}^{\prime}, E_{0 r} - \Delta t \sum_{s = 1}^{q}a_{RK}^{rs} E_{0s}^{\prime} + Q_{0 r})\\
        &\hspace{1cm} + \Delta t \sum_{r = 1}^{q}b_{RK}^{r}(E_{0 r} - \Delta t \sum_{s = 1}^{q}a_{RK}^{rs} E_{0s}^{\prime} + Q_{0 r}, E_{0 r}^{\prime})\\
        &= 2\Delta t\sum_{r = 1}^{q}b_{RK}^{r}(E_{0 r}^{\prime}, E_{0r} + Q_{0 r})- \Delta t^{2} \sum_{r = 1}^{q}\sum_{s = 1}^{q}\mathcal{M}_{rs}(E_{0 r}^{\prime}, E_{0 s}^{\prime}),
    \end{aligned}
\end{equation}
where $\mathcal{M}\coloneqq (a^{rs}_{RK}b^{r}_{RK} + a^{sr}_{RK}b^{s}_{RK} - b^{r}_{RK}b^{s}_{RK})_{r, s = 1}^{q}\in\mathbb{R}^{q\times q}$. Eq. \eqref{p:3-13} is given by \citep[page 606]{Ostermann1995}. Additionally, since the matrix $\mathcal{M}$ is assumed to be positive semi-definite, we have
% and the Gamma matrix $(E_{0 r}^{\prime}, E_{0 s}^{\prime})_{r,s = 1}^{q}$ is positive semi-definite, we have
\begin{equation}\label{p:1-9}
    \Big\|\Delta t \sum_{r = 1}^{q}b_{RK}^{r} E_{0 r}^{\prime}\Big\|^{2}_{L^{2}} \le 2\Delta t\sum_{r = 1}^{q}b_{RK}^{r}(E_{0 r}^{\prime}, E_{0 r} + Q_{0r}).
\end{equation}

Furthermore, by plugging Eq. \eqref{p:1-4} of $E_{0 r}^{\prime}$ into the RHS of Eq. \eqref{p:1-9}, we have
\begin{equation}\label{p:3-3}
    \begin{aligned}
        (E_{0 r}^{\prime}, E_{0 r} + Q_{0 r}) = \big(F(U_{N}^{0 r}; t_{0r}) - F(\tilde{U}_{N}^{0 r}; t_{0r}), E_{0 r} + Q_{0 r}\big)
         - \big(A(t_{0r})E_{0 r}, E_{0 r} + Q_{0 r}\big).
    \end{aligned}
\end{equation}
As the operator $A(\cdot; t)$ in Eq. \eqref{B1:26} satisfies $\big(A(E_{0 r}; t
), E_{0 r}\big) = a(E_{0 r}, E_{0 r}; t)$, we have
\begin{equation}\label{p:3-17}
        c_{0} \|E_{0 r}\|_{H^{1}}^{2} \le \big(A(E_{0 r};t_{0r}), E_{0 r}\big),\,\,
        \big(A(E_{0 r};t_{0r}), Q_{0r}\big) \le C_{0} \|E_{0 r}\|_{H^{1}} \|Q_{0 r}\|_{H^{1}}.
\end{equation}
For the nonlinear operator $F(u; t)$ defined in Eq. \eqref{B1:26}, we have
\begin{equation}\label{p:1-14}
    \big(F(U_{N}^{0 r}; t_{0r}) - F(\tilde{U}_{N}^{0 r}; t_{0r}), E_{0 r} + Q_{0 r}\big) \le L\|E_{0 r}\|_{L^{2}} \|E_{0r} + Q_{0r}\|_{L^{2}}.
\end{equation}
Combining Eqs. \eqref{p:3-3}, \eqref{p:3-17}, and~\eqref{p:1-14}, we have
\begin{equation}\label{p:3-4}
    \begin{aligned}
        (E_{0 r}^{\prime}, E_{0 r} + Q_{0 r}) \le -c_{0}\|E_{0 r}\|_{H^{1}}^{2} + C_{0}\|E_{0 r}\|_{H^{1}} \|Q_{0 r}\|_{H^{1}}+ L \|E_{0 r}\|_{L^{2}}^{2} + L \|E_{0 r}\|_{L^{2}} \|Q_{0 r}\|_{L^{2}}.
    \end{aligned}
\end{equation}
Finally, using the upper bound for $(E_{0 r}^{\prime}, E_{0 r} + Q_{0 r})$ (Eq. \eqref{p:3-4}) on the RHS of Eq. \eqref{p:1-9}, we have
\begin{equation}
    \begin{aligned}
        \Big\|\Delta t \sum_{r = 1}^{q}b_{RK}^{r} E_{0 r}^{\prime}\Big\|^{2}_{L^{2}} &\le 2\Delta t \sum_{r = 1}^{q}b_{RK}^{r} \big(-c_{0}\|E_{0 r}\|_{H^{1}}^{2} + C_{0}\|E_{0 r}\|_{H^{1}} \|Q_{0 r}\|_{H^{1}}\\
        &\hspace{2cm}+ L \|E_{0 r}\|_{L^{2}}^{2} + L \|E_{0 r}\|_{L^{2}} \|Q_{0 r}\|_{L^{2}}\big).
    \end{aligned}
\label{B2:3}
\end{equation}

For the second term on the RHS of Eq. \eqref{p:1-3}, using the Cauchy inequality, we have
\begin{equation}\label{B2:2}
    2\Big(\Delta t \sum_{r = 1}^{q}b_{RK}^{r} E_{0 r}^{\prime}, Q_{1}\Big) \le 2\Delta t \sum_{r = 1}^{q}b_{RK}^{r}\|E_{0r}^{\prime}\|_{L^{2}}\|Q_{1}\|_{L^{2}}.
\end{equation}
From Eq. \eqref{p:1-4}, we have
\begin{equation}\label{B2:8}
    (E_{0 r}^{\prime}, v) = \big(F(U_{N}^{0 r}; t_{0r}) - F(\tilde{U}_{N}^{0 r}; t_{0r}), v\big) - \big(A(E_{0 r};t_{0r}), v\big),~\forall v \in H^{1}(\mathbb{R}^{d}).
\end{equation}
Furthermore, since
\begin{equation}\label{B2:9}
    \big|\big(F(U_{0 r}) - F(\tilde{U}_{0 r}), v\big) - \big(A(E_{0 r};t_{0r}), v\big)\big| \le L\|E_{0r}\|_{L^{2}}\|v\|_{L^{2}} + C_{0}\|E_{0r}\|_{H^{1}}\|v\|_{L^{2}},
\end{equation}
we can show that
\begin{equation}
    \big|(E_{0 r}^{\prime}, v)\big|
    \le L\|E_{0r}\|_{L^{2}}\|v\|_{L^{2}} + C_{0}\|E_{0r}\|_{H^{1}}\|v\|_{L^{2}},
\end{equation}
and therefore, we have
\begin{equation}\label{B2:13}
    \begin{aligned}
        \|E_{0 r}^{\prime}\|_{H^{-1}} = \max_{v \in H^{1}} \frac{|(E_{0 r}^{\prime}, v)|}{\|v\|_{H^{1}}} &\le \frac{C_{0}\|E_{0 r}\|_{H^{1}} \|v\|_{H^{1}} + L\|E_{0 r}\|_{L^{2}}\|v\|_{L^{2}}}{\|v\|_{H^{1}}}\\
        &\le C_{0} \|E_{0 r}\|_{H^{1}} + L\|E_{0 r}\|_{L^{2}}.
    \end{aligned}
\end{equation}
Combining the upper bound of $\|E_{0r}^{\prime}\|_{H^{-1}}$ (Eq. \eqref{B2:13}) with Eq. \eqref{B2:2}, we have
\begin{equation}\label{p:3-6}
    \begin{aligned}
        \Big(\Delta t \sum_{r = 1}^{q}b_{RK}^{r} E_{0 r}^{\prime}, Q_{1}\Big) &\le \Delta t \sum_{r = 1}^{q}b_{RK}^{r} \|E_{0 r}^{\prime}\|_{H^{-1}} \|Q_{1}\|_{H^{1}}\\
        &\le \Delta t C_{0} \sum_{r = 1}^{q}b_{RK}^{r} \|E_{0 r}\|_{H^{1}}\|Q_{1}\|_{H^{1}}+ \Delta t L\sum_{r = 1}^{q} b_{RK}^{r} \|E_{0r}\|_{L^{2}}\|Q_{1}\|_{H^{1}}.
    \end{aligned}
\end{equation}

Considering the upper bounds (Eqs. \eqref{B2:3} and \eqref{p:3-6}) for the first two terms on the RHS of Eq. \eqref{p:1-3}, we can show that
\begin{equation}\label{p:3-8}
    \begin{aligned}
        \|E_{1}\|^{2}_{L^{2}} &\le 2\Delta t \sum_{r = 1}^{q}b_{RK}^{r}\big(-c_{0} \|E_{0 r}\|^{2}_{H^{1}} + C_{0} \|E_{0 r}\|_{H^{1}}\|Q_{0 r}\|_{H^{1}}\\
        &\quad + L\|E_{0 r}\|_{L^{2}}^{2} + L\|E_{0 r}\|_{L^{2}}\|Q_{0 r}\|_{L^{2}}\big) + 2\Delta t C_{0}\sum_{r = 1}^{q}b_{RK}^{r}\|E_{0 r}\|_{H^{1}}\|Q_{1}\|_{H^{1}}\\
        &\quad\quad  + 2\Delta t L\sum_{r = 1}^{q}b_{RK}^{r}\|E_{0 r}\|_{L^{2}}\|Q_{1}\|_{H^{1}} + \|Q_{1}\|^{2}_{L^{2}}.
    \end{aligned} 
\end{equation}
Applying the Cauchy inequality in Eq. \eqref{p:3-8}, we have
\begin{equation}\label{B2:14}
    \begin{aligned}
       & \|E_{1}\|^{2}_{L^{2}} + 2\Delta t \sum_{r = 1}^{q}b_{RK}^{r} c_{0}\|E_{0 r}\|_{H^{1}}^{2} \le \|Q_{1}\|^{2}_{L^{2}} + \Delta t \sum_{r = 1}^{q}b_{RK}^{r}\Big(2L\|E_{0r}\|_{L^{2}}^{2}\\
        &\hspace{1cm}+ \frac{c_{0}}{2}\|E_{0r}\|_{H^{1}}^{2} + \frac{2C_{0}^{2}}{c_{0}} \|Q_{0r}\|_{H^{1}}^{2}+ L\|E_{0r}\|_{L^{2}}^{2} + L\|Q_{0r}\|_{L^{2}}^{2}\\
       &\hspace{1.5cm}+ \frac{c_{0}}{2}\|E_{0r}\|_{H^{1}}^{2} + \frac{2C_{0}^{2}}{c_{0}} \|Q_{1}\|_{H^{1}}^{2} + L\|E_{0r}\|_{L^{2}}^{2} + L\|Q_{1}\|_{L^{2}}^{2}\Big).
    \end{aligned}
\end{equation}
Furthermore, 
\begin{equation}\label{B2:15}
    \|Q_{1}\|^{2}_{L^{2}} \le \Delta t \|Q_{1}\|_{H^{1}}^{2} + \frac{\Delta t}{2} \|Q_{1}/\Delta t\|_{H^{-1}}^{2}.
\end{equation}
Thus, by replacing the first $\|Q_{1}\|^{2}_{L^{2}}$ on the RHS of Eq. \eqref{B2:14} with its upper bound in Eq. \eqref{B2:15}, we have
\begin{equation}
    \begin{aligned}
        \|E_{1}\|^{2}_{L^{2}} &+ 2c_{0}\Delta t\sum_{r = 1}^{q}b_{RK}^{r}\|E_{0 r}\|_{H^{1}}^{2}\\
        &\le c_{0}\Delta t\sum_{r = 1}^{q}b_{RK}^{r}\|E_{0 r}\|_{H^{1}}^{2} + 4L \Delta t \sum_{r = 1}^{q}b_{RK}^{r}\|E_{0 r}\|^{2}_{L^{2}}\\
        &\quad + \Big(\frac{2C_{0}^{2}}{c_{0}} + L + 1\Big)\Delta t \sum_{r = 1}^{q}b_{RK}^{r}\big(\|Q_{0 r}\|_{H^{1}}^{2} + \|Q_{1}\|_{H^{1}}^{2} + \|Q_{1}/\Delta t\|_{H^{-1}}^{2}\big).
    \end{aligned}
\label{p:3-15}
\end{equation}
$\sum_{r = 1}^{q}b_{RK}^{r} = 1$ holds for all the IRK schemes \cite{Lubich2016}. Additionally, we have assumed that $b_{RK}^{r} \ge 0$. Thus, we have
\begin{equation}\label{p:3-19}
    \begin{aligned}
        \sum_{r = 1}^{q} b_{RK}^{r}\big(\|Q_{0 r}\|_{H^{1}}^{2} + \|Q_{1}\|_{H^{1}}^{2} + \|Q_{1}/\Delta t\|_{H^{-1}}^{2}\big)&= \sum_{r = 1}^{q}b_{RK}^{r}\|Q_{0r}\|_{H^{1}}^{2}  + \|Q_{1}\|_{H^{1}}^{2} + \|Q_{1}/\Delta t\|_{H^{-1}}^{2}\\
        &\le \sum_{r = 1}^{q}\|Q_{0r}\|_{H^{1}}^{2} + \|Q_{1}\|_{H^{1}}^{2} + \|Q_{1}/\Delta t\|_{H^{-1}}^{2}.
    \end{aligned}
\end{equation}
By plugging in the upper bounds for the summation of squared $H_1$ norms of $Q_{0r}$ and $Q_{1}$ given in Eq. \eqref{p:1-15} into Eq. \eqref{p:3-15}, we have
\begin{equation}\label{p:1-10}
    \begin{aligned}
        \|E_{1}\|^{2}_{L^{2}} & \le -c_{0} \Delta t \sum_{r = 1}^{q} b_{RK}^{r}\|E_{0 r}\|_{H^{1}}^{2} + 4L \Delta t \sum_{r = 1}^{q} b_{RK}^{r} \|E_{0 r}\|^{2}_{L^{2}}\\
        &\quad + \Big(\frac{2C_{0}^{2}}{c_{0}} + L + 1\Big)\Delta t \Big(\sum_{r = 1}^{q}\|Q_{0r}\|_{H^{1}}^{2} + \|Q_{1}\|_{H^{1}}^{2} + \|Q_{1}/\Delta t\|_{H^{-1}}^{2}\Big)\\
        & \le -c_{0} \Delta t \sum_{r = 1}^{q} b_{RK}^{r}\|E_{0 r}\|_{H^{1}}^{2} + 4L \Delta t \sum_{r = 1}^{q} b_{RK}^{r} \|E_{0 r}\|^{2}_{L^{2}}\\
        &\quad + (\Delta t^{q + 1})^{2}\Big(\frac{2C_{0}^{2}}{c_{0}} + L + 1\Big) C_{p}^{2}\big\|\partial_{t}^{(q + 1)} \Tilde{U}_{N, \gamma}^{\boldsymbol{\beta}, \boldsymbol{x}_{0}}\big\|_{X(t_{\ell}, t_{\ell + 1})}^{2},
    \end{aligned} 
\end{equation}
which proves Lemma~\ref{th:7-1}.
\end{proof}

From Lemma~\ref{th:7-1}, the error bound of the IRK scheme is readily available when $4L \le c_{0}$, and in this case, we have
\begin{equation}
    \|E_{1}\|^{2}_{L^{2}} \le (\Delta t^{q + 1})^{2}\Big(\frac{2C_{0}^{2}}{c_{0}} + L + 1\Big) C_{p}^{2}\big\|\partial_{t}^{(q + 1)} \Tilde{U}_{N, \gamma}^{\boldsymbol{\beta}, \boldsymbol{x}_{0}}\big\|_{X(t_{\ell}, t_{\ell + 1})}^{2}.
\end{equation}
In the following, we consider the case where $c_{0} < 4L$. Based on Lemma \ref{th:7-1}, we give the error bound of the IRK scheme by eliminating the stage variable errors $E_{0r}$.

Applying the Cauchy inequality in Eq. \eqref{p:1-5}, we have
\begin{equation}\label{B2:4}
    \begin{aligned}
        \|E_{0r}\|_{H^{-1}}^{2} = \Big\|\Delta t\sum_{s = 1}^{q}a_{RK}^{rs}E_{0s}^{\prime} - Q_{0r}\Big\|_{H^{-1}}^{2}&\le 2\Delta t^{2} \Big(\sum_{s = 1}^{q}a_{RK}^{rs}\|E_{0s}^{\prime}\|_{H^{-1}}\Big)^{2} + 2\|Q_{0r}\|_{H^{-1}}^{2}\\
        &\le 2C_{ab, r}^{2} \Delta t^{2} \sum_{s = 1}^{q}b_{RK}^{s}\|E_{0s}^{\prime}\|_{H^{-1}}^{2} + 2\|Q_{0r}\|_{H^{-1}}^{2}.
    \end{aligned}
\end{equation}
where $C_{ab, r}^{2} = \sum_{s = 1}^{q}\big(a_{RK}^{rs}\big)^{2}/b_{RK}^{s}$. Furthermore, by replacing the stage variable errors $\|E_{0r}\|_{H^{-1}}$ with their upper bounds given in Eq. \eqref{B2:4}, we have
\begin{equation}\label{p:3-12}
    \begin{aligned}
        \|E_{0r}\|^{2}_{L^{2}} & = (E_{0 r}, E_{0 r})\le \frac{1}{2\tau_{0}}\|E_{0 r}\|_{H^{1}}^{2} + \frac{\tau_{0}}{2}\|E_{0 r}\|_{H^{-1}}^{2}\\
        & \le \frac{1}{2\tau_{0}}\|E_{0 r}\|_{H^{1}}^{2} + \tau_{0}\Big(C_{ab, r}^{2} \Delta t^{2} \sum_{s = 1}^{q}b_{RK}^{s}\|E_{0s}^{\prime}\|_{H^{-1}}^{2} + \|Q_{0r}\|_{H^{-1}}^{2}\Big).
    \end{aligned}
\end{equation}
From Eq. \eqref{B2:13}, we have
\begin{equation}\label{B2:5}
    \|E_{0s}^{\prime}\|_{H^{-1}}^{2} \le (C_{0} + L)^{2} \|E_{0s}\|_{H^{1}}^{2}.
\end{equation}
By replacing $\|E_{0s}^{\prime}\|_{H^{-1}}^{2}$ on the RHS of Eq. \eqref{p:3-12} with its upper bound in Eq. \eqref{B2:5}, we have the inequality
\begin{equation}\label{B2:6}
    \begin{aligned}
        \|E_{0r}\|^{2}_{L^{2}} &\le \frac{1}{2\tau_{0}}\|E_{0r}\|_{H^{1}}^{2} + \tau_{0} (C_{0} + L)^{2}C_{ab, r}^{2}\Delta t^{2} \sum_{s = 1}^{q}b_{RK}^{s} \|E_{0s}\|_{H^{1}}^{2}+ \tau_{0} \|Q_{0r}\|_{H^{-1}}^{2}.
    \end{aligned}
\end{equation}
Furthermore, by replacing $\|E_{0s}\|_{L^{2}}^2$ on the RHS of Eq. \eqref{p:3-18} with its upper bound given in Eq. \eqref{B2:6}, we have
\begin{equation}\label{B2:12}
    \begin{aligned}
        \|E_{1}\|^{2}_{L^{2}} &\le - c_{0}\Delta t \sum_{r = 1}^{q}b_{RK}^{r} \|E_{0r}\|^{2}_{H^{1}} + 4L\Delta t \sum_{r = 1}^{q}b_{RK}^{r}\big(\frac{1}{2\tau_{0}}\|E_{0r}\|_{H^{1}}^{2}\\
        &\quad + \tau_{0} (C_{0} + L)^{2}C_{ab, r}^{2}\Delta t^{2} \sum_{s = 1}^{q}b_{RK}^{s} \|E_{0s}\|_{H^{1}}^{2} + \tau_{0} \|Q_{0r}\|_{H^{-1}}^{2}\big)\\
        &\quad + (\Delta t^{q + 1})^{2} \Big(\frac{2C_{0}^{2}}{c_{0}} + L + 1\Big) C_{p}^{2} \big\|\partial_{t}^{(q + 1)} \Tilde{U}_{N, \gamma}^{\boldsymbol{\beta}, \boldsymbol{x}_{0}}\big\|_{X(t_{\ell}, t_{\ell + 1})}^{2}.
    \end{aligned}
\end{equation}

We plug in the value $\tau_{0} = 4L/c_{0}$ into Eq. \eqref{B2:12} to get
\begin{equation}\label{B2:16}
    \begin{aligned}
        &\|E_{1}\|^{2}_{L^{2}}  \le -\frac{c_{0}}{2}\Delta t \sum_{r = 1}^{q}b_{RK}^{r} \|E_{0r}\|^{2}_{H^{1}}\\
        &\hspace{1cm} + \frac{16L^{2}}{c_{0}} (C_{0} + L)^{2}C_{ab, r}^{2}\Delta t^{3} \sum_{s = 1}^{q}b_{RK}^{s} \|E_{0s}\|_{H^{1}}^{2} + \frac{16L^{2}}{c_{0}}\Delta t \sum_{r = 1}^{q}b_{RK}^{r} \|Q_{0r}\|_{H^{-1}}^{2}\\
        &\hspace{1cm}  + (\Delta t^{q + 1})^{2} \Big(\frac{2C_{0}^{2}}{c_{0}} + L + 1\Big) C_{p}^{2} \big\|\partial_{t}^{(q + 1)} \Tilde{U}_{N, \gamma}^{\boldsymbol{\beta}, \boldsymbol{x}_{0}}\big\|_{X(t_{\ell}, t_{\ell + 1})}^{2}.
    \end{aligned}
\end{equation}
Additionally, by using Eq. \eqref{p:1-15}, we can prove that
\begin{equation}\label{B2:17}
    \begin{aligned}
        \Delta t \sum_{r = 1}^{q}b_{RK}^{r} \|Q_{0r}\|_{H^{-1}}^{2} &\le \Delta t \sum_{r = 1}^{q}b_{RK}^{r} \|Q_{0r}\|_{H^{1}}^{2}\\
        &\le \Delta t \sum_{r = 1}^{q} \big(\|Q_{0r}\|_{H^{1}}^{2} + \|Q_{1}\|_{H^{1}}^{2} + \|Q_{1}/\Delta t\|_{H^{-1}}^{2}\big)\\
        &\le (\Delta t^{q + 1})^{2}  C_{p}^{2} \big\|\partial_{t}^{(q + 1)} \Tilde{U}_{N, \gamma}^{\boldsymbol{\beta}, \boldsymbol{x}_{0}}\big\|_{X(t_{\ell}, t_{\ell + 1})}^{2}.
    \end{aligned}
\end{equation}
Finally, by replacing the $\|Q_{0r}\|_{H^{-1}}$ in Eq. \eqref{B2:16} with its upper bound given in \eqref{B2:17}, we have
\begin{equation}\label{p:1-13}
    \begin{aligned}
        \|E_{1}\|^{2}_{L^{2}} &\le C_{R}^{2}C_{p}^{2} (\Delta t^{q + 1})^{2}\|\partial_{t}^{(q + 1)} \Tilde{U}_{N, \gamma}^{\boldsymbol{\beta}, \boldsymbol{x}_{0}}\|_{X(t_{\ell}, t_{\ell + 1})}^{2} \\
        &\quad + \Big(\frac{16L^2}{c_{0}} (C_{0} + L)^{2}C_{ab}^{2}\Delta t^{2} - \frac{c_{0}}{2}\Big) \Delta t\sum_{r = 1}^{q}b_{RK}^{r}\|E_{0r}\|_{H^{1}}^{2},
    \end{aligned}
\end{equation}
where 
\begin{equation}
    C_{R}^{2} \coloneqq \frac{2C_{0}^{2} + 32L^{2}}{c_{0}} + L + 1 \text{ and } C_{ab}^{2} = \sum_{r = 1}^{q}b_{RK}^{r}C_{ab, r}^{2} = \sum_{s = 1}^{q}\sum_{r = 1}^{q} \big(a_{RK}^{rs}\big)^{2}b_{RK}^{r}/b_{RK}^{s}.
\end{equation}
Finally, if
\begin{equation}
    \Delta t \le \frac{c_{0}}{4\sqrt{2}L(C_{0} + L)C_{ab}},
\label{cond_t}
\end{equation}
we have
\begin{equation}\label{p:1-17}
    \frac{16L^2}{c_{0}} (C_{0} + L)^{2}C_{ab}^{2}\Delta t^{2} - \frac{c_{0}}{2} \le 0.
\end{equation}
Thus, if the condition Eq.~\eqref{cond_t} on the time step $\Delta t$ is satisfied, the last term of Eq. \eqref{p:1-13} is nonpositive, and we can conclude that
\begin{equation}
    \big\|U_{N, \gamma}^{\boldsymbol{\beta}, \boldsymbol{x}_{0}}(\cdot, t_{\ell + 1}) - \Tilde{U}_{N, \gamma}^{\boldsymbol{\beta}, \boldsymbol{x}_{0}}(\cdot, t_{\ell + 1})\big\|_{L^{2}} \le C_{R}C_{p} \Delta t^{q + 1}\|\partial_{t}^{(q + 1)} \Tilde{U}_{N, \gamma}^{\boldsymbol{\beta}, \boldsymbol{x}_{0}}\|_{X(t_{\ell}, t_{\ell + 1})}.
\end{equation}

\section{Source terms}\label{appendix:1}
In this part, we shall give the source terms on the RHS of 
Eqs.~\eqref{exam:1}, \eqref{exam:23}, \eqref{exam:35}, and \eqref{exam:33} in Examples \ref{example1}, \ref{example3}, \ref{example4}, and \ref{example5}, respectively. First, we define
\begin{equation}
    A_{s}^{\gamma} \coloneqq \frac{2^{2s} \Gamma(s + \gamma) \Gamma(s + 1/2)}{\sqrt{\pi} \Gamma(\gamma)}, 
\end{equation}
where $\Gamma(\cdot)$ is the Gamma function. Additionally, we denote the hypergeometric function of the first kind (see \cite{Tang2018}) as 
\begin{equation}
    _{1}F_{1}(a, b; x) = \sum_{k = 0}^{\infty}\frac{(a)_{k}}{(b)_{k}}\frac{x^{k}}{k!},
\end{equation}
where $(a)_{k}$ denotes the rising Pochhammer symbol
\begin{equation}
    (a)_{0} = 1,~ (a)_{k} = \frac{\Gamma(a + k)}{\Gamma(a)}.
\end{equation}
We denote the hypergeometric function of the second kind (see \cite{Tang2019}) as
\begin{equation}
    _{2}F_{1}(a, b; c; x) = \sum_{k = 0}^{\infty}\frac{(a)_{k}(b)_{k}}{(c)_{k}} \frac{x^{k}}{k!}.
\end{equation}

~\\
\textbf{Example \ref{example1}}
\begin{equation}\label{f1}
    \begin{aligned}
        f(x, t) &= \frac{12}{(t + 1)^{5}} \frac{z^{2}}{(1 + z^2)^7} + \frac{(1 + z^2)^{12} - 1}{(1 + z^2)^{18}}\\
        &\quad + \frac{1}{t + 1}\frac{1}{(1 + z^2)^{13/2}}A_{1/2}^{6}~_{2}F_{1}\Big(\frac{13}{2}, -\frac{1}{2}; \frac{1}{2}, \frac{z^2}{1 + z^2}\Big),
    \end{aligned}
\end{equation}
where
\begin{equation}
    z = \frac{x}{t + 1}.
\end{equation}

~\\
\textbf{Example \ref{example3}}
\begin{equation}\label{f3}
    \begin{aligned}
        f(x, y, t) &= \frac{1}{t + 1} \frac{z^2}{(1 + z^2)^{8}} + \frac{1}{t + 1}\frac{1}{(1 + z^2)^7}\Big(1 - \frac{1}{t + 1}\frac{1}{(1 + z^2)^7}\Big)\\
        &\quad + \frac{1}{t + 1}A^{7}_{1/2}~_{2}F_{1}\Big(\frac{15}{2}, -\frac{1}{2}; \frac{1}{2}; \frac{z^2}{1 + z^2}\Big),
    \end{aligned}
\end{equation}
where
\begin{equation}
    z = \frac{1}{t + 1}\left(\big(x - \cos(\tfrac{\pi}{3})t\big)^{2} + \big(y - \sin(\tfrac{\pi}{3}) t\big)^{2}\right)^{1/2}.
\end{equation}

~\\
\textbf{Example \ref{example4}}
\begin{equation}\label{f5}
    \begin{aligned}
        f(x, y, z; t) &= \frac{\sin(x + 6y/5 + z/2)}{(3t + 1)^{3/2}}\left(\frac{6r^{2}}{(6t + 2)^{2}} - \frac{9}{6t + 2}\right)\exp\left(\frac{-r^{2}}{6t + 2}\right)\\
        &\quad+ (-\Delta)^{3/4}\left(\frac{\sin\big(x + 6y/5 + z/2\big)}{(3t + 1)^{3/2}} \exp\left(\frac{-r^{2}}{6t + 2}\right)\right),
    \end{aligned}
\end{equation}
where 
\begin{equation}
    r^{2} = x^{2} + y^{2} + z^{2}.
\end{equation}
%In this example, since the solution $u$ of Eq. \eqref{exam:5} is anisotropic, the fractional Laplacian term $(-\Delta)^{3/4}u$ can not be express analytically. We use the mapped Jacobi method to compute the fractional Laplacian term $(-\Delta)^{3/4}u$ with a relatively large expansion order, \textit{i.e.} $N = 100$.

~\\
\textbf{Example \ref{example5}}
\begin{equation}\label{f4}
    \begin{aligned}
        f(x, y, z, w,t) &=  \left(\frac{4\cos(p)}{(t + 1)^{2}} - \frac{2 p\sin(p)}{(t + 1)^3}\right)\exp\left(\frac{-r^{2}}{2(t + 1)}\right)\\
        &\quad + \frac{r^{2}\cos(p)}{(t + 1)^{2}} \exp\left(\frac{-r^{2}}{t + 1}\right),
    \end{aligned}
\end{equation}
where
\begin{equation}
    p = x + y + z + w\text{ and } r^{2} = x^{2} + y^{2} + z^{2} + w^{2}.
\end{equation}

\section{Implementation details of adaptive techniques}\label{appendix:2}
In this section, we discuss the implementation details of adaptive techniques in our AHMJ method. We first provide the overview of the AHMJ method in Fig. \ref{fig:flow chart}. Next, we present the algorithms for the moving, scaling, and $p$-adaptive techniques in Algorithms \ref{algorithm1}, \ref{algorithm2} and \ref{algorithm3}. Hyperparameters of the adaptive techniques used in all examples are in Table \ref{table:2}.
\newpage
\begin{figure}[h!]
    \centering
    \includegraphics[width = 4in]{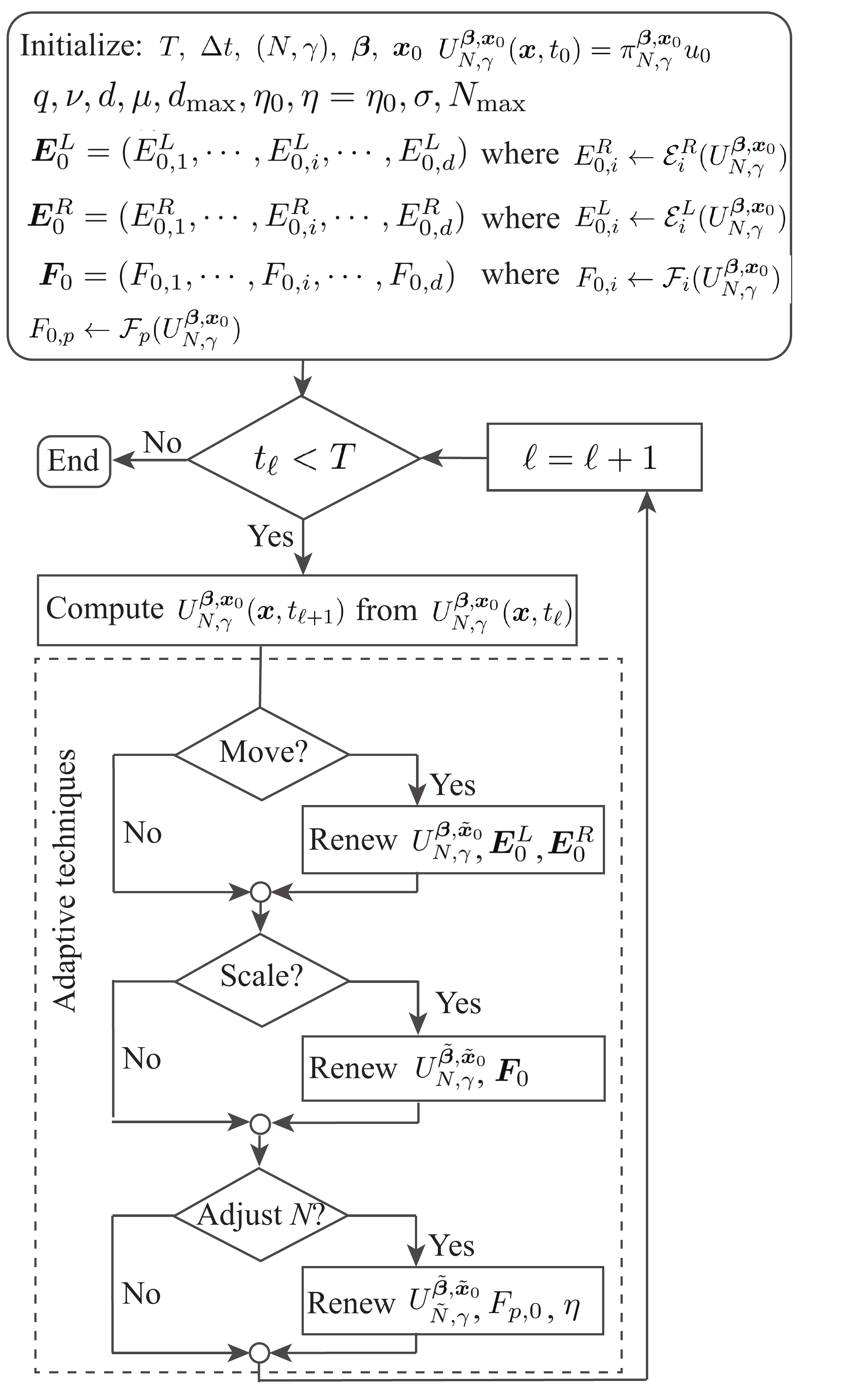}
    \caption{\footnotesize{\textnormal{The overview of the implementation of our proposed AHMJ method. $\tilde{\boldsymbol{x}}_{0}$, $\tilde{\boldsymbol{\beta}}_{0}$, and $\tilde{N}$ are updated by using the moving, scaling, and $p$-adaptive techniques in Algorithms \ref{algorithm1}, \ref{algorithm2} and \ref{algorithm3}, respectively.}}}
    \label{fig:flow chart}
\end{figure}

\begin{algorithm}
    \caption{The moving technique} \label{algorithm1}
    \begin{algorithmic}
        \STATE{\textbf{input} $U_{N, \gamma}^{\boldsymbol{\beta}, \boldsymbol{x}_{0}}$, $\mu$, $\delta$, $(d_{\text{max}, 1}, \cdots, d_{\text{max}, i}, \cdots, d_{\text{max}, d})$\\
        \quad $\boldsymbol{E}_{0}^{L} = (E_{0, 1}^{L}, \cdots, E_{0, i}^{L}, \cdots, E_{0, d}^{L})$, and $\boldsymbol{E}_{0}^{R} = (E_{0, 1}^{R}, \cdots, E_{0, i}^{R}, \cdots, E_{0, d}^{R})$.}
        \STATE{~}
        \FOR{$i = 1 : d$}
        \STATE{compute $E_{i}^{L} \gets \mathcal{E}_{i}^{L}(U_{N, \gamma}^{\boldsymbol{\beta}, \boldsymbol{x}_{0}})$}
        \STATE{$x_{i}^{L} \gets x_{0, i}$}
        \WHILE{$E_{i}^{L} > \mu E_{0, i}^{L}$ and $\big|x_{i}^{L}\big| \le d_{\max, i}$}
            \STATE{$x_{i}^{L} \gets x_{i}^{L} - \delta$}
            \STATE{$\Tilde{U}_{x_{0, i}} \gets \pi_{N, \gamma}^{\boldsymbol{\beta}, \tilde{\boldsymbol{x}}_{0}}U_{N, \gamma}^{\boldsymbol{\beta}, \boldsymbol{x}_{0}}$ where $\tilde{\boldsymbol{x}}_{0} = (x_{1}, \cdots, x_{i}^{L}, \cdots, x_{d})$}
            \STATE{compute $E_{i}^{L} \gets \mathcal{E}_{i}^{L}(\Tilde{U}_{x_{0, i}})$}
        \ENDWHILE
        \STATE{~}
        \STATE{compute $E_{i}^{R} \gets \mathcal{E}_{i}^{R}(U_{N, \gamma}^{\boldsymbol{\beta}, \boldsymbol{x}_{0}})$}
        \STATE{$x^{R}_{i} \gets x_{0, i}$}
        \WHILE{$E_{i}^{R} > \mu E_{0, i}^{R}$ and $\big|x^{R}_{i}\big| \le d_{\max, i}$}
            \STATE{$x^{R}_{i} \gets x^{R}_{i} + \delta$}
            \STATE{$\Tilde{U}_{x_{0, i}} \gets \pi^{\boldsymbol{\beta}, \tilde{\boldsymbol{x}}_{0}}_{N, \gamma}U_{N, \gamma}^{\boldsymbol{\beta}, \boldsymbol{x}_{0}}$ where $\tilde{\boldsymbol{x}}_{0} = (x_{1}, \cdots, x_{i}^{R}, \cdots, x_{d})$}
            \STATE{compute $E_{i}^{R} \gets \mathcal{E}_{i}^{R}(\Tilde{U}_{x_{0, i}})$}
        \ENDWHILE
        \STATE{~}
        \STATE{set $\Tilde{x}_{0, i} = x_{i}^{R} + x_{i}^{L} - x_{0, i}$}
        \ENDFOR
        \STATE{~}
        \STATE{update $\tilde{\boldsymbol{x}}_{0} \gets \big(\Tilde{x}_{0,1}, \cdots, \Tilde{x}_{0, i},\cdots, \Tilde{x}_{0, d}\big)$}
        \RETURN moved solution $U_{N, \gamma}^{\boldsymbol{\beta}, \Tilde{\boldsymbol{x}}_{0}} \gets \pi_{N, \gamma}^{\boldsymbol{\beta}, \tilde{\boldsymbol{x}}_{0}}U_{N, \gamma}^{\boldsymbol{\beta}, \boldsymbol{x}_{0}}$
        \FOR{$i = 1 : d$}
        \IF{the displacement has been changed in the $i^{\text{th}}$ direction}
            \STATE{update $E_{0, i}^{L} \gets \mathcal{E}^{L}_{i}(U_{N, \gamma}^{\boldsymbol{\beta}, \Tilde{\boldsymbol{x}}_{0}})$}
            \STATE{update $E_{0, i}^{R} \gets \mathcal{E}_{i}^{R}(U_{N, \gamma}^{\boldsymbol{\beta}, \Tilde{\boldsymbol{x}}_{0}})$}
        \ENDIF
\ENDFOR
\RETURN solution with updated $\tilde{\boldsymbol{x}}_0$: $U_{N, \gamma}^{\boldsymbol{\beta}, \Tilde{\boldsymbol{x}}_{0}} \gets \pi_{N, \gamma}^{\boldsymbol{\beta}, \tilde{\boldsymbol{x}}_{0}}U_{N, \gamma}^{\boldsymbol{\beta}, \Tilde{\boldsymbol{x}}_{0}}$
    \end{algorithmic}
\end{algorithm}

\begin{algorithm}
    \caption{The scaling technique} \label{algorithm2}
    \begin{algorithmic}
        \STATE{\textbf{input} $U_{N, \gamma}^{\boldsymbol{\beta}, \Tilde{\boldsymbol{x}}_{0}}$, $\boldsymbol{F}_{0} = (F_{0, 1}, \cdots, F_{0, i}, \cdots, F_{0, d}), \nu$}
        
        \STATE{~}
        \FOR{$i = 1 : d$}
        \STATE{compute $F_{i} \gets \mathcal{F}_{i}(U_{N, \gamma}^{\boldsymbol{\beta}, \Tilde{\boldsymbol{x}}_{0}})$}
        \IF{$F_{i} > \nu F_{0, i}$}
            \STATE{ $\Tilde{U}_{\beta_{i}} \gets \pi_{N, \gamma}^{\Tilde{\boldsymbol{\beta}}, \tilde{\boldsymbol{x}_{0}}}U_{N, \gamma}^{\boldsymbol{\beta}, \Tilde{\boldsymbol{x}}_{0}}$ where $\Tilde{\boldsymbol{\beta}} = (\beta_{1}, \cdots, \beta_{i} q, \cdots, \beta_{d})$}
            \STATE{compute $\Tilde{F}_{i} \gets \mathcal{F}_{i}(\Tilde{U}_{\beta_{i}})$}
            \IF{$\Tilde{F}_{i} < F_{i}$}
                \STATE{$\Tilde{\beta}_{i} \gets \beta_{i}$}
                \WHILE{$\Tilde{F}_{i} < F_{i}$}
                    \STATE{$\Tilde{\beta}_{i} \gets \Tilde{\beta}_{i}q$}
                    \STATE{$F_{0, i} \gets F_{i}$}
                    \STATE{$F_{i} \gets \Tilde{F}_{i}$}
                    \STATE{ $\Tilde{U}_{\beta_{i}} \gets \pi_{N, \gamma}^{\Tilde{\boldsymbol{\beta}}, \tilde{\boldsymbol{x}_{0}}}\Tilde{U}_{\beta_{i}}$ where $\Tilde{\boldsymbol{\beta}} = (\beta_{1}, \cdots, \Tilde{\beta}_{i} q, \cdots, \beta_{d})$}
                    \STATE{compute $\Tilde{F}_{i} \gets \mathcal{F}_{i}(\Tilde{U}_{\beta_{i}})$}
                \ENDWHILE
            \ELSE
                \STATE{$\Tilde{U}_{\beta_{i}} \gets \pi_{N, \gamma}^{\Tilde{\boldsymbol{\beta}}, \tilde{\boldsymbol{x}_{0}}}U_{N, \gamma}^{\boldsymbol{\beta}, \Tilde{\boldsymbol{x}}_{0}}$  where $\Tilde{\boldsymbol{\beta}} = (\beta_{1}, \cdots, \beta_{i}/q, \cdots, \beta_{d})$}
                \STATE{compute $\Tilde{F}_{i} \gets \mathcal{F}_{i}(\Tilde{U}_{\beta_{i}})$}
                \STATE{$\Tilde{\beta}_{i} \gets \beta_{i}$}
                \WHILE{$\Tilde{F}_{i} < F_{i}$}
                    \STATE{$\Tilde{\beta}_{i} \gets \Tilde{\beta}_{i}/q$}
                    \STATE{$F_{0, i} \gets F_{i}$}
                    \STATE{$F_{i} \gets \Tilde{F}_{i}$}
                    \STATE{$\Tilde{U}_{\beta_{i}} \gets \pi_{N, \gamma}^{\Tilde{\boldsymbol{\beta}}, \tilde{\boldsymbol{x}_{0}}}\Tilde{U}_{\beta_{i}}$ where $\Tilde{\boldsymbol{\beta}} = (\beta_{1}, \cdots, \Tilde{\beta}_{i}/q, \cdots, \beta_{d})$}
                    \STATE{compute $\Tilde{F}_{i} \gets \mathcal{F}_{i}(\Tilde{U}_{\beta_{i}})$}
                \ENDWHILE
            \ENDIF
        \ENDIF
        \ENDFOR
        \STATE{~}
        \STATE{update $\Tilde{\boldsymbol{\beta}} = (\Tilde{\beta}_{1}, \cdots, \Tilde{\beta}_{i}, \cdots, \Tilde{\beta}_{d})$}
        \RETURN solution with updated $\tilde{\boldsymbol{\beta}}$: $U_{N, \gamma}^{\Tilde{\boldsymbol{\beta}}, \boldsymbol{x}_{0}} \gets \pi_{N, \gamma}^{\Tilde{\boldsymbol{\beta}}, \boldsymbol{x}_{0}}U_{N, \gamma}^{\boldsymbol{\beta}, \boldsymbol{x}_{0}}$
    \end{algorithmic}
\end{algorithm}

\begin{algorithm}
    \caption{The $p$-adaptive technique} \label{algorithm3}
    \begin{algorithmic}
        \STATE{\textbf{input} $U_{N, \gamma}^{\Tilde{\boldsymbol{\beta}}, \Tilde{\boldsymbol{x}}_{0}}$, $F_{p, 0}$, $\eta$, $\eta_{0}$, $\sigma$, $N_{\text{max}}$}
        \STATE{~}
        \STATE{compute $F_{p} \gets \mathcal{F}_{p}(U_{N, \gamma}^{\Tilde{\boldsymbol{\beta}}, \Tilde{\boldsymbol{x}}_{0}})$}
        \IF{$F_{p} > \eta F_{p, 0}$}
            \STATE{$\Tilde{N} \gets N$}
            \WHILE{$F_{p} > \eta F_{p, 0}$ and $\Tilde{N} \le N + N_{\max}$}
                \STATE{$\Tilde{N} \gets \Tilde{N} + 1$}
                \STATE{$\Tilde{U}_{N} \gets \pi_{\Tilde{N}, \gamma}^{\tilde{\boldsymbol{\beta}}, \tilde{\boldsymbol{x}}_{0}}U_{N, \gamma}^{\Tilde{\boldsymbol{\beta}}, \Tilde{\boldsymbol{x}}_{0}}$}
                \STATE{compute $F_{p} \gets \mathcal{F}(\Tilde{U}_{N})$}
            \ENDWHILE
        \ELSIF{$F_{p} < F_{p, 0}/\eta_{0} $}
            \STATE{$\Tilde{N} \gets N$}
            \WHILE{$F_{p} < F_{p,0}$}
                \STATE{$\Tilde{N} \gets \Tilde{N} - 1$}
                \STATE{$\Tilde{U}_{N} \gets \pi_{\Tilde{N}, \gamma}^{\tilde{\boldsymbol{\beta}}, \tilde{\boldsymbol{x}}_{0}}U_{N, \gamma}^{\Tilde{\boldsymbol{\beta}}, \Tilde{\boldsymbol{x}}_{0}}$}
                \STATE{compute $F_{p} \gets \mathcal{F}(\Tilde{U}_{N})$}
            \ENDWHILE
        \ENDIF
        \STATE{~}
        \STATE{$F_{0, p} \gets F_{p}$}
        \IF{$\Tilde{N} > N$ \COMMENT{the expansion order is increased}}
            \STATE{$\eta \gets \sigma \eta$}
        \ENDIF
        \RETURN{solution with updated $N$: $U_{\Tilde{N}, \gamma}^{\boldsymbol{\beta}, \boldsymbol{x}_{0}} \gets \pi_{\Tilde{N}, \gamma}^{\boldsymbol{\beta}, \boldsymbol{x}_{0}}U_{N, \gamma}^{\boldsymbol{\beta}, \boldsymbol{x}_{0}}$}
    \end{algorithmic}
\end{algorithm}

\begin{table}[h!]\label{table:2}
    \centering
    \caption{\footnotesize{\textnormal{Hyperparameters when implementing the adaptive techniques. $q$ and $\delta$ are the smallest scaling factor change ratio and the smallest displacement change, respectively. $d_{\max}$ stands for the maximum displacement allowed per time step. $\mu$ stands for the moving technique activation threshold and $\nu$ is the scaling technique activation threshold. Additionally, $N_{\max}$ is the maximum increase of expansion order $N$ per time step. $\eta$ is the threshold for changing the expansion order $N$ and $\eta_{0}$ is the initial value of $\eta$. $\sigma$ is the adjustment ratio for $\eta$ if the expansion order $N$ is increased.}}}
\vspace{0.1in}
    \begin{tabular}{cccccccccc}
    \toprule
    \toprule
    \small Example \& Method & $\delta$ & $d_{\text{max}}$ & $\mu$ & $q$ & $\nu$ & $N_{\text{max}}$ & $\eta_{0}$ & $\sigma$\\
    \midrule
    \small4.1 mapped Jacobi &0.01 & $\slash$ & $\slash$ & 0.99 & 1.01 & 3 & 1.2 & 1.2\\
    \small4.1 Hermite &0.01 & $\slash$ & $\slash$ & 0.99 & 1.01 & 3 & 1.2 & 1.2\\
    \small4.2 mapped Jacobi &0.01 & 0.2 & 1.0005 & 0.99 & 1.01 & $\slash$ & $\slash$ & $\slash$\\
    \small4.2 Hermite &0.01 & 0.2 & 1.0005 & 0.99 & 1.01 & $\slash$ & $\slash$ & $\slash$\\
    \small4.3 Hyperbolic &0.01 & (0.06, 0.09) & 1.0005 & 0.99 & 1.01 & $\slash$ & $\slash$ & $\slash$\\
    \small4.3 Truncation &0.01 & (0.06, 0.09) & 1.0005 & 0.99 & 1.01 & $\slash$ & $\slash$ & $\slash$\\
    \small4.4 Hyperbolic &0.01 & $\slash$ & $\slash$ & 0.99 & 1.01 & 5 & 1.2 & 1.1\\
    \small4.4 Truncation &0.01 & $\slash$ & $\slash$ & 0.99 & 1.01 & 5 & 1.2 & 1.1\\
    \small4.5 Hyperbolic &0.01 & $\slash$ & $\slash$ & 0.99 & 1.01 & 5 & 1.2 & 1.2\\
    \small4.5 Truncation &0.01 & $\slash$ & $\slash$ & 0.99 & 1.01 & 5 & 1.2 & 1.2\\
    \bottomrule
    \bottomrule
    \end{tabular}
\end{table}
\newpage
\bibliographystyle{elsarticle-num}
\bibliography{main}
\end{document}